\definecolor{limegreen}{rgb}{0.196,0.804,0.196}
\definecolor{darkgreen}{rgb}{0.0,0.5,0.0}
\definecolor{darkbluegreen}{rgb}{0,0.3,0.6}
\definecolor{badgerred}{rgb}{0.715,0.004,0.004}
\newtheorem{theorem}{Theorem}[section]
\newtheorem{corollary}[theorem]{Corollary}
\newtheorem{definition}[theorem]{Definition}
\newtheorem{lemma}[theorem]{Lemma}
\newtheorem{proposition}[theorem]{Proposition}
\numberwithin{equation}{section}
\begin{document}
\title[Positive solutions and applications]{Positive solutions to
Schr\"odinger equations and geometric applications}
\author{Ovidiu Munteanu, Felix Schulze and Jiaping Wang}

\begin{abstract}
A variant of Li-Tam theory, which associates to each end of a complete
Riemannian manifold a positive solution of a given Schr\"odinger equation on
the manifold, is developed. It is demonstrated that such positive solutions
must be of polynomial growth of fixed order under a suitable scaling
invariant Sobolev inequality. Consequently, a finiteness result for the number of ends
follows. In the case when the Sobolev inequality is of particular type, the finiteness result 
is proven directly. As an application, an estimate on the number of ends for shrinking 
gradient Ricci solitons and submanifolds of Euclidean space is obtained. 
\end{abstract}

\address{Department of Mathematics, University of Connecticut, Storrs, CT
06268, USA}
\email{ovidiu.munteanu@uconn.edu}
\address{Department of Mathematics, University of Warwick, Coventry CV7 4AL,
UK}
\email{felix.schulze@warwick.ac.uk}
\address{School of Mathematics, University of Minnesota, Minneapolis, MN
55455, USA}
\email{jiaping@math.umn.edu}
\maketitle

\section{Introduction}

Recall that a complete manifold $\left( M,g\right) $ is a gradient shrinking
Ricci soliton if there exists a function $f$ on $M$ such that the Ricci
curvature of $M$ and the hessian of $f$ satisfy the equation 
\begin{equation*}
\mathrm{Ric}+\mathrm{Hess}(f)=\frac{1}{2}\,g.
\end{equation*}%
As self-similar solutions to the Ricci flow, gradient shrinking Ricci
solitons arise naturally from singularity analysis of the Ricci flow.
Indeed, according to \cite{Se, N, EMT, CZh}, the blow-ups around a type-I
singularity point always converge to nontrivial gradient shrinking Ricci
solitons. It is thus a central issue in the study of the Ricci flow to
understand and classify gradient shrinking Ricci solitons. While the issue
has been successfully resolved for dimension $2$ and $3$ (see \cite{H, P, N,
NW, CCZ}), it remains open for dimension $4,$ though recent work \cite{MW1,
MW2, CDS} has shed some light on it. Presently, there is very limited
information available concerning general gradient shrinking Ricci solitons
in higher dimensions.

The potential $f$ and the scalar curvature $S$ are related through the
following equation \cite{H} 
\begin{equation}
\left\vert \nabla f\right\vert ^{2}+S=f  \label{fsintr}
\end{equation}%
with $f$ normalized by adding a suitable constant. 
By \cite{Ch}, $S>0$ unless $(M,g)$ is the Euclidean space. Moreover,
according to \cite{CZ, HM}, there exists a point $p\in M$ and constants 
$c_1\left( n\right),$  $c_2\left( n\right)$ depending only on the dimension $n$ of $M$ such that 
\begin{equation}
\frac{1}{4}r^{2}(x)-c_1(n)r(x)-c_2(n)\leq f(x)\leq \frac{1}{4}r^{2}(x)+c_1(n)r(x)+c_2(n)
\label{fintr}
\end{equation}%
for all $x\in M,$ where $r(x)=d(p,x)$ is the distance from $p$ to $x$, and
the volume $\mathrm{V}_{p}(r)$ of the geodesic ball $B_{p}(r)$  
centered at $p$ of radius $r$ satisfies 
\begin{equation}
\mathrm{V}_{p}(r) \leq c(n)\, r^{n}. \label{vintr}
\end{equation}
Perelman's entropy is given by 
\begin{equation}
\mu(g) =\ln \left( \frac{1}{\left( 4\pi \right) ^{\frac{n}{2}}}%
\int_{M}e^{-f}\right).  \label{muintr}
\end{equation}%
Set 
\begin{equation}
\alpha =\limsup_{R\rightarrow \infty } \frac{1}{\mathrm{V}_{p}(R) }
\int_{B_{p}\left( R\right)}\left( S\,r^2\right) ^{\frac{n-1}{2}}.
\label{alintr}
\end{equation}

We have the following result.

\begin{theorem}
\label{T1intr} Let $\left( M,g\right) $ be a gradient shrinking Ricci
soliton with $\alpha<\infty.$ Then the number of ends of $M$ is bounded
from above by $\Gamma( n,\alpha ,\mu (g)),$ a constant depending only on
dimension $n,$ $\mu(g) $ and $\alpha.$ 
\end{theorem}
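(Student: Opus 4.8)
The plan is to deduce Theorem~\ref{T1intr} from the general machinery of this paper: the Li--Tam type correspondence attaching to each end of a complete manifold a bounded positive solution of a prescribed Schr\"odinger equation, together with the resulting finiteness theorem for ends under a scaling invariant Sobolev inequality and an averaged bound on the potential. The Schr\"odinger operator to use on $(M,g)$ is the one built from the scalar curvature,
\[
\mathcal{L}u=\Delta u-\beta\,S\,u,
\]
where $\beta=\beta(n)>0$ is the dimensional constant appearing in the Sobolev inequality recalled below. If $(M,g)$ is flat then it is $\mathbb{R}^n$ (for $n\ge 2$), which has a single end and there is nothing to prove; otherwise $S>0$ by \cite{Ch}, so the potential $\beta S$ is positive, the operator $-\mathcal{L}=-\Delta+\beta S$ is nonnegative and, as we check below, subcritical, and the Li--Tam construction applies: it attaches to each end of $M$ a bounded positive solution of $\mathcal{L}u=0$, and distinct ends give rise to linearly independent such solutions. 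Hence, once each of them is shown to have polynomial growth of some fixed order $d_0=d_0(n)$, the number of ends will not exceed the dimension of the space of solutions of $\mathcal{L}u=0$ of polynomial growth at most $d_0$.

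It then suffices to verify on the shrinking soliton the two quantitative hypotheses of the finiteness theorem. The first is the scaling invariant Sobolev inequality. Since $f\ge \tfrac14 r^2-c_1(n)\,r-c_2(n)$ by \eqref{fintr} and $\mathrm{V}_p(r)\le c(n)\,r^n$ by \eqref{vintr}, the weight $e^{-f}$ is integrable and Perelman's entropy $\mu(g)$ in \eqref{muintr} is finite; the sharp logarithmic Sobolev inequality on gradient shrinking Ricci solitons (Carrillo--Ni) then yields a Sobolev inequality of the form
\[
\left(\int_M |u|^{\frac{2n}{n-2}}\,dv\right)^{\frac{n-2}{n}}\ \le\ A\int_M\bigl(|\nabla u|^2+\beta\,S\,u^2\bigr)\,dv,\qquad u\in C_c^\infty(M),
\]
with $A=A(n,\mu(g))$ depending only on $n$ and $\mu(g)$. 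This is exactly a scaling invariant Sobolev inequality adapted to $\mathcal{L}$, and since $S>0$ it also shows that $-\mathcal{L}$ is subcritical. The second hypothesis is the averaged bound on the potential: for $\mathcal{L}=\Delta-\beta S$ the scale invariant quantity governing the Moser iteration is
\[
\limsup_{R\to\infty}\frac{1}{\mathrm{V}_p(R)}\int_{B_p(R)}\bigl(\beta\,S\,r^2\bigr)^{\frac{n-1}{2}}\,dv\ =\ \beta^{\frac{n-1}{2}}\,\alpha\ <\ \infty
\]
by the definition \eqref{alintr} of $\alpha$; the $\limsup$ gives the bound only for $R$ large, which is all the asymptotic counting argument requires. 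Combined with the Euclidean volume bound \eqref{vintr}, these two facts force every positive solution of $\mathcal{L}u=0$ produced above to have polynomial growth of a fixed order $d_0=d_0(n)$.

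Finally, by the general theorem the space of solutions of $\mathcal{L}u=0$ of polynomial growth at most $d_0$ is finite dimensional, with dimension bounded by a constant $\Gamma$ depending only on $n$, on the Sobolev constant $A=A(n,\mu(g))$, and on $\beta^{(n-1)/2}\alpha$ --- hence only on $n$, $\mu(g)$ and $\alpha$. Since this dimension bounds the number of ends of $M$, Theorem~\ref{T1intr} follows. I expect the one genuinely nontrivial point in this reduction to be the Sobolev inequality above: importing it in precisely the scaling invariant form, with constant depending only on $n$ and $\mu(g)$ and with the coefficient of $S$ matched to the potential of $\mathcal{L}$. The truly hard analysis --- the Moser iteration producing polynomial growth from only an $L^{(n-1)/2}$-type averaged control on the potential --- is carried out once and for all in the general theorem, and is here only invoked.
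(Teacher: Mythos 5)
Your proposal routes the proof through the Li--Tam variant and the growth/dimension machinery of Sections~3--6, whereas the paper proves Theorem~\ref{T1intr} by an entirely different, more elementary argument: it deduces it from Theorem~\ref{T2intr}, which is established directly in Section~\ref{ToppingEnds} by a Topping-style covering argument on geodesics, with no Schr\"odinger equation in sight. So the two approaches are genuinely distinct. The paper does mention the route you take, but only as a remark at the end of the introduction, and only claims it recovers Theorem~\ref{T1intr} for \emph{asymptotically conical} shrinkers.

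The reason the paper does not use your route to prove the general statement is a real gap in the proposal. For the Li--Wang Sobolev inequality the exponent is $\mu=\frac{n}{n-2}$, hence $\nu=\frac{n}{2}$ and $\nu-\frac12=\frac{n-1}{2}$. The exponent $\frac{n-1}{2}$ appearing in the definition of $\alpha$ in Theorem~\ref{T1intr} is therefore exactly the \emph{critical} exponent $q=\nu-\frac12$ in Theorem~\ref{Growthintr} and Theorem~\ref{dimpreintr}. In the critical case those theorems do \emph{not} give a bound for arbitrary finite $\alpha$: they require the smallness assumption $\alpha\le\alpha_0(m,A_0,\nu)$. Your proposal asserts polynomial growth of fixed order and a dimension bound from $\alpha<\infty$ alone, which the machinery of this paper simply does not deliver at the critical exponent. (Taking $q>\frac{n-1}{2}$ instead would require finiteness of a \emph{higher}-exponent integral of $S$, a stronger hypothesis than in Theorem~\ref{T1intr}.) The paper's workaround, for asymptotically conical solitons only, is the pigeonhole observation that when the number of ends is large at least half of them have small $\alpha_E$, so Proposition~\ref{BFLocintr} can be applied in the critical case end-by-end; you would need to supply an analogous observation, and it is not clear it holds under the bare hypothesis $\alpha<\infty$.

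There are two further points left unaddressed. First, Theorem~\ref{dimpreintr} has the hypothesis that $\sigma$ decays quadratically \emph{pointwise}, i.e.\ $\sup_M(\rho^2\sigma)<\infty$, which for $\sigma=\beta S$ amounts to a pointwise bound on $r^2S$; this is neither assumed in Theorem~\ref{T1intr} nor implied by $\alpha<\infty$. Second, the entire Schr\"odinger-side machinery is phrased with respect to a proper exhaustion function $\rho$ satisfying \eqref{L}; on a shrinker one would take $\rho\sim 2\sqrt{f}$, but verifying $|\nabla\rho|\ge\frac12$ there uses $S/f\le\frac34$ outside a compact set, which again needs justification. None of these obstacles arise in the paper's actual Section~\ref{ToppingEnds} argument, which is why that is the route the paper follows.
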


A gradient shrinking Ricci soliton $M$ is called asymptotically conical if
there exists a closed Riemannian manifold $(\Sigma, g_{\Sigma })$ and
diffeomorphism 
\begin{equation*}
\Phi: (R,\infty )\times \Sigma \rightarrow M\setminus \Omega
\end{equation*}%
such that $\lambda ^{-2}\,\rho _{\lambda }^{\ast }\,\Phi ^{\ast }\,g$
converges in $C_{loc}^{\infty }$ as $\lambda \rightarrow \infty $ to the
cone metric $dr^{2}+r^{2}\,g_{\Sigma }$ on $[R,\infty )\times \Sigma,$ where 
$\Omega$ is a compact smooth domain of $M.$ Clearly, an asymptotically
conical shrinking Ricci soliton must satisfy $\alpha<\infty.$ 

Recall that an end of a complete manifold $M$ with respect to a compact
smooth domain $\Omega\subset M$ is simply an unbounded component of 
$M\setminus \Omega.$ The number of ends $e(M)$ of $M$ is the maximal number
obtained over all such $\Omega.$ The novelty of Theorem \ref{T1intr} is that
only the scalar curvature integral information at infinity is needed.
Another feature is that the exponent of $S$ in the definition of $\alpha$ is 
$\frac{n-1}{2},$ not the commonly seen $\frac{n}{2}$ in analysis. We
emphasize that the estimate here is explicit. That $M$ has 
finitely many ends follows readily by assuming the scalar curvature of $M$ is bounded. Indeed, as
observed in \cite{FMZ}, (\ref{fsintr}) and (\ref{fintr}) imply that $%
\left\vert \nabla f\right\vert\geq 1$ outside a compact subset of $M$ and hence $M$
must have finite topological type. We mention here that in \cite{MW} it was 
shown that any complete shrinking K\"{a}hler Ricci soliton must have one end. 
The proof uses Li-Tam's theory and a fact special to the K\"ahler situation that the gradient vector 
$\nabla f$ is real holomorphic.

For shrinking gradient Ricci solitons of dimension $n\geq 3,$ by Li-Wang \cite{LW},
the following Sobolev inequality holds.
\begin{equation*}
\left( \int_{M}\phi ^{\frac{2n}{n-2}}\right) ^{\frac{n-2}{n}}\leq 
C(n)\,e^{-\frac{2\mu \left( g\right) }{n}}\int_{M}\left( \left\vert \nabla \phi
\right\vert ^{2}+S\phi ^{2}\right) 
\end{equation*}%
for $\phi \in C_{0}^{\infty }(M).$ So Theorem \ref{T1intr} is a consequence
of the following general result.

\begin{theorem}\label{T2intr}
Let $\left( M,g\right) $ be a complete Riemannian manifold of dimension $n\geq 3$
satisfying the Sobolev inequality 
\begin{equation*}
\left( \int_{M}\phi ^{\frac{2n}{n-2}}\right) ^{\frac{n-2}{n}}\leq A
\int_{M}\left(\left\vert \nabla \phi \right\vert ^{2}+\sigma \phi
^{2}\right)
\end{equation*}%
for any $\phi \in C_{0}^{\infty }\left( M\right),$ where $A>0$ is a
constant and $\sigma \geq 0$ a continuous function. Suppose

\begin{equation*}
\alpha =\limsup_{R\rightarrow \infty }\frac{1}{\mathrm{V}_{p}\left(R\right) }\int_{B_p\left(R\right) }\left( r^{2}\sigma \right) ^{\frac{n-1}{2}}<\infty 
\end{equation*}
and
\begin{equation*}
V_{\infty }=\limsup_{R\rightarrow \infty }\frac{\mathrm{V}_p\left(R\right) }{R^{n}}<\infty . 
\end{equation*}
Then the number of ends of $M$ is bounded above by a constant $\Gamma$
depending only on $n,$ $A,$ $\alpha$ and $V_{\infty }.$
\end{theorem}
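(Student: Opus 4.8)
The strategy is to identify the ends of $M$ with a linearly independent family of positive solutions of the Schr\"odinger equation $\Delta u=\sigma u$, and then to bound the size of any such family by estimating the growth of the solutions and counting them.

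Fix a smooth compact domain $\Omega=B_p(r_0)$ with respect to which $M$ has ends $E_1,\dots,E_k$; it suffices to bound $k$ by a constant depending only on $n,A,\alpha,V_\infty$. For each $i$, construct a positive solution $u_i$ of $\Delta u_i=\sigma u_i$ on $M\setminus\overline\Omega$ by a Li--Tam-type exhaustion procedure: on an exhaustion $\Omega\subset D_1\subset D_2\subset\cdots$ of $M$, solve $\Delta u_i^{(j)}=\sigma u_i^{(j)}$ on $D_j\setminus\overline\Omega$ with $u_i^{(j)}=1$ on $\partial\Omega\cap\overline{E_i}$, $u_i^{(j)}=0$ on the remaining part of $\partial\Omega$, and $u_i^{(j)}=0$ on $\partial D_j$. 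Since $\sigma\ge0$, the maximum principle gives $0\le u_i^{(j)}\le1$ and monotonicity in $j$, so $u_i:=\lim_j u_i^{(j)}$ solves $\Delta u_i=\sigma u_i$, satisfies $0\le u_i\le1$, equals $1$ on $\partial\Omega\cap\overline{E_i}$ and $0$ on $\partial\Omega\setminus\overline{E_i}$, and is strictly positive in the interior by the Harnack inequality. Restricting a relation $\sum_i c_i u_i\equiv0$ to each boundary piece $\partial\Omega\cap\overline{E_j}$ forces $c_j=0$, so the $u_i$ are linearly independent; hence $k\le\dim\mathcal S$, where $\mathcal S$ is the space of bounded solutions of $\Delta u=\sigma u$ on $M\setminus\overline\Omega$.

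The main step is to show that every $u\in\mathcal S$ has polynomial growth of a fixed order $d=d(n,A,\alpha,V_\infty)$. Since $\sigma\ge0$ and $\Delta u=\sigma u$, the function $|u|$ satisfies $\Delta|u|\ge\sigma|u|\ge0$ and is therefore subharmonic; feeding this into a Moser iteration based on the Sobolev inequality yields a mean value inequality $\sup_{B_q(\rho)}|u|\le C\big(\rho^{-n}\int_{B_q(2\rho)}u^2\big)^{1/2}$ on any ball on which the dimensionless potential $\rho^2\sigma$ is under control. Running this on the dyadic annuli $B_p(2^{j+1})\setminus B_p(2^j)$, where $r\asymp2^j$ so that $(r^2\sigma)^{(n-1)/2}$ is exactly the scale-invariant density whose integral the hypothesis bounds by a multiple of the local volume, and using $V_\infty<\infty$ to convert an $L^2$-growth bound into a pointwise one of the same order, one obtains $\sup_{B_p(R)}|u|\le C(n,A,\alpha,V_\infty)\,R^d$. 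The delicate point — the step I expect to be the main obstacle — is that the exponent appearing in $\alpha$ is $\tfrac{n-1}{2}$ rather than the $\tfrac n2$ one would want in order to absorb $\int\sigma\phi^2$ into $\int|\nabla\phi|^2$ by H\"older on a single ball; the missing integrability has to be recovered from the weight $r^2$ and from iterating dyadically rather than working at a single scale.

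Finally, $\mathcal S$ sits inside the space of solutions of $\Delta u=\sigma u$ on $M\setminus\overline\Omega$ of polynomial growth of order at most $d$, and a dimension-counting argument in the spirit of Colding--Minicozzi bounds this dimension: the mean value inequality of the previous step, together with the volume upper bound $V_\infty$, controls the relevant combinatorics and gives $\dim\mathcal S\le\Gamma(n,A,\alpha,V_\infty)$. Because the Sobolev inequality here is of the standard type with exponent $\tfrac{2n}{n-2}$, this last step can also be carried out directly: the inequality is equivalent to an on-diagonal bound $p_t\le c(n,A)\,t^{-n/2}$ for the heat semigroup of $-\Delta+\sigma$, hence to a Green's function estimate $G(x,y)\le c(n,A)\,d(x,y)^{2-n}$, which one may use in place of the general machinery to separate the ends and to produce an explicit value of $\Gamma$. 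In either case $k\le\dim\mathcal S\le\Gamma(n,A,\alpha,V_\infty)$, and since $\Omega$ was arbitrary this bounds the number of ends of $M$.
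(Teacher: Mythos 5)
Your proposal takes a genuinely different route from the paper's proof of Theorem~\ref{T2intr}.  The paper does \emph{not} prove this theorem via Schr\"odinger solutions, Li--Tam theory, or dimension counting.  Its Section~\ref{ToppingEnds} proof is a self-contained Topping-style covering argument that never constructs auxiliary functions: assuming $k$ ends, at least half of them carry a small amount of $\int \sigma^{\frac{n-1}{2}}$ in the annulus $B_p(3R)\setminus B_p(R)$, one of those has small volume $\mathrm{V}_z(R)$ at a boundary point $z\in\partial E(2R)$, and then along a geodesic from $p$ to $z$ the Sobolev inequality is applied only through cutoff functions to deduce, by an iteration of volume-lower-bound estimates, that on a definite fraction of the geodesic there must be balls on which $\int_{B_x(r_x)}\sigma^{\frac{n-1}{2}}\gtrsim r_x$.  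Summing these up over a Vitali-type cover of the geodesic segment yields $\int_{B_z(R)}\sigma^{\frac{n-1}{2}}\gtrsim R$, contradicting the upper bound from $\alpha<\infty$ once $k$ is too large.  The exponent $\frac{n-1}{2}$ is exactly what this one-dimensional covering argument demands; no mean value inequality is ever invoked, so the mismatch between $\frac{n-1}{2}$ and $\frac n2$ that you flag simply does not arise.

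Your route corresponds to the paper's \emph{other}, more general machinery (Sections~\ref{Ends}--\ref{Sob}), which is designed for Sobolev inequalities with arbitrary exponent $\mu>1$.  But that machinery rests on hypotheses not present in Theorem~\ref{T2intr}: a proper Lipschitz function $\rho$ with $\frac12\le|\nabla\rho|\le1$ and $\Delta\rho\le m/\rho$ (providing volume doubling, polynomial volume growth, and the areas/volumes in Lemma~\ref{Area}), a \emph{local averaged} Sobolev inequality on sublevel sets of $\rho$ (from which the mean value property is derived by Moser iteration), and a \emph{pointwise} quadratic decay bound $\sigma\le\Upsilon\rho^{-2}$ (Proposition~\ref{G}) in order to show a priori polynomial growth.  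None of these are consequences of the global Sobolev inequality together with $\alpha<\infty$ and $V_\infty<\infty$ alone.  In particular, both the mean value inequality you want to run on dyadic annuli and the Colding--Minicozzi dimension count implicitly use volume doubling at all scales, which you have not established.  You correctly identify the $\frac{n-1}{2}$ versus $\frac n2$ mismatch as the crux, but you do not resolve it, and the concluding sketch via Green's function bounds for $-\Delta+\sigma$ is one sentence without an argument for why it would bound the number of ends.  So while your framework is the one the paper develops for the general-exponent case, as an argument for Theorem~\ref{T2intr} under its stated hypotheses the proposal has a genuine gap precisely at the step you anticipated, and the paper's actual proof circumvents it by an entirely different mechanism.
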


The well known Michael-Simon inequality \cite{A, MS} for submanifolds in the
Euclidean space $\mathbb{R}^N$ states that
\begin{equation}
\left( \int_{M}\left\vert \phi \right\vert^{\frac{n}{n-1}}\right) ^{\frac{n-1}{n}}\leq
C(n)\int_{M}\left( \left\vert \nabla \phi \right\vert +\left\vert
H\right\vert \left\vert \phi \right\vert\right)   \label{MSintr}
\end{equation}%
for any $\phi \in C_{0}^{\infty }(M),$ where $H$ is the mean curvature vector of $M.$
In fact, this inequality holds for submanifolds in Cartan-Hadamard
manifolds as well \cite{HS}. These inequalities are particularly useful in
studying minimal submanifolds. We refer to \cite{Ca, Ni, CSZ, PRS} and the references 
therein for some of the results. It is easy to see that 
\begin{equation*}
\left( \int_{M}\phi ^{\frac{2n}{n-2}}\right) ^{\frac{n-2}{n}}\leq
C(n)\int_{M}\left( \left\vert \nabla \phi \right\vert^2 +\left\vert
H\right\vert^2 \phi ^2\right)
\end{equation*}%
holds for $n\geq 3.$ As a corollary of Theorem \ref{T2intr}, we have the following result.

\begin{corollary}\label{T3intr}
Let $M^n$ be a complete submanifold of $\mathbb{R}^N$ with $n\geq 2.$
Suppose

\begin{equation*}
\alpha =\limsup_{R\rightarrow \infty }\frac{1}{\mathrm{V}_{p}\left(R\right) }\int_{B_p\left(R\right) } \left(r\,\left\vert H\right\vert\right)^{n-1}<\infty 
\end{equation*}
and
\begin{equation*}
V_{\infty }=\limsup_{R\rightarrow \infty }\frac{\mathrm{V}_p\left(R\right) }{R^{n}}<\infty. 
\end{equation*}
Then the number of ends of $M$ is bounded above by a constant $\Gamma$
depending only on the dimension $n,$ $\alpha$ and $V_{\infty }.$
\end{corollary}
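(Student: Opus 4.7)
The plan is to deduce Corollary \ref{T3intr} from Theorem \ref{T2intr} by taking $\sigma=\left\vert H\right\vert^{2}$ as the Schr\"odinger potential. With this choice the integrand in the definition of $\alpha$ becomes
\[
\bigl(r^{2}\sigma\bigr)^{\frac{n-1}{2}}=\bigl(r\left\vert H\right\vert\bigr)^{n-1},
\]
so the hypothesis on $\alpha$ in the corollary matches the hypothesis on $\alpha$ in Theorem \ref{T2intr} exactly; the volume hypothesis $V_{\infty}<\infty$ is identical in the two statements.

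Next I would verify that this choice of $\sigma$ puts the Sobolev inequality required by Theorem \ref{T2intr} at our disposal. The $L^{2}$-form
\[
\left(\int_{M}\phi^{\frac{2n}{n-2}}\right)^{\frac{n-2}{n}}\leq C(n)\int_{M}\bigl(\left\vert\nabla\phi\right\vert^{2}+\left\vert H\right\vert^{2}\phi^{2}\bigr),
\]
already recorded in the excerpt as a consequence of \eqref{MSintr}, is exactly the Sobolev inequality of Theorem \ref{T2intr} with constant $A=C(n)$ depending only on $n$. It is obtained by applying \eqref{MSintr} to $\phi^{\frac{2(n-1)}{n-2}}$ and using Cauchy-Schwarz to isolate a factor of $\left(\int\phi^{\frac{2n}{n-2}}\right)^{1/2}$, which is then absorbed into the left-hand side. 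For $n\geq 3$ this supplies all the hypotheses of Theorem \ref{T2intr}, which immediately yields the desired bound $\Gamma(n,\alpha,V_{\infty})$ on the number of ends.

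The main obstacle is the borderline case $n=2$, where Theorem \ref{T2intr} does not apply: the Sobolev exponent $\frac{2n}{n-2}$ degenerates, and the natural Sobolev input reverts to the original $L^{n/(n-1)}=L^{2}$ form of Michael-Simon \eqref{MSintr}. I would handle this case by redoing the positive-solution construction and the end-counting step from the proof of Theorem \ref{T2intr} using this $L^{2}$ Sobolev inequality in place of the $L^{\frac{2n}{n-2}}$ one. The hypothesis $\limsup\frac{1}{\mathrm{V}_{p}(R)}\int_{B_{p}(R)}r\left\vert H\right\vert<\infty$ should then play precisely the role that the $\alpha$-hypothesis plays in higher dimensions, feeding into the analogous Moser-type iteration to yield a bound depending only on $\alpha$ and $V_{\infty}$.
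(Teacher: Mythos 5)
Your treatment of the case $n\geq 3$ is correct and coincides with what the paper does: derive the $L^2$-Sobolev inequality with potential $|H|^2$ from \eqref{MSintr}, observe that $(r^2\sigma)^{\frac{n-1}{2}}=(r|H|)^{n-1}$, and invoke Theorem~\ref{T2intr}. Your derivation of the $L^2$ Sobolev inequality from the $L^1$ Michael--Simon inequality is also a correct standard step.

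Your handling of $n=2$, however, misidentifies the proof technique. You write that you would ``redo the positive-solution construction and the end-counting step from the proof of Theorem~\ref{T2intr}'' via ``Moser-type iteration.'' But the proof of Theorem~\ref{T2intr} given in Section~\ref{ToppingEnds} is \emph{not} a positive-solution / Moser iteration argument; it is a direct Topping-style geometric covering argument (selecting ends with small $\sigma$-mass and small volume, then deriving a contradiction along a minimizing geodesic). The positive-solution and Moser iteration machinery belongs to the other, separate route in the paper (Sections~\ref{Ends}--\ref{Sob}), which is designed to handle Sobolev inequalities with a general exponent $\mu>1$. So your proposed fix for $n=2$ is not a ``slight modification'' of the Theorem~\ref{T2intr} proof; it is an entirely different (and substantially longer) argument, and you have not indicated how its hypotheses (the existence of a proper function $\rho$ satisfying~\eqref{L}, the mean value property, quadratic decay of $\sigma$) would be verified for a general complete submanifold.

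The actual resolution in the paper is considerably simpler: in Section~\ref{ToppingEnds} the authors prove Theorem~\ref{T2intr} under the more general Sobolev inequality~\eqref{l1} with exponent $\frac{qn}{n-q}$ for any $1\leq q\leq n-1$, and $\alpha$ defined by~\eqref{l2} with integrand $(r^q\sigma)^{\frac{n-1}{q}}$. The covering argument goes through verbatim. Taking $q=1$ and $\sigma=|H|$ makes this exactly the Michael--Simon inequality~\eqref{MSintr}, valid for all $n\geq 2$, and the resulting $\alpha=\limsup\frac{1}{\mathrm{V}_p(R)}\int_{B_p(R)}(r|H|)^{n-1}$ matches the corollary's statement directly, with no need to first pass to the $L^2$ form and no case split on $n$. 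You should replace your $n=2$ paragraph with this observation.
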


Strictly speaking, for the case of dimension $n=2,$ the conclusion does not follow directly from Theorem \ref{T2intr}. Rather,
it follows by a slight modification of its proof. 
Our proof of Theorem \ref{T2intr} is very much motivated by the work of Topping \cite{T1,T2}, 
where the diameter of a compact manifold $M$ satisfying the Sobolev inequality is estimated in terms of the constant $A$
together with the integral $\int_M \sigma^{\frac{n-1}{2}}.$ The argument there is adapted to show that for each large $R,$ 
the volume of $E\cap B_p(R)$ satisfies $\mathrm{V}\left(E\cap B_p(R)\right)\geq c\,R^n$ for some constant $c$ for
at least one half of the ends $E$ of $M.$ Note that for different $R$ the choice of such set of ends $E$ may be different. Nonetheless,
the desired estimate
of the number of ends follows as the total volume of the ball $B_p(R)$ is at most of $2V_{\infty}\,R^n.$
We emphasize that the argument strongly depends on the fact that the Sobolev exponent is of $\frac{n}{n-2}$ 
with $n$ being the dimension of the manifold.
For a Sobolev inequality with general exponent $\mu>1$ of the form
\begin{equation*}
\left( \int_M \phi ^{2\mu }\right) ^{\frac{1}{\mu }}\leq A\,\int_{M }\left( \left\vert \nabla \phi \right\vert
^{2}+\sigma \phi ^{2}\right) 
\end{equation*}%
for $\phi \in C_{0}^{\infty }\left( M\right),$
we instead develop a different approach of using positive solutions to a Schr\"odinger equation 
to estimate the number of ends.

More specifically, the approach relies on a variant of Li-Tam theory. In 
\cite{LT}, to each end $E$ of $M,$ they associate a harmonic function $f_E$
on $M.$ The resulting harmonic functions are linearly independent. So the
question of bounding the number of ends $e(M)$ is reduced to estimating the
dimension of the space spanned by those functions. The theory was
successfully applied to show that $e(M)$ is necessarily finite when the
Ricci curvature of $M$ is nonnegative outside a compact set. We shall refer
to \cite{Li} for more applications of this theory. Here, we develop a
variant of their theory by considering instead the Schr\"odinger operator
\[
L=\Delta-\sigma
\] 
with $\sigma$ being a nonnegative but not identically zero
smooth function on $M.$

\begin{theorem}
\label{Eintr} Let $\left( M,g\right) $ be a complete manifold and $%
E_{1},E_{2},\cdots ,E_{l}$ the ends of $M$ with respect to a geodesic ball $%
B_{p}(r_{0})$ of $M$ with $l\geq 2.$ Then for each end $E_{i},$ there exists
a positive solution $u_{i}$ to the equation $\Delta u_{i}=\sigma u_{i}$ on $%
M $ satisfying $0<u_{i}\leq 1$ on $M\backslash E_{i}$ and 
\begin{equation*}
\sup_{M}u_{i}=\limsup_{x\rightarrow E_{i}(\infty )}u_{i}(x)>1.
\end{equation*}%
Moreover, the functions $u_{1},\cdots ,u_{l}$ are linearly independent.
\end{theorem}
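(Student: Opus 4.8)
The plan is to adapt the classical Li–Tam construction to the Schr\"odinger operator $L=\Delta-\sigma$. First I would fix the compact domain $\Omega=B_p(r_0)$ and work on the exhaustion by $B_p(R_j)$ with $R_j\to\infty$. On each $B_p(R_j)$, for the fixed end $E_i$, I would solve the boundary value problem $Lv=0$ on $B_p(R_j)$ with $v=1$ on the portion of $\partial B_p(R_j)$ lying in $E_i$ and $v=0$ on the rest of $\partial B_p(R_j)$. Since $\sigma\geq 0$, the operator $L$ satisfies the maximum principle, so $0<v<1$ in the interior and the solution is unique. The main technical point is to extract a convergent subsequence as $R_j\to\infty$: this needs a Harnack-type estimate or a uniform interior bound for solutions of $Lv=0$, which follows from local elliptic theory applied on fixed compact sets (the solutions are uniformly bounded by $1$, so interior Schauder/De Giorgi–Nash–Moser estimates give local $C^{1,\alpha}$ bounds and hence a subsequential limit $u_i$ solving $\Delta u_i=\sigma u_i$ on all of $M$).

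The next step is to verify the claimed normalization properties of $u_i$. By construction each approximating solution is $\leq 1$ everywhere and vanishes on $\partial B_p(R_j)\setminus E_i$, so in the limit $0\leq u_i\leq 1$ on $M$, and $0<u_i\leq 1$ on $M\setminus E_i$ by the strong maximum principle (note $u_i$ cannot be identically zero because $\sigma\not\equiv 0$ forces the barrier on $E_i$ to push the solution strictly positive — this is where $\sigma\not\equiv 0$ is used). To get $\sup_M u_i>1$, which is the crux, I would argue by contradiction: if $u_i\leq 1$ everywhere, then $1-u_i\geq 0$ is a supersolution-type object; more precisely, consider that on $E_i$ the function $u_i$ has boundary value (in the limiting sense) tending to something at $E_i(\infty)$. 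The standard Li–Tam trick is that if the solution stayed $\leq 1$ one could use it to build a bounded $L$-harmonic function that is $1$ at the end $E_i$ and $<1$ on the other ends, contradicting a Phragm\'en–Lindel\"of / maximum principle argument tied to the fact that there are at least two ends. Concretely, I expect the needed input is a lemma (of the type in Li's book, adapted to $L$) stating that a bounded solution of $Lu=0$ which is $0$ on one end cannot achieve its supremum in the interior unless it is identically of that value; combined with $l\geq 2$ this forces $\limsup_{x\to E_i(\infty)}u_i>1$ and equals $\sup_M u_i$.

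Finally, for linear independence of $u_1,\dots,u_l$: suppose $\sum_i a_i u_i\equiv 0$. Relabel so that $a_1=\max_i a_i>0$ (if all coefficients are $\leq 0$ multiply by $-1$; if the max is $0$ a separate easy case). Evaluate the behavior along the end $E_1(\infty)$: there $u_1\to\sup_M u_1>1$ while each $u_j$ for $j\neq 1$ satisfies $u_j\leq 1$ on $M\setminus E_j\supseteq E_1$, so $u_j\leq 1$ on $E_1$. Taking $\limsup$ along $E_1(\infty)$ of $0=\sum a_i u_i$ and separating the $i=1$ term gives $0\geq a_1\sup_M u_1 - (\text{something controlled by }\sum_{j\neq 1}|a_j|\cdot 1)$; with the choice of $a_1$ as the maximum and $\sup_M u_1>1$ strictly, a short arithmetic argument yields a contradiction unless all $a_i$ vanish. (One must be slightly careful to use that $u_1$ genuinely exceeds $1$ in the limit while the competing terms are bounded by $1$; the strict inequality $\sup_M u_i>1$ is exactly what makes this work.)

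The step I expect to be the main obstacle is establishing $\sup_M u_i>1$ — equivalently, that the limiting solution does not degenerate to something bounded by $1$ on all of $M$. This requires a careful boundary-estimate argument on the end $E_i$ for the finite approximations, showing that the boundary value $1$ on $\partial B_p(R_j)\cap E_i$ forces the interior values near $E_i(\infty)$ to exceed $1+\delta$ for some $\delta$ independent of $j$ (using that $\sigma\not\equiv 0$ so that $L$ is a genuine perturbation and there is ``room'' for the solution to overshoot), or alternatively a Phragm\'en–Lindel\"of argument exploiting that $M$ has at least two ends so the solution cannot be everywhere $\leq 1$ without being forced to $0$ on $E_i$ as well, contradicting its boundary value there. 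Isolating and proving this quantitative overshoot is the delicate part; the rest is standard elliptic compactness and bookkeeping.
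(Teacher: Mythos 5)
Your construction omits the normalization step, and this is not a minor omission --- it is the idea that makes the theorem work. You solve the Dirichlet problem with boundary values $0$ and $1$ on $B_p(R_j)$ and take a limit. By the maximum principle every approximating solution satisfies $0<v_{R_j}<1$, so any subsequential limit $u_i$ satisfies $0\le u_i\le 1$ on \emph{all} of $M$. Hence $\sup_M u_i>1$ is simply impossible for the function you construct, and no Phragm\'en--Lindel\"of argument, "quantitative overshoot", or use of $\sigma\not\equiv 0$ can rescue this. (Worse, without normalization the limit can degenerate to $u_i\equiv 0$; your claim that $\sigma\not\equiv 0$ "forces the solution strictly positive" is unjustified.) The paper's resolution is to rescale: set $u_R = C_R\,v_R$ with $C_R = \bigl(\max_{B_p(r_0)} v_R\bigr)^{-1}>1$, so that $\max_{B_p(r_0)} u_R = 1$ for every $R$. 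The Cheng--Yau gradient estimate gives a Harnack inequality on compacts so the normalized family has locally uniform bounds, the limit $u$ exists, and it inherits $\max_{B_p(r_0)} u =1$; since $u$ cannot be constant (here $\sigma\not\equiv 0$ is genuinely used) and $\sup_{\partial E(r)} u$ is strictly increasing in $r$ by the strong maximum principle, $\sup_M u>1$ is reached only at infinity of $E$.

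Your linear independence argument also has a gap. Picking the coefficient $a_1=\max_i a_i>0$ and taking $\limsup$ along $E_1(\infty)$ does \emph{not} yield a contradiction just from $u_1\to S_1>1$ and $u_j\le 1$ on $E_1$; the negative-coefficient terms can easily balance $a_1 u_1$. What is needed, and what the paper proves, is the sharper fact that along a sequence $x_{j,k}\to E_j(\infty)$ with $u_j(x_{j,k})\to S_j$, all the other functions satisfy $u_i(x_{j,k})\to 0$. This is obtained by a comparison argument: one shows $u_i\le C_j\bigl(S_j-u_j\bigr)$ on $E_j$ by applying the maximum principle to $w_{i,R}=u_{i,R}-C_j(S_j-u_j)$ on $F_i(R)\setminus F_i(r_0)$ (this uses again that the $u_{i,R}$ vanish on $\partial F_i(R)$). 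Plugging the limits into $\sum a_i u_i =0$ then gives $a_j S_j=0$ for each $j$, and hence $a_j=0$. You will also want the preliminary reduction handling the case where some $u_j$ is unbounded on $E_j$, which the paper disposes of first.

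In short, the overall strategy (Dirichlet exhaustion, elliptic compactness, maximum principle) is the right one and matches the paper, but the two load-bearing ideas --- the normalization $C_R = (\max_{B_p(r_0)} v_R)^{-1}$ and the barrier estimate $u_i\le C_j(S_j-u_j)$ on $E_j$ --- are both missing, and without them neither $\sup_M u_i>1$ nor linear independence can be established.
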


One nice feature here is that all the functions $u_i$ are positive, while in
the case of harmonic functions $f_E$ is positive if and only if $M$ is
nonparabolic, that is, it admits a positive Green's function. With this
result in hand, we set out to bound the dimension of the space $\mathcal{F}$
spanned by the functions $u_1,\cdots,u_l.$ The work of \cite{CM1, CM2, L} on
the dimension of spaces of harmonic functions with polynomial growth inspires
us to consider the mean value property for positive subsolutions to $L.$
More precisely, assume that $M$ admits a proper Lipschitz function $\rho >0$
satisfying 
\begin{equation}
\frac{1}{2}\leq \left\vert \nabla \rho \right\vert \leq 1\text{ \ and \ }%
\Delta \rho \leq \frac{m}{\rho }\, ,  \label{L}
\end{equation}%
in the weak sense for $\rho \geq R_{0},$ a sufficiently large constant and
some constant $m>0.$

 Denote the sublevel and level sets of $\rho$ by 
\begin{eqnarray*}
D(r) &=&\left\{ x\in M:\rho \left( x\right) <r\right\} \\
\Sigma( r) &=&\left\{ x\in M:\rho \left( x\right) =r\right\}.
\end{eqnarray*}
To simplify notation, we let $\mathrm{V}(r)=\mathrm{Vol}(D(r))$ and  $\mathrm{A}(r)=\mathrm{Area}(\Sigma(r))$.

\begin{definition}
A manifold $\left( M,g\right) $ has the mean value property $\left( \mathcal{%
M}\right) $ if there exist constants $A_{0}>0$ and $\nu >1$ such that for
any $0<\theta \leq 1$ and $R\geq 4R_{0},$ 
\begin{equation}
\sup_{\Sigma \left( R\right) }u\leq \frac{A_{0}}{\theta ^{2\nu }}\frac{1}{%
\mathrm{V}((1+\theta )R)}\int_{D\left( \left( 1+\theta \right) R\right)
\backslash D\left( R_{0}\right) }u  \label{meanintr}
\end{equation}%
holds true for any function $u>0$ satisfying $\Delta u\geq \sigma u$ on $%
D(2R)\backslash D(R_{0}).$
\end{definition}

With this definition at hand, we can now state our main estimate on positive
solutions to the Schr\"odinger equation $L u=0$. For $q\geq 1,$ define the
quantity 
\begin{equation}
\alpha =\limsup_{R\rightarrow \infty }\left( R^{2q}\fint_{\Sigma \left(
R\right) }\sigma ^{q}\right) ^{\frac{1}{q}},  \label{Lq}
\end{equation}
where 
\begin{equation*}
\fint_{\Sigma \left(R\right) }\sigma ^{q}= \frac{1}{\mathrm{A}(R) } \int_{\Sigma \left( R\right) }\sigma
^{q}.
\end{equation*}%
\smallskip

\begin{theorem}
\label{Growthintr}Assume that $\left( M,g\right) $ admits a proper function $%
\rho $ satisfying (\ref{L}) and has the mean value property $\left( \mathcal{%
M}\right) .$ For a polynomially growing positive solution $u$ of $\Delta
u=\sigma u$ on $M\backslash D\left( R_{0}\right) ,$ if $\alpha <\infty $ for
some $q>\nu -\frac{1}{2},$ then there exists a constant $\Gamma \left(
m,A_{0},\nu ,\alpha \right) >0$ such that

\begin{equation*}
u\leq \Lambda \left( \rho ^{\Gamma}+1\right) \text{ \ on } M\backslash
D\left( R_{0}\right),
\end{equation*}%
where $\Lambda>0$ is a constant depending on $u.$ In the critical case $%
q=\nu -\frac{1}{2},$ the same conclusion holds true with $\Gamma=\Gamma(m,
A_0, \nu)$ provided that $\alpha\leq \alpha _{0}( m, A_{0}, \nu),$ a
sufficiently small positive constant.
\end{theorem}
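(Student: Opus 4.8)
The plan is to run a De Giorgi--Nash--Moser type iteration on the level sets $\Sigma(R)$, using the mean value property $(\mathcal{M})$ to control $\sup_{\Sigma(R)} u$ by an $L^1$ average over an annulus and then absorbing the potential $\sigma$ via H\"older's inequality against the quantity $\alpha$ in (\ref{Lq}). Concretely, set $M(R)=\sup_{D(R)} u$ (finite by the polynomial growth hypothesis and elliptic regularity). The core is a doubling-type inequality of the form
\begin{equation*}
M(R)\leq \Bigl(1+\frac{C(m,A_0,\nu)\,\beta(R)}{\theta^{2\nu}}\Bigr)\,M((1+\theta)R),
\end{equation*}
for all $R\geq 4R_0$ and $0<\theta\leq 1$, where $\beta(R)\to 0$ as $R\to\infty$ in the supercritical case $q>\nu-\tfrac12$ and $\beta(R)$ is bounded by a small multiple of $\alpha$ in the critical case. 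First I would derive this: apply $(\mathcal{M})$ to $u$ on $D(2R)\setminus D(R_0)$ to get $\sup_{\Sigma(R)}u\leq \frac{A_0}{\theta^{2\nu}}\frac{1}{\mathrm V((1+\theta)R)}\int_{D((1+\theta)R)\setminus D(R_0)}u$; since $u\geq 0$ and we want to bound $M(R)$ rather than $\sup_{\Sigma(R)}u$, I would instead apply $(\mathcal{M})$ on the shifted region (replacing $R$ by $r\in[R,(1+\theta)R]$ and using the maximum principle, since $\Delta u\geq \sigma u\geq 0$ forces $\sup_{D(r)}u=\sup_{\Sigma(r)}u$) to conclude $M(R)\leq \frac{A_0}{\theta^{2\nu}}\fint_{D((1+\theta)R)\setminus D(R_0)}u$.

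Next I would split that average: $\fint u = \fint (u-M((1+\theta)R)) + \cdots$ is not quite the right move; rather I would write $\int_{D((1+\theta)R)\setminus D(R_0)} u$ and bound $u$ pointwise by $M((1+\theta)R)$ on the part of the annulus near $D(R_0)$, while on the thin annulus $D((1+\theta)R)\setminus D(R)$ one picks up the small factor. The cleaner route, following Li--Colding--Minicozzi, is to iterate: applying the inequality with $\theta=2^{-j}$ and $R$ replaced by $R_j=R\prod_{i<j}(1+2^{-i})$, and taking the product $\prod_j\bigl(1+C\beta(R_j)2^{2\nu j}\bigr)$. Convergence of this product is exactly where the hypothesis $q>\nu-\tfrac12$ enters: from (\ref{Lq}), $\fint_{\Sigma(r)}\sigma^q\leq 2\alpha^q r^{-2q}$ for large $r$, and combining with the co-area formula, the Lipschitz bound $\tfrac12\leq|\nabla\rho|\leq1$, and $\Delta\rho\leq m/\rho$ (which gives $\mathrm A(r)\leq C\,r^{-1}\mathrm V(r)$ plus the reverse doubling needed to compare $\mathrm V((1+\theta)R)$ with $\mathrm V(R)$), one shows $\beta(R_j)\leq C\alpha\, 2^{-j(q-\nu+1/2)/\text{something}}$ up to bookkeeping — the point being the exponent gap $q-(\nu-\tfrac12)>0$ makes $\sum_j \beta(R_j)2^{2\nu j}<\infty$, so the product converges to a finite $\Lambda'$. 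Passing to the limit $j\to\infty$ gives $M(R)\leq \Lambda' \limsup_{r\to\infty} M(r)/r^{?}$... — more precisely, since $u$ grows at most polynomially, iterating bounds $M(R)$ by $\Lambda'\cdot R^{\Gamma}$ for $\Gamma$ extracted from the rate at which the partial products grow, yielding $u\leq\Lambda(\rho^\Gamma+1)$ on $M\setminus D(R_0)$.

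The main obstacle I expect is the bookkeeping that converts the per-scale estimate from $(\mathcal{M})$ (which lives on a single pair of nested sublevel sets) into a convergent infinite product with an explicit exponent $\Gamma$ depending only on $m,A_0,\nu,\alpha$: one must control the volume ratios $\mathrm V((1+\theta)R)/\mathrm V(R)$ from below uniformly, which requires squeezing a volume-doubling statement out of $\Delta\rho\leq m/\rho$ (integrating this along the flow of $\nabla\rho/|\nabla\rho|^2$ gives $\mathrm A(r)$ monotonicity-type control and hence $\mathrm V(r)\geq c\,(r/R)^{?}\,\mathrm V(R)$), and one must carefully track how the $\theta^{-2\nu}$ blow-up competes with the decay of $\beta$. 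In the critical case $q=\nu-\tfrac12$ the exponent gap vanishes, so $\sum_j\beta(R_j)2^{2\nu j}$ no longer converges for free; instead each term is comparable to $C\alpha$, the product becomes $\prod_j(1+C\alpha)$-like along a geometric number of scales doubling $R$, and one needs $\alpha\leq\alpha_0(m,A_0,\nu)$ small enough that $C\alpha<\log 2$ (or the analogous threshold) so that the product over the $O(\log(R/R_0))$ relevant scales grows only polynomially in $R$ with a controlled exponent $\Gamma(m,A_0,\nu)$. I would handle the supercritical and critical cases in parallel, isolating the single summation lemma where the two hypotheses diverge.
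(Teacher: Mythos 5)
There is a genuine gap: your ``core doubling inequality''
\begin{equation*}
M(R)\leq\Bigl(1+\tfrac{C\beta(R)}{\theta^{2\nu}}\Bigr)M((1+\theta)R)
\end{equation*}
is, as written, trivially true (since $M(R)\leq M((1+\theta)R)$ by monotonicity) and hence carries no information; and the reversed inequality $M((1+\theta)R)\leq(1+\epsilon)M(R)$, which is what you actually need, does \emph{not} follow from the mean value property. Indeed $(\mathcal{M})$ bounds $\sup_{\Sigma(R)}u$ by $\theta^{-2\nu}$ times an $L^1$-average of $u$ over the \emph{larger} region $D((1+\theta)R)\setminus D(R_0)$ — it relates the sup at a smaller scale to an average at a larger scale, which is the wrong direction for controlling growth. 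Your attempt to fix this by ``bounding $u$ pointwise by $M((1+\theta)R)$ on the inner part and picking up a small factor on the thin annulus'' cannot work, because $u$ attains its maximum on the outer boundary $\Sigma((1+\theta)R)$ (it is subharmonic since $\sigma\geq 0$), so there is no ``small factor'' region. The mean value inequality alone — which uses only $\Delta u\geq\sigma u$ — is not sufficient; one has to exploit the equation $\Delta u=\sigma u$ quantitatively at different radii, and a Li--Colding--Minicozzi style telescoping product of per-scale sup bounds does not do this.

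The paper's proof goes a structurally different route: set $\chi(r)=\int_{D(r)\setminus D(R_0)}u\,|\nabla\rho|^2\rho^{-4m}$ and derive, from Green's identity applied to $\Delta u=\sigma u$ on level sets of $\rho$ together with $\Delta\rho\leq m/\rho$, the differential inequality (Lemma~\ref{I}) $\frac{d}{dr}\bigl(r^{-4m}\int_{\Sigma(r)}u|\nabla\rho|\bigr)\leq r^{-4m}\int_{D(r)\setminus D(r_0)}\sigma u+\text{(boundary term)}$. The mean value property enters only inside a H\"older estimate: $\int_{\Sigma(t)}\sigma u$ is split as $\sigma^q$ against $u^p$, the $\sigma^q$ factor is controlled by $\alpha$, and $\int_{\Sigma(t)}u^p$ is bounded using $\sup_{\Sigma(t)}u\lesssim\theta^{-2\nu}\chi((1+\theta)t)\,t^{4m}/\mathrm{V}(t)$. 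This produces a closed integro-differential inequality for $\chi$. The growth bound is then extracted not by a convergent infinite product, but by a proof by contradiction with an induction on the polynomial exponent $b$ (with $\chi'(r)\leq\Lambda r^b$): one shows $\chi'(r)\leq\Lambda((\bar\alpha/b^\varepsilon)^{k/2}r^b+r^{b-1})$ for all $k$, where the key step is the scale choice $\theta=1/b$, which makes the competing factors $\theta^{-2\nu/q}$ and $(1+\theta)^{(b+1)/q}$ both $O(b^{2\nu/q})$ while the $\int\sigma u$ contribution supplies $\bar\alpha/b^{2+1/q}$, netting the gain $\bar\alpha/b^\varepsilon$ with $\varepsilon=(2q+1-2\nu)/q$. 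Letting $k\to\infty$ contradicts the minimality of $b$, forcing $\min\{b^\varepsilon/\bar\alpha,b\}\lesssim C_0^2$. This is where the dichotomy $q>\nu-\tfrac12$ versus $q=\nu-\tfrac12$ enters — conceptually parallel to your split, but implemented by a completely different mechanism that you would need to discover to make the argument close.
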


This result is reminiscent of Agmon type estimates in \cite{Ag, LW1, LW2},
where a positive subsolution $u$ to $L$ is shown to decay at a certain rate
if it does not grow too fast, provided that a Poincar\'e type inequality
holds on $M.$ Whether a positive solution $u$ to $L u=0$, under the
assumptions in Theorem \ref{Growthintr}, is automatically of polynomial
growth is unclear at this point. But we do confirm this is the case under a
pointwise assumption on $\sigma >0$ that 
\begin{equation}
\sup_{M}\left( \rho ^{2}\sigma \right) <\infty.  \label{sigmaintr}
\end{equation}

If we let 
\begin{equation*}
L^d(M)=\left\{v: \Delta v=\sigma v, \left\vert v\right\vert \leq c\,\rho ^{d}%
\text{ \ on } M \right\},
\end{equation*}
the space of polynomial growth solutions of degree at most $d,$ then an
argument verbatim to \cite{L} immediately gives the following estimate of
the dimension.

\begin{lemma}
\label{Dimintr} Assume that $\left( M,g\right)$ admits a proper function $%
\rho$ satisfying (\ref{L}) and has mean value property $\left( \mathcal{M}%
\right).$ Then $\dim L^d(M)\leq \Gamma(m, A_0, \nu, d).$
\end{lemma}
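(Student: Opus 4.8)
The plan is to follow the template from Li's dimension estimate for spaces of polynomial-growth harmonic functions in \cite{L}, replacing harmonic functions by solutions of $\Delta v=\sigma v$ and Poincar\'e-type inequalities by the mean value property $(\mathcal{M})$. The key structural fact we need is that on $D(R)\setminus D(R_0)$ any function $v\in L^d(M)$, being a solution of $\Delta v=\sigma v$, has both $v^{+}$ and $v^{-}$ satisfying the subsolution inequality $\Delta v^{\pm}\ge \sigma v^{\pm}$ away from the zero set, so $|v|$ is (in the barrier sense) a positive subsolution to which $(\mathcal{M})$ applies. Thus for any $v\in L^d(M)$ and $R\ge 4R_0$ we get a pointwise bound on $\sup_{\Sigma(R)}|v|$ in terms of the $L^1$-average of $|v|$ over an annular region $D((1+\theta)R)\setminus D(R_0)$, which in turn is controlled by the $L^2$-norm of $v$ on $D(2R)$ via Cauchy--Schwarz and the volume bound implied by $(\ref{L})$ (the condition $\Delta\rho\le m/\rho$ forces a polynomial volume growth $\mathrm{V}(r)\le C r^{m+1}$, say, by integrating along the level sets). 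This converts a dimension bound into a counting argument about $L^2$-almost-orthogonality.

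Next I would set up the standard linear-algebra lemma (due to Li, and used in \cite{CM1,CM2}): if $K=\dim L^d(M)$ then for any basis-independent choice one can find, for each $R$, a subspace on which the $L^2(D(R))$ inner products are nearly diagonalizable, and comparing the $L^2(D(\beta R))$ and $L^2(D(R))$ norms of elements of $L^d(M)$ gives a doubling-type inequality. Concretely, one shows that for $v\in L^d(M)$,
\begin{equation*}
\int_{D(\beta R)}v^2\ \le\ C(m,A_0,\nu,d,\beta)\int_{D(R)}v^2
\end{equation*}
for all large $R$ and fixed $\beta>1$: this follows because $\sup_{\Sigma(r)}|v|$ is controlled by the annular $L^1$-average (hence $L^2$-average) of $|v|$ by $(\mathcal{M})$, the polynomial growth $|v|\le c\rho^d$ is only used to guarantee finiteness and to absorb the contribution near $\Sigma(R_0)$, and integrating the level-set bound in $r$ together with the polynomial volume growth yields the stated doubling constant with exponent depending only on $m,\nu,d$. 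Given such a uniform doubling estimate, Li's abstract lemma (e.g. Lemma in \cite{L}) immediately bounds $\dim L^d(M)$ by a constant of the form $\Gamma(m,A_0,\nu,d)$, since the doubling constant caps the number of $L^2(D(\beta R))/L^2(D(R))$-almost-orthogonal functions.

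The main obstacle I expect is establishing the doubling inequality cleanly, i.e.\ passing from the \emph{pointwise} mean value bound $(\ref{meanintr})$ on $\Sigma(R)$ to an \emph{integral} comparison of $\int_{D(\beta R)}v^2$ with $\int_{D(R)}v^2$ uniformly in $R$. Two points require care: first, $(\mathcal{M})$ is stated for a single positive subsolution and at a single scale, so one must choose $\theta$ depending on $\beta$, iterate over dyadic shells between $R$ and $\beta R$, and sum the resulting geometric series, making sure the accumulated constants stay bounded by a function of $m,A_0,\nu,\beta$ alone; second, applying $(\mathcal{M})$ to $|v|$ rather than to a genuine smooth positive solution needs the standard observation that near the nodal set one can replace $|v|$ by $\sqrt{v^2+\varepsilon}$, which satisfies $\Delta\sqrt{v^2+\varepsilon}\ge\sigma\sqrt{v^2+\varepsilon}$ on the relevant annulus, and then let $\varepsilon\to0$. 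Once the doubling estimate is in place, the remainder is verbatim \cite{L}, as the statement asserts, so I would simply cite that argument for the final counting step rather than reproduce it.
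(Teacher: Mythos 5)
Your plan has a genuine gap in the key intermediate step, the uniform doubling estimate
\begin{equation*}
\int_{D(\beta R)}v^{2}\ \le\ C(m,A_{0},\nu,d,\beta)\int_{D(R)}v^{2}
\end{equation*}
for \emph{every} $v\in L^{d}(M)$ and all large $R$. The justification you give — ``$\sup_{\Sigma(r)}|v|$ is controlled by the annular $L^{1}$-average by $(\mathcal{M})$, integrate in $r$'' — runs in the wrong direction. The mean value property $(\ref{meanintr})$ bounds the supremum on $\Sigma(r)$ by the average over the \emph{larger} region $D((1+\theta)r)\setminus D(R_{0})$; it pushes information outward, not inward. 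If you try to estimate $\int_{D(\beta R)\setminus D(R)}v^{2}$ this way, you control it by $\fint_{D((1+\theta)\beta R)}v^{2}$, which is circular (the normalizing volume ratio is bounded by volume doubling, so no smallness is gained). Nothing in the hypotheses — mean value property plus $|v|\le c\rho^{d}$ — forces $\int_{D(R)}v^{2}$ from below, so a reverse Harnack/doubling inequality at every scale, uniform over $v\in L^{d}(M)$, is simply not available. For harmonic polynomials on $\mathbb{R}^{n}$ the analogous doubling does hold, but the proof uses the $L^{2}$-orthogonality of spherical harmonics; deriving the analogue from $(\mathcal{M})$ alone would require something like a frequency bound or a reverse Poincar\'e inequality, which are not in the toolbox here. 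Also, as an aside, it is cleaner to apply $(\mathcal{M})$ directly to $v^{2}$, which is automatically a nonnegative subsolution since $\Delta v^{2}=2|\nabla v|^{2}+2\sigma v^{2}\ge\sigma v^{2}$, rather than to regularize $|v|$.

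The paper avoids this issue entirely via Li's trace/determinant argument, which only needs non-degeneracy at \emph{one} scale, \emph{averaged over an orthonormal basis}, rather than doubling for every individual element at every scale. Concretely: for an $l$-dimensional subspace $\mathcal{W}_{l}\subset L^{d}(M)$, with $A_{R}(u,v)=\int_{D(R)}uv$, one shows there exists a radius $R$ such that $\operatorname{tr}_{A_{2R}}A_{R}\ge l/\bar{\Gamma}$ with $\bar{\Gamma}=2^{\gamma(m)+2d+1}$. This is done by contradiction: if the trace bound fails for all $R$, then $\det_{A_{2R}}A_{R}\le\bar{\Gamma}^{-l}$, iterating gives $\det_{A_{R}}A_{2^{j}R}\ge\bar{\Gamma}^{lj}$, while the polynomial growth $|v|\le c\rho^{d}$ together with $\mathrm{V}(r)\le r^{\gamma(m)}\mathrm{V}(R_{0})$ forces $\det_{A_{R}}A_{2^{j}R}\le\Lambda^{2l}(2^{j}R)^{(\gamma(m)+2d)l}\mathrm{V}(R_{0})^{l}$, a contradiction for large $j$. (Note that the polynomial growth enters essentially here, not merely ``to guarantee finiteness'' as you suggest.) Then, at that good scale $R$, for each $x\in\Sigma(R)$ one rotates the orthonormal basis so that $u_{2}(x)=\dots=u_{l}(x)=0$, applies $(\mathcal{M})$ with $\theta=1$ to the subsolution $u_{1}^{2}$ (which is $A_{2R}$-normalized) to get $\sum_{i}u_{i}^{2}(x)=u_{1}^{2}(x)\le C(A_{0},\mu)/\mathrm{V}(2R)$, uses subharmonicity of $\Psi=\sum u_{i}^{2}$ to extend this to $D(R)$ via the maximum principle, and then integrates against the trace lower bound to conclude $l\le C(A_{0},\mu)\bar{\Gamma}$. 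You should replace your doubling step with this codimension-one rotation trick and the trace/determinant iteration.
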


Summarizing, we have the following conclusion, where $\mathcal{P}$ is the
space spanned by all positive solutions to the equation $\Delta u=\sigma u$
on $M.$

\begin{theorem}
\label{dimpreintr} Assume that $\left( M,g\right)$ admits a proper function $%
\rho$ satisfying (\ref{L}) and has mean value property $\left( \mathcal{M}%
\right).$ Suppose that $\sigma$ decays quadratically. Then $\dim \mathcal{P}%
\leq \Gamma( m, A_{0},\nu, \alpha)$ provided that $\alpha<\infty$ for some $%
q>\nu -\frac{1}{2}.$ In the critical case $q=\nu -\frac{1}{2},$ the same
conclusion holds for some $\Gamma( m, A_{0},\nu)$ when $\alpha\leq \alpha
_{0}( m, A_{0}, \nu),$ a sufficiently small positive constant. Consequently,
the number of ends $e(M)$ of $M$ satisfies the same estimate as well.
\end{theorem}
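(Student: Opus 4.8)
The plan is to deduce the result from Lemma \ref{Dimintr} by exhibiting a single exponent $\Gamma=\Gamma(m,A_{0},\nu,\alpha)$ with the property that \emph{every} positive solution of $\Delta u=\sigma u$ on $M$ lies in $L^{\Gamma}(M)$. Once this is in hand, $\mathcal{P}$, being the linear span of such solutions, is contained in the vector space $L^{\Gamma}(M)$, so $\dim\mathcal{P}\le\dim L^{\Gamma}(M)\le\Gamma(m,A_{0},\nu,\Gamma)$ by Lemma \ref{Dimintr} --- a bound of the asserted form --- and the bound for $e(M)$ then follows from Theorem \ref{Eintr}. So the heart of the matter is the \emph{uniform} polynomial bound on positive solutions, which I would obtain in two steps.

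\emph{Step 1: each positive solution is of polynomial growth.} Put $b=\sup_{M}(\rho^{2}\sigma)<\infty$. Let $u>0$ solve $\Delta u=\sigma u$ on $M$; since $\sigma u\ge0$, $u$ is subharmonic, so $\sup_{D(r)}u=\sup_{\Sigma(r)}u$ once $\Sigma(r)$ separates $M$. On $D(2R)\setminus D(R/2)$ the potential satisfies $\sigma\le 4b/R^{2}$, i.e.\ $\sigma$ is of scale‑invariant size, so $u$ looks almost harmonic at scale $R$. Testing the equation against $\varphi^{2}u$, with $\varphi$ a Lipschitz cut‑off of $\rho$ and using $\sigma\ge0$ favorably, gives a reverse Poincar\'e inequality $\int_{D(R)}|\nabla u|^{2}\le CR^{-2}\int_{D(2R)}u^{2}$; feeding this, together with the polynomial volume growth forced by (\ref{L}), into the Sobolev/Moser machinery underlying the mean value property $(\mathcal{M})$ yields a polynomial bound $\int_{D(R)}u^{2}\le CR^{N}$, and then $(\mathcal{M})$ with $\theta=1$ together with Cauchy--Schwarz gives
\[
\sup_{\Sigma(R)}u\ \le\ \frac{A_{0}}{\mathrm{V}(2R)}\int_{D(2R)}u\ \le\ \frac{A_{0}}{\mathrm{V}(2R)^{1/2}}\left(\int_{D(2R)}u^{2}\right)^{1/2}\ \le\ C R^{N/2},
\]
since $\mathrm{V}(2R)$ is bounded below by a fixed positive constant. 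This step is the main obstacle, and it is precisely where the quadratic decay (\ref{sigmaintr}) is indispensable: dropping it, $u$ may grow exponentially, as shown by $u=e^{x_{1}}$ on $\mathbb{R}^{n}$, for which $\sigma\equiv1$.

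\emph{Step 2: uniformizing the degree.} A solution on $M$ is in particular a solution on $M\setminus D(R_{0})$, so by Step 1 and Theorem \ref{Growthintr} there is $\Gamma=\Gamma(m,A_{0},\nu,\alpha)$, \emph{not depending on} $u$, with $u\le\Lambda_{u}(\rho^{\Gamma}+1)$ on $M\setminus D(R_{0})$; in the critical case $q=\nu-\tfrac12$ one gets $\Gamma=\Gamma(m,A_{0},\nu)$ provided $\alpha\le\alpha_{0}(m,A_{0},\nu)$. As $\overline{D(R_{0})}$ is compact and $\rho$ is bounded below by a positive constant everywhere (its sublevel sets are compact), $u$ is also bounded on $D(R_{0})$ by a multiple of $\rho^{\Gamma}$; hence $u\le c_{u}\rho^{\Gamma}$ on all of $M$, i.e.\ $u\in L^{\Gamma}(M)$ with $\Gamma$ independent of $u$. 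This is the uniform bound required above, and the estimate $\dim\mathcal{P}\le\Gamma(m,A_{0},\nu,\alpha)$ (resp.\ $\Gamma(m,A_{0},\nu)$ in the critical case) follows.

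\emph{Step 3: counting ends.} For any $r_{0}$ for which $M\setminus B_{p}(r_{0})$ has $l\ge2$ unbounded components, Theorem \ref{Eintr} produces linearly independent positive solutions $u_{1},\dots,u_{l}$ of $\Delta u_{i}=\sigma u_{i}$ on $M$, whence $l\le\dim\mathcal{P}\le\Gamma$. Letting $r_{0}\to\infty$ (the numbers of unbounded components of $M\setminus B_{p}(r_{0})$ increase to $e(M)$, and the case $e(M)\le1$ is trivial) gives $e(M)\le\Gamma(m,A_{0},\nu,\alpha)$, resp.\ $\Gamma(m,A_{0},\nu)$ in the critical case. Apart from Step 1, every ingredient is already in place, so the proof is short once the polynomial growth of positive solutions has been established.
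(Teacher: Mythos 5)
Your Steps 2 and 3 reproduce the paper's route: Corollary~\ref{BF} (which combines Proposition~\ref{G} with Theorem~\ref{Growth}) gives $\mathcal{P}\subset L^{\Gamma}(M)$, Lemma~\ref{dimpre1} bounds $\dim L^{\Gamma}(M)$, and Theorem~\ref{E} bounds $e(M)$ by $\dim\mathcal{P}$. But your Step~1 — the ``main obstacle'' in your own words — has a genuine gap, and it is not the argument the paper uses.

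The claim that reverse Poincar\'e plus polynomial volume growth plus the Moser/Sobolev machinery yields $\int_{D(R)}u^{2}\le CR^{N}$ is not justified and does not follow from those ingredients. The reverse Poincar\'e inequality $\int_{D(R)}|\nabla u|^{2}\le CR^{-2}\int_{D(2R)}u^{2}$ holds for \emph{every} positive subsolution, exponentially growing or not, so it carries no growth information. Moser iteration then produces the scale-invariant mean value estimate $\sup_{\Sigma(R)}u\le C\,\mathrm{V}(2R)^{-1}\int_{D(2R)}u$, which is again a local statement: if $\int_{D(R)}u$ grew like $e^{cR}$, the estimate is perfectly consistent with that. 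Polynomial volume growth of $M$ does not force a doubling inequality for $\int_{D(R)}u^{2}$, and without one there is nothing to iterate. (The only situation where this circle of ideas closes ``for free'' is when one has a mean value \emph{equality}, not merely an inequality, which pins down $\int_{D(R)}u$ in terms of the volume.) So the display in your Step~1 asserts exactly what must be proved, and the heuristic ``$u$ looks almost harmonic at scale $R$'' begs the question, since the point is precisely that positive solutions under the hypotheses of the theorem, unlike general positive functions that are almost-harmonic at every scale, actually do grow polynomially.

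What the paper does instead (Proposition~\ref{G}) is an ODE argument on the flux. Using formula~(\ref{k0}) and $\Delta\rho\le m/\rho$, one shows that $\omega(r)=\int_{D(r)\setminus D(r_{0})}u\,|\nabla\rho|^{2}/\rho^{2}$ obeys the Euler-type inequality
\begin{equation*}
r^{2}\omega''(r)-(4m-2)\,r\,\omega'(r)-4\Upsilon\,\omega(r)\le C(r_{0}),
\end{equation*}
whose solutions are explicitly bounded by $Cr^{a+4m-1}$ with $a=a(m,\Upsilon)$. The pointwise quadratic decay $\sigma\le\Upsilon/\rho^{2}$ enters exactly in the zeroth-order coefficient, and the mean value property then upgrades the integral bound on $\omega$ to a pointwise polynomial bound on $u$. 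This is the ingredient your Step~1 needs and does not supply. Replacing your Step~1 with Proposition~\ref{G} makes the whole argument correct and identical in structure to the paper's proof.

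One more small point: your remark that one must let $r_{0}\to\infty$ to capture all ends is correct and is implicitly part of the paper's ``$e(M)\le\dim\mathcal{P}$'' step, since the functions produced by Theorem~\ref{E} for \emph{any} fixed $r_{0}$ all lie in $\mathcal{P}$, so the bound on $\dim\mathcal{P}$ is uniform over $r_{0}$.
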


It is well known that the mean value property $\left( \mathcal{M}\right)$ is
implied by the following scaling invariant Sobolev inequality via a Moser
iteration argument with the number $\nu$ determined by the Sobolev exponent $%
\mu$ through the equation 
\begin{equation*}
\frac{1}{\mu}+\frac{1}{\nu}=1.
\end{equation*}

\begin{definition}
$\left( M,g\right) $ is said to satisfy the Sobolev inequality $(\mathcal{S}%
) $ if there exist constants $\mu>1$ and $A>0$ such that 
\begin{equation}
\left( \fint_{D\left( R\right) }\phi ^{2\mu }\right) ^{\frac{1}{\mu }}\leq
AR^{2}\fint_{D\left( R\right) }\left( \left\vert \nabla \phi \right\vert
^{2}+\sigma \phi ^{2}\right)  \label{Sobintr}
\end{equation}%
for $\phi \in C_{0}^{\infty }\left( D\left( R\right) \right)$ and $R\geq
R_{0}.$
\end{definition}
We have denoted with 
\[
\fint_{D(R)}u=\frac{1}{\mathrm{V}(R)}\int_{D(R)}u
\]
for any integrable function $u$ on $D(R)$. 
Consequently, Theorem \ref{dimpreintr} continues to hold if one replaces the
mean value property $\left( \mathcal{M}\right) $ by the Sobolev inequality $(%
\mathcal{S}).$  

We also establish a version of Theorem \ref{Growthintr} localized to an end.

For an end $E$ of $M,$ define 
\begin{equation*}
\alpha _{E}=\limsup_{R\rightarrow \infty }\left( \frac{R^{2q}}{\mathrm{A}%
\left( R\right) }\int_{\partial E\left( R\right) }\sigma ^{q}\right) ^{\frac{%
1}{q}},
\end{equation*}%
where $E(R)=E\cap D(R)$ and $\partial E(R)=E\cap \Sigma(R).$

\begin{proposition}
\label{BFLocintr} Assume that $\left( M,g\right)$ admits a proper function $%
\rho$ satisfying (\ref{L}) and that the Sobolev inequality $(\mathcal{S})$
holds. Suppose that $\sigma$ decays quadratically along $E.$ Then there
exists $\Gamma \left( m, A, \mu, \alpha_E\right)>0$ such that 
\begin{equation*}
u\leq \Lambda \left( \rho ^{\Gamma}+1\right) \text{ \ on } E
\end{equation*}%
for positive solutions $u$ to $\Delta u=\sigma u$ on $E,$ where $\Lambda>0$
is a constant depending on $u$, provided that $\alpha_E<\infty$ for some $%
q>\nu -\frac{1}{2}$. In the case $q=\nu -\frac{1}{2},$ the same conclusion
holds for some $\Gamma( m, A,\mu )>0$ when $\alpha_E\leq \alpha _{0}( m, A,
\mu)$, a sufficiently small positive constant.
\end{proposition}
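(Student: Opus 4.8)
The plan is to carry out, localized to the end $E$, the argument behind Theorem~\ref{Growthintr}: replace the mean value property $(\mathcal{M})$ by what the scale-invariant Sobolev inequality $(\mathcal{S})$ yields through Moser iteration, and replace every sublevel set $D(r)$ and level set $\Sigma(r)$ by its trace $E(r),\partial E(r)$ on $E$. The scheme has two parts. First, a localized mean value inequality on $E$, obtained from $(\mathcal{S})$ exactly as $(\mathcal{M})$ is obtained in the global case. Second --- and this is the heart --- a scale-invariant Harnack inequality for positive solutions of $\Delta u=\sigma u$ on annuli of $E$, with a constant $C$ bounded in terms of $m$, $A$, $\mu$ and $\alpha_{E}$; iterating $\sup_{\partial E(2R)}u\leq C\sup_{\partial E(R)}u$ over the dyadic scales $R=2^{k}R_{0}$ then gives $u\leq\Lambda(\rho^{\Gamma}+1)$ on $E$ with $\Gamma=\log_{2}C$, the constant $\Lambda$ depending on $u$ (it absorbs the value of $u$ near $E\cap\Sigma(R_{0})$).

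For the mean value inequality, fix large $R$. Since $\rho$ is proper with $|\nabla\rho|\geq\tfrac12$, the set $E\cap\bigl(D((1+\theta)R)\setminus D(R_{0})\bigr)$ lies in the interior of $E$, at $\rho$-distance $\gtrsim R$ from the inner boundary $E\cap\Sigma(R_{0})$, so any $\phi\in C_{0}^{\infty}$ supported there extends by zero to an element of $C_{0}^{\infty}(D((1+\theta)R))$ and $(\mathcal{S})$ applies to it verbatim. Because $u>0$ solves $\Delta u=\sigma u$ with $\sigma\geq0$, in the De Giorgi--Nash--Moser iteration for $u$ as a subsolution of $\Delta-\sigma$ the nonnegative term $\int\sigma u^{2+\beta}\phi^{2}$ may simply be discarded, so no integrability of $\sigma$ is needed at this stage; running the standard iteration, with $\nu$ determined by $\tfrac1\mu+\tfrac1\nu=1$ and covering $\partial E(R)$ by finitely many pieces of bounded overlap, produces a constant $C(m,A,\mu)$ with
\[
\sup_{\partial E(R)}u\ \leq\ \frac{C(m,A,\mu)}{\theta^{2\nu}}\,\frac{1}{\mathrm{V}((1+\theta)R)}\int_{E((1+\theta)R)\setminus E(R_{0})}u,
\]
the exact analogue of (\ref{meanintr}) for functions living only on $E$.

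The Harnack inequality is where $\sigma$, $\alpha_{E}$ and the exponent $\nu-\tfrac12$ enter. Combining the mean value inequality of the previous step --- the sup-half of the Harnack inequality --- with the inf-bound of Moser iteration for $u$ as a $(\Delta-\sigma)$-supersolution on an annulus $E(4R)\setminus E(R/2)$, with the local Sobolev and Poincar\'e inequalities and the volume doubling furnished by $(\mathcal{S})$ and (\ref{L}) (here the quadratic decay of $\sigma$ is exactly what keeps the scale-invariant size of $\sigma$ bounded at every scale), and with a Bombieri--Giusti type crossover --- the chain of estimates being assembled along balls lying in the interior of $E$, so that any disconnectedness of the annulus is immaterial --- yields $\sup_{E(4R)\setminus E(R/2)}u\leq C\inf_{E(4R)\setminus E(R/2)}u$, whence $\sup_{\partial E(2R)}u\leq C\sup_{\partial E(R)}u$ for all large $R$, with $C$ uniform in $R$. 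The only term in the inf-bound lacking a favorable sign is one pairing $\sigma$ against a power of $u$; it must be absorbed, by H\"older and $(\mathcal{S})$, using a scale-invariant $L^{q}$-norm of $\sigma$ near $\Sigma(R)$. Since the quantity controlling $\alpha_{E}$, namely $\bigl(\fint_{\partial E(s)}\sigma^{q}\bigr)^{1/q}$, is a codimension-one (surface) average rather than a bulk one, pushing it through the iteration by the coarea formula and the volume growth forced by (\ref{L}) improves the naive integrability threshold $q>\nu$ to $q>\nu-\tfrac12$: for $q>\nu-\tfrac12$ the resulting scale-invariant $\sigma$-norm is globally $\lesssim\alpha_{E}$ and moreover tends to $0$ at small scales, so the iteration closes and $C=C(m,A,\mu,\alpha_{E})$; at the endpoint $q=\nu-\tfrac12$ the improvement is exhausted and the norm is merely uniformly $\lesssim\alpha_{E}$ at all scales, so the iteration closes with a universal constant only under $\alpha_{E}\leq\alpha_{0}(m,A,\mu)$, in which case $C$, and therefore $\Gamma$, depends only on $m$, $A$, $\mu$.

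I expect the main obstacle to be this third step, specifically two intertwined points. One is running Moser iteration and assembling the Harnack chain on \emph{annular} regions of $E$ rather than on balls: this requires the appropriate localized Sobolev/Poincar\'e and volume-doubling estimates on such regions, together with the care that every cutoff and every ball in the chain stays in the interior of $E$, away from $E\cap\Sigma(R_{0})$, whose bounded, $u$-dependent contributions are harmless and disappear into $\Lambda$. The other, more delicate, is the endpoint analysis at $q=\nu-\tfrac12$, where one must track the absolute constants in the coarea estimate and in the Moser iteration in order to extract a threshold $\alpha_{0}$ depending only on $m$, $A$, $\mu$. The remaining bookkeeping parallels the proof of Theorem~\ref{Growthintr} and is routine.
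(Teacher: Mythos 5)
Your first step, the localized mean value inequality obtained from $(\mathcal{S})$ by Moser iteration with cutoffs supported in $E$, is exactly Corollary \ref{MVILoc} of the paper, and that part is fine. But your second step --- a scale-invariant Harnack inequality for positive solutions of $\Delta u=\sigma u$ on annuli of $E$, followed by dyadic iteration of $\sup_{\partial E(2R)}u\leq C\sup_{\partial E(R)}u$ --- is not the paper's argument and, more importantly, has a genuine gap at the point you yourself flag. For the weak Harnack inequality for supersolutions of $\Delta-\sigma$, the term $\int\sigma\,u^{2k}\phi^{2}$ has the unfavorable sign and is absorbed by H\"older and Sobolev only when the scale-invariant $L^{q}$-norm of $\sigma$ is controlled with $q\geq\nu$, the standard De~Giorgi--Nash--Moser threshold ($q>n/2$ in the Euclidean case). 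Your heuristic that the codimension-one nature of $\alpha_{E}$ improves this to $q>\nu-\tfrac12$ does not hold up: the surface bound $\int_{\partial E(r)}\sigma^{q}\lesssim\alpha_{E}^{q}\,r^{-2q-1}\mathrm{V}(r)$, pushed through the coarea formula over a dyadic shell, gives $\bigl(R^{2q}\fint_{E(2R)\setminus E(R)}\sigma^{q}\bigr)^{1/q}\lesssim\alpha_{E}$ with the \emph{same} exponent $q$, so no gain appears at the level of the $L^{q}$-norm entering Moser iteration. A Harnack-chain argument would therefore only deliver the result for $q>\nu$ (and the endpoint $q=\nu$ with small $\alpha_{E}$), not the stated $q>\nu-\tfrac12$.

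The paper obtains the sharper threshold by a different mechanism. It localizes Lemma \ref{I} to $E$, yielding a differential inequality for $\int_{\partial E(r)}u\,|\nabla\rho|$; then (as in the derivation of (\ref{id}) in Theorem \ref{Growth}) a H\"older estimate of $\int_{\Sigma(t)}\sigma u/|\nabla\rho|$ using the \emph{surface} $L^{q}$-bound, together with $\mathrm{A}(t)\lesssim\mathrm{V}(t)/t$, produces the integro-differential inequality
\[
r^{4m}\chi_{E}''(r)\ \leq\ \frac{C_{0}\bar{\alpha}_{E}}{\theta^{2\nu/q}}\int_{r_{0}}^{r}\chi_{E}^{1/q}\bigl((1+\theta)t\bigr)\,\bigl(\chi_{E}'(t)\bigr)^{1-1/q}\,t^{4m-2-\frac{1}{q}}\,dt+\Lambda_{0},
\]
whose extra factor $t^{-1/q}$ is precisely where the surface average earns its $+\tfrac12$ over the bulk average. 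The polynomial growth order $b$ is then bounded by an induction that improves the coefficient by a factor $\bigl(\bar\alpha_{E}/b^{\varepsilon}\bigr)^{1/2}$ at each step, with $\varepsilon=(2q+1-2\nu)/q$; the condition $q>\nu-\tfrac12$ is exactly $\varepsilon>0$, and the endpoint $q=\nu-\tfrac12$ corresponds to $\varepsilon=0$, where one needs $\bar\alpha_{E}$ small. This functional/ODE induction is structurally different from a Harnack chain, and it is what makes $\nu-\tfrac12$ (rather than $\nu$) the correct critical exponent. If you want to salvage your route, you would need a genuinely sharper Moser-type lemma in which the $\sigma$-term is estimated through the level-set foliation of $\rho$ rather than by the bulk $L^{q}$-norm, which essentially rederives the paper's inequality; otherwise the Harnack approach only proves the weaker result with $q>\nu$.
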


Corresponding to an end $E,$ let $u_{E}$ be the positive solution of $\Delta
u_E=\sigma u_{E}$ on $M$ constructed in Theorem \ref{Eintr}. Then $%
0<u_{E}\leq 1$ on $M\setminus E.$ Proposition \ref{BFLocintr} implies that
such $u_E$ must be of polynomial growth on $M$ with the given growth order.
With this in hand and in view of Lemma \ref{Dimintr}, for the case of
critical $q=\nu -\frac{1}{2},$ one concludes that the number of ends with
small $\alpha_E$ is bounded. For an asymptotically conical gradient
shrinking Ricci soliton $M,$ it is not difficult to show that at least one half
of the ends have small $\alpha_E$ if the total number of ends
is large. Obviously, Theorem \ref{T1intr} follows, at least for asymptotically conical 
shrinking Ricci solitons, from these facts as well.

Sobolev inequalities are prevalent in geometry. Other than the
aforementioned ones for gradient shrinking Ricci solitons and submanifolds in the Euclidean
spaces, for manifolds with Ricci curvature bounded from below by a constant $-K,$ $K\geq 0,$ 
according to \cite{SC1}, the Sobolev inequality (\ref{Sobintr})
holds on any geodesic ball $B_{p}(R) $ with constant $A=e^{c\left(n\right) \left( 1+\sqrt{K}R\right) }$ 
and $\sigma =\frac{1}{R^{2}}.$ Finally, for a locally conformally flat manifold $M,$ by \cite{SY}, a
suitable cover of $M$ can be mapped conformally into $\mathbb{S}^{n}$ and
satisfies a similar Sobolev inequality of gradient shrinking Ricci solitons.

For a comprehensive study of Sobolev inequalities on manifolds and their
applications, we refer to \cite{He, SC}.

The paper is organized as follows. In Section \ref{ToppingEnds}, we present the proof of Theorem \ref{T2intr}
and derive some of its consequences.
In Section \ref{Ends} we focus on the
proof of Theorem \ref{Eintr}. We then turn to estimates of positive
solutions to $\Delta u=\sigma u$ in Section \ref{GR} and prove Theorem \ref%
{Growthintr}. The dimension estimate given in Lemma \ref{Dimintr} is proved
in Section \ref{ED}. Section \ref{Sob} is devoted to proving the fact that
the mean value property $\left( \mathcal{M}\right)$ follows from the Sobolev
inequality $(\mathcal{S})$. 

\section{Sobolev inequality and ends}\label{ToppingEnds}

In this section, we prove Theorem \ref{T2intr} following 
the ideas in \cite{T1,T2}.  To include the case $n=2,$ we consider more generally
complete noncompact Riemannian manifolds $\left( M,g\right) $ satisfying the Sobolev inequality 

\begin{equation}
\left( \int_{M}\vert\phi\vert ^{\frac{q\,n}{n-q}}\right) ^{\frac{n-q}{n}}\leq A
\int_{M}\left(\left\vert \nabla \phi \right\vert ^{q}+\sigma \vert\phi\vert
^{q}\right)  \label{l1}
\end{equation}%
for some $q$ with $1\leq q\leq n-1$ and any $\phi \in C_{0}^{\infty }\left( M\right),$ where $A>0$ is a
constant and $\sigma \geq 0$ a continuous function.
Define 
\begin{equation}
\alpha =\limsup_{R\rightarrow \infty } \frac{1}{\mathrm{V}_p\left(R\right) }
\int_{B_p\left( R\right) }\left( r^{q}\sigma \right) ^{\frac{n-1}{q}} \label{l2}
\end{equation}%
and
\begin{equation}
V_{\infty }=\limsup_{R\rightarrow \infty }\frac{\mathrm{V}_p\left(R\right) }{R^{n}}, \label{l3}
\end{equation}
where $p\in M$ is a fixed point, $r\left( x\right) =d\left( p, x\right) $ is
the distance function to $p,$ and $\mathrm{V}_p\left(R\right) =\mathrm{Vol}\left( B_p\left(R\right) \right),$ 
the volume of the geodesic ball $B_p(R)$ centered at $p$ of radius $R.$

We restate Theorem \ref{T2intr} below under this more general Sobolev inequality.

\begin{theorem}
Let $\left( M,g\right) $ be an $n$ dimensional complete Riemannian manifold satisfying the Sobolev inequality (\ref{l1}). If both
$\alpha$ of (\ref{l2}) and $V_{\infty}$ of (\ref{l3}) are finite, then the number of ends of $M$ is bounded from above by a constant $\Gamma$
depending only on $n,\,A,\,\alpha $ and $V_{\infty }.$
\end{theorem}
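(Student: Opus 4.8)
The plan is to adapt Topping's diameter estimate from the compact setting to produce a lower volume bound on ends. Fix a compact domain $\Omega$ realizing $e(M)$ ends $E_1,\dots,E_k$, and suppose $k$ is large. The key quantitative statement to extract from the Sobolev inequality is: for each large $R$, there is a subset $\mathcal{I}_R$ of the ends with $|\mathcal{I}_R|\geq k/2$ such that $\mathrm{V}(E_i\cap B_p(R))\geq c\,R^n$ for all $i\in\mathcal{I}_R$, where $c=c(n,A,\alpha)>0$. Granting this, since $\mathrm{V}_p(R)\geq \sum_{i\in\mathcal{I}_R}\mathrm{V}(E_i\cap B_p(R))\geq (k/2)\,c\,R^n$, while $\mathrm{V}_p(R)\leq 2V_\infty R^n$ for $R$ large, we get $k/2\leq 2V_\infty/c$, i.e. $k\leq 4V_\infty/c=\Gamma(n,A,\alpha,V_\infty)$, as desired. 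So the whole argument reduces to the volume lower bound on a fixed fraction of the ends.

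To prove that volume bound I would run Topping's argument on each end. Pick a point $x_i$ deep inside $E_i$ at distance roughly $R$ from $p$, and for a function $\phi$ supported near the portion of $E_i$ inside $B_p(R)$ apply the Sobolev inequality \eqref{l1}. The idea of Topping is to test with functions built from the distance function and iterate over dyadic annuli: if the volume of $E_i\cap B_p(R)$ were too small, one could propagate a contradiction with the Sobolev inequality along a chain of balls. Concretely, choose $\phi$ to be a cutoff equal to $1$ on a ball $B_{x_i}(t)\subset E_i$ and supported in $B_{x_i}(2t)$; then the left side of \eqref{l1} is at least $\mathrm{V}(B_{x_i}(t))^{(n-q)/n}$, while the gradient term contributes $\lesssim t^{-q}\mathrm{V}(B_{x_i}(2t))$ and the potential term contributes $\int_{B_{x_i}(2t)}\sigma\,\phi^q\lesssim \int_{B_{x_i}(2t)}\sigma$. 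Controlling the last integral is exactly where $\alpha<\infty$ enters: on $B_{x_i}(2t)$, the weight $r^q\sigma$ has integral bounded in terms of $\alpha\,\mathrm{V}_p(R)\sim \alpha V_\infty R^n$ (to the power $q/(n-1)$, after Hölder), and since $r\sim R$ there we convert this into a bound on $\int\sigma$ of the form $C\,R^{-q}R^n\cdot(\text{something involving }\alpha)$. Combining these, a Moser-type / De Giorgi iteration over the scales $t, t/2, t/4,\dots$ inside $E_i$ forces either $\mathrm{V}(E_i\cap B_p(R))\geq c R^n$ or an impossibility; choosing the base scale $t\sim R$ and keeping track of constants gives $c=c(n,A,\alpha)$. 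Running this for each $i$ and noting that the "bad" ends — those violating the volume bound — cannot number more than $k/2$ if $k$ is large (again by comparing total volume to $2V_\infty R^n$), yields $\mathcal{I}_R$.

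The case $n=2$ (where the exponent $qn/(n-q)$ with $q=1$ degenerates appropriately, $q=n-1=1$) needs a small modification: the Sobolev inequality \eqref{l1} becomes an $L^1$–type inequality $\bigl(\int|\phi|^2\bigr)^{1/2}\leq A\int(|\nabla\phi|+\sigma|\phi|)$, and Topping's original argument was in fact written for this exponent, so one essentially quotes \cite{T1,T2} directly for $n=2$ and uses the power-$q$ version for $n\geq 3$. The main obstacle I anticipate is the iteration bookkeeping: one must show the constant $c$ does not degenerate as one iterates across $O(\log R)$ scales — i.e. that the contributions of $\sigma$ and of the cutoff gradients, summed geometrically over scales, stay bounded independent of $R$. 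This is precisely why the exponent in $\alpha$ is $\frac{n-1}{q}$ rather than $\frac{n}{q}$: it makes the scaling work out so that the accumulated error telescopes. A secondary subtlety is ensuring $x_i$ can be chosen at controlled distance inside $E_i$ with a genuine ball $B_{x_i}(t)\subset E_i$ of radius comparable to $R$; this uses that $E_i$ is an unbounded component of $M\setminus\Omega$ and a standard connectedness argument along minimizing geodesics.
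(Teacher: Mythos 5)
Your high-level strategy matches the paper's: run a Topping-style dichotomy driven by the Sobolev inequality, extract a scale-invariant volume lower bound for most ends in an annulus of radius $\sim R$, and sum against the asymptotic volume bound $\mathrm{V}_p(R)\leq 2V_\infty R^n$. The dyadic iteration you sketch — ``if $\mathrm{V}_x(r)\leq\delta^{(n-1)/q}r^n$ and $\int_{B_x(r)}\sigma$ obeys a smallness bound, then the same holds at scale $r/2$, and letting $r\to 0$ contradicts Euclidean volume comparison'' — is exactly the inner loop of the paper's argument. So the approach is right.

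However, there are two genuine gaps. First, the dichotomy at a single center $x$ only yields: either $\mathrm{V}_x(R/3)\gtrsim R^n$, or there exists \emph{some} scale $r_x<R/3$ with $\int_{B_x(r_x)}\sigma^{(n-1)/q}\geq r_x/C_3$. Since $r_x$ can be arbitrarily small, one ball gives no useful lower bound on the $\sigma$-integral over the end. The paper fixes this by running the dichotomy at \emph{every} point $x=\gamma(t)$, $t\in[\tfrac{4}{3}R,\tfrac{5}{3}R]$, along a minimizing geodesic $\gamma$ from $p$ to a point $z\in\partial E(2R)$, and then using a Vitali-type covering to select disjoint bad balls whose radii sum to $\geq R/9$; only then does one get $\int_{E(3R)\setminus E(R)}\sigma^{(n-1)/q}\gtrsim R$. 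Your sketch omits this covering step entirely, and without it the argument does not close.

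Second, your final step is logically inverted: you assert that the ``bad'' ends (those with small annular volume) cannot number more than $k/2$ ``by comparing total volume to $2V_\infty R^n$.'' But small-volume ends contribute \emph{less} to $\mathrm{V}_p(R)$, so the total volume bound says nothing about how many of them there are — it only caps the number of \emph{large}-volume ends. The bad ends must instead be controlled via the $\sigma$-integral: each bad end carries $\int_{E(3R)\setminus E(R)}\sigma^{(n-1)/q}\gtrsim R$ (by the covering step above), while the $\alpha<\infty$ and $V_\infty<\infty$ hypotheses give the total bound $\sum_i\int_{E_i(3R)\setminus E_i(R)}\sigma^{(n-1)/q}\leq C\,\alpha\,V_\infty\,R$, so only boundedly many ends can be bad. (In the paper this bookkeeping is organized slightly differently — the ends are ordered by their $\sigma$-integral, and the one with both smallest $\sigma$-integral and smallest $\mathrm{V}_{z_i}(R)$ among the first $\lfloor k/2\rfloor$ is shown to produce a direct contradiction — but the content is the same.) To complete your version of the argument you would need to make both of these repairs explicit.
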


\begin{proof}
For an end $E$ of $M$ we denote $E(R) =B_p(R) \cap E.$
Assume that $M$ has at least $k$ ends with $k>1$ large, to be
specified later. We may take $R>0$ large enough such that 

\begin{equation*}
B_p(2R) \backslash B_p(R) =\cup_{i=1}^{k}E_{i}( 2R) \backslash E_{i}( R).
\end{equation*}%
Moreover, we have from (\ref{l3}) that 
\begin{equation}
\frac{\mathrm{V}_p(t) }{t^{n}}\leq 2\,V_{\infty }  \label{l3.1}
\end{equation}%
for all $t\geq R$.
Similarly, by (\ref{l2}) we have, 

\begin{equation*}
\sum_{i=1}^{k}\int_{E_{i}( 3R) \backslash E_{i}( R)
}( r^{q}\sigma) ^{\frac{n-1}{q}}\leq 2\alpha\,\mathrm{V}_p(3R).
\end{equation*}%
This implies that
 \begin{equation}
\sum_{i=1}^{k}\int_{E_{i}( 3R) \backslash E_{i}( R)
}\sigma ^{\frac{n-1}{q}}\leq C_{0}\,\frac{\mathrm{V}_p(3R) }{R^{n-1}}.  \label{l4}
\end{equation}%
Here and below constants $C_0,\; C_1,...$ depend only on $n,\,A,\,\alpha $ and $V_{\infty }.$ 

We may assume that the ends $E_{1},\cdots,E_{k}$ are labeled so that 
\begin{equation*}
\left\{ \int_{E_{i}\left( 3R\right) \backslash E_{i}\left( R\right) }\sigma
^{\frac{n-1}{q}}\right\} _{i=1,\cdots,k}
\end{equation*}%
is an increasing sequence. Then (\ref{l4}) implies that 
\begin{equation}
\int_{E_{i}\left( 3R\right) \backslash E_{i}\left( R\right) }\sigma ^{\frac{%
n-1}{q}}\leq \frac{2C_{0}}{k}\frac{\mathrm{V}_p(3R) }{R^{n-1}}
\label{l5}
\end{equation}%
for all $i=1,2,\cdots,\left[\frac{k}{2}\right]$.

For $i\in \left\{ 1,2,\cdots,\left[ \frac{k}{2}\right] \right\},$ pick
 \begin{equation}
z_{i}\in \partial E_{i}( 2R).  \label{l5.1}
\end{equation}%
By relabeling $E_{1},..,E_{\left[ \frac{k}{2}\right] }$ if necessary, we may assume that 
\begin{equation*}
\left\{ \mathrm{V}_{z_i}(R) \right\} _{i=1,\cdots,\left[ \frac{k}{2}\right] }
\end{equation*}%
is increasing.

Assume by contradiction that
\begin{equation}
\mathrm{V}_{z_1}(R) \geq \frac{C_{1}}{k}\,R^{n},  \label{l6}
\end{equation}%
where $C_{1}=3^{n+2}\,V_{\infty}.$ Since 
\begin{equation*}
B_{z_i}(R) \subset E_{i}( 3R) \backslash E_{i}(R)
\end{equation*}%
and $\left\{ B_{z_i}(R) \right\} _{i=1}^{\left[ \frac{k}{2}\right]}$ are disjoint in 
$B_p(3R),$ it follows from (\ref{l6}) that
\begin{equation*}
\mathrm{V}_p(3R)  \geq \sum_{i=1}^{\left[ \frac{k}{2}\right] }%
\mathrm{V}_{z_i}(R)  
\geq \left[ \frac{k}{2}\right] \frac{C_{1}}{k}\,R^{n}
\geq \frac{C_{1}}{3}\,R^{n}
= 3 V_\infty (3R)^n
\end{equation*}%
as $C_{1}=3^{n+2}V_{\infty }.$ This contradicts (\ref{l3.1}). In
conclusion, (\ref{l6}) does not hold and
\begin{equation*}
\mathrm{V}_{z_1}(R) <\frac{C_{1}}{k}\,R^{n}.
\end{equation*}%
For convenience, from now on we simply write $E=E_{1}$ and $z=z_{1}$.
Hence, we have $z\in \partial E( 2R) $ and 
\begin{equation}
\mathrm{V}_z(R) <\frac{C_{1}}{k}\,R^{n}.  \label{l7}
\end{equation}%
By (\ref{l5}) we also have 
\begin{equation}
\int_{E\left( 3R\right) \backslash E\left( R\right) }\sigma ^{\frac{n-1}{q}%
}\leq \frac{C_{2}}{k}\,\frac{\mathrm{V}_p(3R) }{R^{n-1}}.
\label{l8}
\end{equation}
Let $\gamma( t) $ be a minimizing geodesic from $p$ to $z$
with $0\leq t\leq 2R$. For $t\in \left[ \frac{4}{3}\,R,\frac{5}{3}\,R\right]$ and $x=\gamma( t) $, since
\begin{equation*}
d( x, z) \leq \frac{2}{3}\,R,
\end{equation*}%
the triangle inequality implies 
\begin{equation}
B_x\left(\frac{R}{3}\right) \subset B_z(R).  \label{l8.0}
\end{equation}%
Consequently, (\ref{l7}) yields 
\begin{equation}
\mathrm{V}_x\left(\frac{R}{3}\right) <\frac{C_{1}}{k}\,R^{n} \label{l8.1}
\end{equation}%
for all $x=\gamma( t) $ with $t\in \left[ \frac{4}{3}\,R,\frac{5}{3}\,R\right]$.

Assume by contradiction that 
\begin{equation}
\int_{B_x\left(r\right) }\sigma \leq \delta \,r^{\frac{q}{n-1}}\,\left(\mathrm{V}_x\left(r\right)\right) ^{\frac{n-q-1}{n-1}}  \label{l9}
\end{equation}%
for all $0<r<\frac{R}{3},$ where $\delta >0$ is small constant to be set later.

For $0<r<\frac{R}{3}$ fixed, we apply the Sobolev inequality to cut-off function $\phi $
with support in $B_x( r) $ such that $\phi =1$ on 
$B_x\left(\frac{r}{2}\right) $ and $\left\vert \nabla \phi \right\vert \leq \frac{2}{r}.$
Then (\ref{l1}) implies that 
\begin{equation}
\left(\mathrm{V}_x\left(\frac{r}{2}\right)\right) ^{\frac{n-q}{n}}\leq A\left( \frac{2^q}{r^{q}}\,\mathrm{V}_x\left(r\right) +\int_{B_x\left(r\right) }\sigma \right). \label{I10}
\end{equation}%
Using (\ref{l9}) we obtain that 
\begin{equation}
\left(\mathrm{V}_x\left(\frac{r}{2}\right) \right)^{\frac{n-q}{n}}\leq A\left( \frac{2^q}{r^{q}}\,\mathrm{V}_x(r) +\delta \,
r^{\frac{q}{n-1}}\,\left(\mathrm{V}_x(r)\right)^{\frac{n-q-1}{n-1}}\right)   \label{l11}
\end{equation}%
for any $0<r<\frac{R}{3}.$ Let us assume there exists $0<r<\frac{R}{3}$ so
that 
\begin{equation}
\mathrm{V}_x\left(r\right) \leq \delta^{\frac{n-1}{q}}\,r^{n}.  \label{l12}
\end{equation}%
Then by (\ref{l11}) we have 
\begin{equation*}
\left(\mathrm{V}_x\left(\frac{r}{2}\right) \right)^{\frac{n-q}{n}}\leq A(2^q+1)\,\delta ^{\frac{n-1}{q}}\,r^{n-q}.
\end{equation*}%
Hence,
\begin{equation}
\mathrm{V}_x\left(\frac{r}{2}\right) \leq 2^n\,\left( A(2^{q}+1)\right) ^{\frac{n}{n-q}}\delta ^{\frac{n-1}{n-q}}\,
\delta ^{\frac{n-1}{q}}\,\left( \frac{r}{2}\right) ^{n}.  \label{l13}
\end{equation}%
We now choose $\delta $ to be small enough so that 
\begin{equation*}
2^n\,\left( A(2^{q}+1)\right) ^{\frac{n}{n-q}}\delta ^{\frac{n-1}{n-q}}<1.
\end{equation*}%
Then (\ref{l13}) implies 
\begin{equation}
\mathrm{V}_x\left(\frac{r}{2}\right) \leq \delta ^{\frac{n-1}{q}}\,\left( \frac{r}{2}\right) ^{n}.  \label{l14}
\end{equation}%
In conclusion, assuming that \eqref{l9} holds for any $0<r<\frac{R}{3},$ we have shown that (\ref{l12}) implies (\ref{l14}).

By assuming $k$ to be sufficiently large such that $\frac{3^{n}C_1}{k}\leq \delta ^{\frac{n-1}{q}},$ (\ref{l8.1}) says that 
\begin{equation*}
\mathrm{V}_x\left(\frac{R}{3}\right) \leq \delta ^{\frac{n-1}{q}}\,\left( \frac{R}{3}\right) ^{n},
\end{equation*}%
that is, (\ref{l12}) holds for $r=\frac{R}{3}.$ Applying (\ref{l12}) and (\ref{l14}) inductively, 
we conclude that 
\begin{equation*}
\mathrm{V}_x\left(\frac{R}{3\cdot 2^{m}}\right) \leq \delta ^{\frac{n-1}{q}}\,\left( \frac{R}{3\cdot 2^{m}}\right) ^{n}
\end{equation*}%
for all $m\geq 0.$ Letting $m\rightarrow \infty $ we reach a contradiction by further arranging $\delta$ to be sufficiently small such that $\delta ^{\frac{n-1}{q}}<\omega_n,$
the volume of the unit ball in the Euclidean space $\mathbb{R}^n.$

The contradiction implies that (\ref{l9}) does not hold. Therefore, for any $x=\gamma \left( t\right),$ 
$t\in \left[ \frac{4}{3}\,R,\frac{5}{3}\,R\right],$ there exists $0<r_{x}<\frac{R}{3}$ such that 
\begin{equation}
\int_{B_x\left(r_{x}\right) }\sigma >\delta \left( r_{x}\right) ^{\frac{q}{n-1}}\,
\left(\mathrm{V}_x(r_{x}) \right)^{\frac{n-q-1}{n-1}}.  \label{l16}
\end{equation}%
By the H\"{o}lder inequality we have 
\begin{equation*}
\int_{B_x\left( r_{x}\right) }\sigma \leq \left( \int_{B_x\left(r_{x}\right) }\sigma ^{\frac{n-1}{q}}\right) ^{\frac{q}{n-1}}\,
\left(\mathrm{V}_x\left(r_{x}\right) \right)^{\frac{n-q-1}{n-1}}.
\end{equation*}%
Thus, by (\ref{l16}) we get 

\begin{equation}
\int_{B_x\left(r_{x}\right) }\sigma ^{\frac{n-1}{q}}\geq \frac{1}{C_{3}}\,r_{x}  \label{l17}
\end{equation}%
for any $x=\gamma \left( t\right)$ and $t\in \left[ \frac{4}{3}\,R,\frac{5}{3}\,R\right].$

By a covering argument as in \cite{T1, T2}, we may choose at most countably
many disjoint balls $\left\{ B_{x_m}(r_{x_{m}}) \right\} _{m\geq 1}$ with
$x_{m}=\gamma \left( t_{m}\right),$ $t_{m}\in \left[\frac{4}{3}\,R,\frac{5}{3}\,R\right],$ 
each satisfying (\ref{l17}). Moreover, these balls cover at least one third
of the geodesic $\gamma \left( \left[ \frac{4}{3}\,R,\frac{5}{3}\,R\right] \right).$
Therefore, 
\begin{equation*}
\sum_{m\geq 1}r_{x_{m}} \geq \frac{1}{3}\left( \frac{5}{3}R-\frac{4}{3}R\right) 
=\frac{1}{9}R.
\end{equation*}%
Together with (\ref{l17}) we have 
\begin{equation*}
\frac{1}{9}R \leq \sum_{m\geq 1}r_{x_{m}} 
\leq C_{3}\sum_{m\geq 1}\int_{B_{x_m}\left(r_{x_{m}}\right) }\sigma ^{\frac{n-1}{q}} 
\leq C_{3}\int_{B_z\left(R\right) }\sigma ^{\frac{n-1}{q}},
\end{equation*}%
where for the last inequality we have used (\ref{l8.0}) and that the balls $\left\{ B_{x_m}(r_{x_{m}}) \right\} _{m\geq 1}$ are disjoint in $B_{z}(R)$.

Combining this with (\ref{l8}) and (\ref{l5.1}) we conclude that
\begin{equation*}
\frac{1}{9C_{3}}R \leq \int_{E\left( 3R\right) \backslash E\left( R\right)
}\sigma ^{\frac{n-1}{q}} 
\leq \frac{C_{2}}{k}\frac{\mathrm{V}_p\left(3R\right) }{R^{n-1}}.
\end{equation*}%
In other words,
\begin{equation*}
\mathrm{V}_p(3R) \geq \frac{k}{C_{4}}\,R^{n},
\end{equation*}%
which contradicts (\ref{l3.1}) if $k>2\,\mathrm{V}_{\infty}\,C_4\,3^n.$ 
This proves the theorem.
\end{proof}

For a shrinking gradient Ricci soliton, the asymptotic volume ratio $\mathrm{V}_{\infty}$ is always finite.
By Li-Wang \cite{LW}, the following Sobolev inequality holds for dimension $n\geq 3.$

\begin{equation*}
\left( \int_{M}\phi ^{\frac{2n}{n-2}}\right) ^{\frac{n-2}{n}}\leq 
C(n)\,e^{-\frac{2\mu \left( g\right) }{n}}\int_{M}\left( \left\vert \nabla \phi
\right\vert ^{2}+S\phi ^{2}\right) 
\end{equation*}%
provided $\phi \in C_{0}^{\infty }(M).$ This implies Theorem \ref{T1intr}.

\begin{corollary}
Let $( M,g) $ be a gradient shrinking Ricci soliton with $\alpha<\infty,$ where
\begin{equation*}
\alpha =\limsup_{R\rightarrow \infty } \frac{1}{\mathrm{V}_{p}(R) }
\int_{B_{p}\left( R\right)}\left( S\,r^2\right) ^{\frac{n-1}{2}}.
\end{equation*}
Then the number of ends of $M$ is bounded
from above by $\Gamma( n,\alpha ,\mu (g)),$ a constant depending only on
dimension $n,$ $\mu(g) $ and $\alpha.$ 
\end{corollary}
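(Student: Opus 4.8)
The plan is to obtain this corollary as a direct application of Theorem \ref{T2intr} with the choice $\sigma = S$, the scalar curvature of $M$. Three things must be checked: that the Sobolev hypothesis of Theorem \ref{T2intr} holds with a constant $A$ depending only on $n$ and $\mu(g)$; that the quantity $\alpha$ appearing there coincides with the one in the statement; and that $V_\infty$ is finite.

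For the first point, recall the Li-Wang Sobolev inequality \cite{LW}: for a gradient shrinking Ricci soliton of dimension $n\geq 3$,
\[
\left(\int_M \phi^{\frac{2n}{n-2}}\right)^{\frac{n-2}{n}}\leq C(n)\,e^{-\frac{2\mu(g)}{n}}\int_M\left(\left\vert\nabla\phi\right\vert^2 + S\,\phi^2\right)
\]
for all $\phi\in C_0^\infty(M)$. This is exactly the Sobolev inequality required in Theorem \ref{T2intr} with $A = C(n)\,e^{-2\mu(g)/n}$ and $\sigma = S$. By \cite{Ch}, $S\geq 0$ on $M$, and $S$ vanishes identically only for the Gaussian soliton on Euclidean space, which has a single end; so in the remaining cases $S$ is a nonnegative (smooth) function, as needed. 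For the second point, substituting $\sigma=S$ into the definition of $\alpha$ in Theorem \ref{T2intr} --- whose exponent $\frac{n-1}{2}$ corresponds to the choice $q=2$ in the general Sobolev inequality (\ref{l1}) --- gives precisely
\[
\alpha = \limsup_{R\to\infty}\frac{1}{\mathrm{V}_p(R)}\int_{B_p(R)}\left(S\,r^2\right)^{\frac{n-1}{2}},
\]
which is finite by hypothesis. For the third point, a gradient shrinking Ricci soliton automatically has finite asymptotic volume ratio: by \cite{CZ, HM} (see (\ref{vintr})) one has $\mathrm{V}_p(r)\leq c(n)\,r^n$ for all $r$, hence $V_\infty = \limsup_{R\to\infty}\mathrm{V}_p(R)/R^n\leq c(n)<\infty$.

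With these three facts in hand, Theorem \ref{T2intr} bounds the number of ends of $M$ by $\Gamma(n,A,\alpha,V_\infty) = \Gamma\bigl(n,\,C(n)e^{-2\mu(g)/n},\,\alpha,\,c(n)\bigr)$, a quantity depending only on $n$, $\mu(g)$ and $\alpha$, which is the asserted estimate $\Gamma(n,\alpha,\mu(g))$. There is no real obstacle in the argument; the only delicate point is the case $n=2$, which is not covered by Theorem \ref{T2intr} for lack of the appropriate Sobolev inequality. In that dimension, however, a complete gradient shrinking Ricci soliton is, after normalization, a quotient of the round sphere or the Gaussian soliton on $\mathbb{R}^2$, so it has at most one end and the bound holds trivially.
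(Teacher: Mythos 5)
Your proof is correct and follows essentially the same route as the paper: apply the Li--Wang Sobolev inequality with $\sigma = S$ and $A = C(n)e^{-2\mu(g)/n}$, use the standard volume bound $\mathrm{V}_p(r)\leq c(n)r^n$ for shrinkers to get $V_\infty<\infty$, and invoke Theorem \ref{T2intr}. The brief remark on the $n=2$ case (classification of two-dimensional shrinkers) is a sensible addition that the paper leaves implicit by stating the Li--Wang inequality only for $n\geq 3$.
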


For a submanifold $M$ in Euclidean space $\mathbb{R}^N,$ the well known Michael-Simon inequality \cite{A, MS}  states that
\begin{equation*}
\left( \int_{M}\left\vert \phi \right\vert^{\frac{n}{n-1}}\right) ^{\frac{n-1}{n}}\leq
C(n)\int_{M}\left( \left\vert \nabla \phi \right\vert +\left\vert
H\right\vert \left\vert \phi \right\vert\right)  
\end{equation*}%
for any $\phi \in C_{0}^{\infty }(M),$ where $H$ is the mean curvature vector of $M.$
By Theorem 2.1, we have the following conclusion.

\begin{corollary}
Let $M^n$ be a complete submanifold of $\mathbb{R}^N$ with $n\geq 2.$
Suppose
\begin{equation*}
\tilde{\alpha}=\limsup_{R\rightarrow \infty }\frac{1}{\mathrm{V}_{p}(R) }\int_{B_p\left(R\right) } \left(r\,\left\vert H\right\vert\right)^{n-1}<\infty 
\end{equation*}
and
\begin{equation*}
\mathrm{V}_{\infty }=\limsup_{R\rightarrow \infty }\frac{\mathrm{V}_p(R) }{R^{n}}<\infty. 
\end{equation*}
Then the number of ends of $M$ is bounded above by a constant $\Gamma$
depending only on the dimension $n,$ $\tilde{\alpha}$ and $V_{\infty }.$
\end{corollary}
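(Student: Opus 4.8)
The plan is to obtain the corollary as an immediate application of Theorem 2.1 with the exponent $q=1$. The first step is to recognize that the Michael--Simon inequality
\[
\left(\int_M |\phi|^{\frac{n}{n-1}}\right)^{\frac{n-1}{n}}\le C(n)\int_M\left(|\nabla\phi|+|H|\,|\phi|\right),\qquad \phi\in C_0^\infty(M),
\]
is exactly the Sobolev inequality (\ref{l1}) in the case $q=1$: with that choice $\frac{qn}{n-q}=\frac{n}{n-1}$, the right-hand side of (\ref{l1}) becomes $A\int_M(|\nabla\phi|+\sigma|\phi|)$, and one takes $\sigma=|H|$, a nonnegative continuous function on $M$, together with $A=C(n)$. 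The admissibility requirement $1\le q\le n-1$ in Theorem 2.1 holds for $q=1$ precisely because $n\ge 2$; this is the point at which the weaker dimension hypothesis $n\ge 2$ (in place of $n\ge 3$) is used, and it is why no separate treatment of surfaces is needed here.

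Next I would match the remaining hypotheses. Since $q=1$ and $\sigma=|H|$, the quantity $\alpha$ of (\ref{l2}) specializes to
\[
\alpha=\limsup_{R\to\infty}\frac{1}{\mathrm{V}_p(R)}\int_{B_p(R)}\left(r\,|H|\right)^{n-1}=\tilde\alpha,
\]
which is finite by hypothesis, and the quantity $V_\infty$ of (\ref{l3}) is literally the $V_\infty$ in the statement, again finite by hypothesis. If $M$ is compact it has no ends and there is nothing to prove, so we may assume $M$ is complete and noncompact. Theorem 2.1 then applies and gives that the number of ends of $M$ is at most $\Gamma$, where $\Gamma$ depends only on $n$, $A=C(n)$, $\alpha=\tilde\alpha$ and $V_\infty$. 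As $C(n)$ depends on nothing but $n$, the bound depends only on $n$, $\tilde\alpha$ and $V_\infty$, as claimed.

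I do not expect any real obstacle; the only points that need checking are routine: that $|H|$ is continuous, so that $\sigma=|H|$ is an admissible weight in (\ref{l1}); and that the Michael--Simon constant $C(n)$ --- hence the final $\Gamma$ --- carries no dependence on the codimension $N$ nor on the particular immersion, which is classical. For completeness I would also note that substituting $\phi^{\frac{2(n-1)}{n-2}}$ for $\phi$ in the Michael--Simon inequality and applying H\"older's inequality yields the $L^2$-type Sobolev inequality $\left(\int_M\phi^{\frac{2n}{n-2}}\right)^{\frac{n-2}{n}}\le C(n)\int_M(|\nabla\phi|^2+|H|^2\phi^2)$, which corresponds to $q=2$ in (\ref{l1}); that route, however, requires $n\ge 3$, so the $q=1$ formulation above is the one to use and is what makes the case $n=2$ go through.
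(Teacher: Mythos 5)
Your proof is correct and follows precisely the same route as the paper: the corollary is obtained by applying the general version of Theorem 2.1 in Section 2 with $q=1$, $\sigma=|H|$, and $A=C(n)$, the Michael--Simon inequality supplying (\ref{l1}) directly. Your observation that $q=1$ satisfies the admissibility range $1\le q\le n-1$ exactly when $n\ge 2$ is the same reason the paper formulates Theorem 2.1 with a general exponent $q$ rather than relying on the $L^2$ version stated in the introduction.
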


Recall that a hypersurface $M\subset \mathbb{R}^{n+1}$ is a self shrinker of
the mean curvature flow if it satisfies the equation 
\begin{equation*}
H=\frac{1}{2}\left\langle x,\mathbf{n}\right\rangle ,
\end{equation*}%
where $x$ is the position vector, $H$ the mean curvature and $\mathbf{n}$
the unit normal vector. Self shrinkers arise naturally in the singularity
analysis of mean curvature flow. In fact, it follows from the monotonicity
formula of Huisken \cite{H1} that tangent flows at singularities of the mean
curvature flow are self shrinkers. Many examples have been constructed by
gluing methods by Kapouleas, Kleene, and M\"{o}ller in \cite{KKM} and Nguyen
in \cite{Nu}.

A self shrinker $M$ is asymptotically
conical if there exists a regular cone $\mathcal{C}\subset \mathbb{R}^{n+1}$
with vertex at the origin such that the rescaled submanifold $\lambda M$
converges to $\mathcal{C}$ locally smoothly as $\lambda \rightarrow 0.$ By a
theorem of Wang \cite{W}, the limiting cone $\mathcal{C}$ uniquely
determines the shrinker $M$.

For an asymptotically conical shrinker, clearly both $\tilde{\alpha}$ and
$\mathrm{V}_{\infty }$ are finite.

\begin{corollary}
Assume that $M^{n}\subset \mathbb{R}^{n+1}$ is an
asymptotically conical self shrinker of the mean curvature flow of dimension 
$n\geq 2.$ Then the number of ends $e( M) \leq \Gamma( n, \mathrm{V}_{\infty},\tilde{\alpha}),$ 
where $\tilde{\alpha}$ is defined in Corollary 2.3.
\end{corollary}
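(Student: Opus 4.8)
The plan is to derive this directly from Corollary 2.3, so everything reduces to verifying its two hypotheses for an asymptotically conical self shrinker $M^{n}\subset\mathbb{R}^{n+1}$: namely that $V_{\infty}=\limsup_{R\to\infty}\mathrm{V}_{p}(R)/R^{n}$ and $\tilde\alpha=\limsup_{R\to\infty}\mathrm{V}_{p}(R)^{-1}\int_{B_{p}(R)}\left(r\,|H|\right)^{n-1}$ are both finite, where $r(x)=d(p,x)$ is the \emph{intrinsic} distance on $M$ and $H$ the scalar mean curvature.

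First I would record the comparison between the intrinsic distance $r(x)=d(p,x)$ and the extrinsic distance $|x|$ on the conical end. Since $\lambda M\to\mathcal{C}$ in $C^{\infty}_{loc}(\mathbb{R}^{n+1}\setminus\{0\})$ as $\lambda\to 0$, for $\rho$ large the annular piece $M\cap(B_{2\rho}\setminus B_{\rho})$ is, after rescaling by $\rho^{-1}$, a small $C^{1}$ graph over the fixed compact piece $\mathcal{C}\cap(B_{2}\setminus B_{1})$; hence its intrinsic diameter is at most $C\rho$, and chaining over dyadic annuli gives $d(p,x)\le C|x|$ for $|x|$ large, while $d(p,x)\ge |x|-|p|$ is automatic because chords in $\mathbb{R}^{n+1}$ are shorter than curves in $M$. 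Thus $r(x)$ and $|x|$ are comparable outside a compact set $\Omega$. The bound $V_{\infty}<\infty$ then follows: the cone metric satisfies $\mathrm{Vol}(\mathcal{C}\cap B_{\rho})=\tfrac{1}{n}\mathrm{Area}(\Sigma)\,\rho^{n}$, so by the same convergence $\mathrm{Vol}(M\cap B_{\rho})\le c\rho^{n}$, and combined with the distance comparison $\mathrm{V}_{p}(R)\le cR^{n}$ (alternatively, for any shrinker the asymptotic volume ratio is finite by a classical estimate).

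Next I would bound $r\,|H|$ via the scale invariance of $|A|^{2}|x|^{2}$, where $A$ is the second fundamental form. Under $M\mapsto\lambda M$ one has $|A_{\lambda M}|(\lambda x)=\lambda^{-1}|A_{M}|(x)$; fixing large $s=|x|$ and taking $\lambda\sim s^{-1}$, the point $\lambda x$ lies in $B_{2}\setminus B_{1}$, and $C^{2}_{loc}$-convergence of $\lambda M$ to $\mathcal{C}$ there together with the cone bound $|A_{\mathcal{C}}|(y)\le C_{\Sigma}/|y|$ gives $|A_{\lambda M}|(\lambda x)\le 2C_{\Sigma}$ once $\lambda$ is small; hence $|A_{M}|(x)=\lambda|A_{\lambda M}|(\lambda x)\le 3C_{\Sigma}/|x|$ for $|x|$ large. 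Since $|H|\le\sqrt{n}\,|A|$ (equivalently, directly from $H=\tfrac12\langle x,\mathbf n\rangle$ and the asymptotic structure of the end), this yields $|x|\,|H|\le C$ on the end, and therefore $r\,|H|\le C'$ on all of $M$ using the distance comparison and compactness of $\Omega$. Consequently $(r\,|H|)^{n-1}$ is a bounded function on $M$, so $\int_{B_{p}(R)}(r\,|H|)^{n-1}\le C''\mathrm{V}_{p}(R)$ and $\tilde\alpha\le C''<\infty$.

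With both $V_{\infty}$ and $\tilde\alpha$ finite, Corollary 2.3 applies verbatim and gives $e(M)\le\Gamma(n,\tilde\alpha,V_{\infty})$, which is the claim; for $n=2$ this is just the $q=1$ instance of Theorem 2.1, whose proof covers the full range $1\le q\le n-1$. The only inputs beyond Corollary 2.3 are the distance comparison and the curvature decay $r|A|\le C$ on the conical end, and I expect the main (quite mild) obstacle to be the careful bookkeeping of the rescalings in these two points rather than any substantive difficulty.
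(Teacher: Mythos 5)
Your argument is correct, and it is precisely the verification the paper takes for granted when it writes that ``for an asymptotically conical shrinker, clearly both $\tilde{\alpha}$ and $\mathrm{V}_{\infty}$ are finite'' before invoking Corollary 2.3. The paper gives no details; you supply them (intrinsic--extrinsic distance comparison on the conical end, Euclidean volume growth, the scale-invariant decay $|A|(x)\le C/|x|$ forcing $r\,|H|$ bounded), and your remark about the $n=2$ case being covered by the $q=1$ instance of Theorem 2.1 matches the paper's own caveat that the $n=2$ case follows from the more general $L^{q}$ Sobolev version of the argument rather than from the $L^{2}$ statement of Theorem \ref{T2intr}.
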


We would also like mention a recent result of Sun-Wang \cite{SW} which
bounds $e(M) $ in terms of the entropy and genus when $n=2.$ 

\section{Ends and solutions to Schr\"odinger equations\label{Ends}}

In this section we prove Theorem \ref{Eintr}. The standing assumption in
this section is that $M$ is complete and that $\sigma$ is a nonnegative, but
not identically zero, smooth function on $M.$

We first recall an interior gradient estimate for positive solution $u$ of $%
\Delta u=\sigma u$ established by Cheng and Yau (see Theorem 6 in \cite{CY}).

\begin{lemma}
\label{CY} Suppose $u>0$ is a solution to $\Delta u=\sigma u$ on the geodesic
ball $B_p(2r)$ centered at $p\in M$ and of radius $2r.$ Then 
\begin{equation*}
\left\vert \nabla \ln u\right\vert \leq C(r) \text{ on } B_p(r),
\end{equation*}
where $C(r)$ is a constant depending on $r,$ $\sigma$ and the Ricci
curvature lower bound of $M$ on $B_p(2r).$
\end{lemma}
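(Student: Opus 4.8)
The plan is to run the classical Cheng--Yau argument: pass to $v=\log u$, apply the Bochner formula, and localize with a distance cutoff, invoking the Ricci lower bound only through the Laplacian comparison theorem.

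First I would record the equation for $v=\log u$. Since $\Delta u=\sigma u$,
\begin{equation*}
\Delta v=\frac{\Delta u}{u}-|\nabla v|^{2}=\sigma-|\nabla v|^{2}\qquad\text{on }B_{p}(2r),
\end{equation*}
so it suffices to bound $Q:=|\nabla v|^{2}$ on $B_{p}(r)$. Applying the Bochner formula to $v$, inserting $\nabla\Delta v=\nabla\sigma-\nabla Q$, and using the Cauchy--Schwarz inequality $|\mathrm{Hess}\,v|^{2}\ge\frac1n(\Delta v)^{2}=\frac1n(\sigma-Q)^{2}$ together with $\mathrm{Ric}\ge-(n-1)K$ on $B_{p}(2r)$, I obtain
\begin{equation*}
\tfrac12\Delta Q\ \ge\ \tfrac1n(\sigma-Q)^{2}+\langle\nabla v,\nabla\sigma\rangle-\langle\nabla v,\nabla Q\rangle-(n-1)K\,Q .
\end{equation*}

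Next I would choose $\phi=\psi(r(\cdot))$ with $\psi$ smooth, nonincreasing, $\phi\equiv1$ on $B_{p}(r)$, $\mathrm{supp}\,\phi\subset B_{p}(2r)$, $|\nabla\phi|\le C_{1}/r$ and $|\psi''|\le C_{1}/r^{2}$. The Laplacian comparison theorem --- the \emph{only} place the Ricci lower bound is used --- gives $\Delta r\le (n-1)\sqrt K\coth(\sqrt K\,r)$, hence $\Delta\phi=\psi''|\nabla r|^{2}+\psi'\Delta r\ge -C_{2}(n,K,r)$, understood in the barrier sense so that Calabi's trick handles the cut locus of $p$. Then I would study $G:=\phi^{2}Q$; if its maximum over $\overline{B_{p}(2r)}$ is $0$ the claim is trivial on $B_{p}(r)$, so let $x_{0}\in B_{p}(2r)$ be a maximum point with $G(x_{0})>0$, hence $\phi(x_{0})>0$. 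At $x_{0}$ the condition $\nabla G=0$ gives $\nabla Q=-\frac{2Q}{\phi}\nabla\phi$ and $\Delta G\le 0$. Expanding $\Delta G=\phi^{2}\Delta Q+Q\Delta(\phi^{2})+2\langle\nabla(\phi^{2}),\nabla Q\rangle$, substituting this expression for $\nabla Q$, inserting the Bochner inequality, bounding the cross terms by Cauchy--Schwarz (using $(\sigma-Q)^{2}\ge Q^{2}-2Q\sup_{B_{p}(2r)}\sigma$ and $|\langle\nabla v,\nabla\sigma\rangle|\le Q^{1/2}\sup_{B_{p}(2r)}|\nabla\sigma|$), and finally multiplying through by $\phi^{2}$ and using $\phi\le1$ together with $\phi^{3}Q^{3/2}=G^{3/2}$, I arrive at an inequality of the shape
\begin{equation*}
\tfrac2n\,G(x_{0})^{2}\ \le\ C_{3}\Bigl(G(x_{0})^{3/2}+G(x_{0})+G(x_{0})^{1/2}\Bigr),
\end{equation*}
with $C_{3}$ depending only on $n$, $r$, $K$, $\sup_{B_{p}(2r)}\sigma$ and $\sup_{B_{p}(2r)}|\nabla\sigma|$. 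This forces $G(x_{0})\le C_{4}$ with the same dependence, and since $\phi\equiv1$ on $B_{p}(r)$ we conclude $|\nabla\log u|^{2}=Q=G\le C_{4}$ there, i.e.\ the claim with $C(r)=C_{4}^{1/2}$.

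I expect the only genuine subtlety to be the use of the Laplacian comparison theorem at the cut locus of $p$, where $\phi$ is merely Lipschitz; this is handled in the standard way by Calabi's barrier argument, and it is precisely here that one checks a one-sided bound on $\mathrm{Ric}$ suffices and no two-sided curvature bound is needed. The term $\langle\nabla v,\nabla\sigma\rangle$ arising from differentiating the equation is harmless since $\sigma$ is smooth and $\overline{B_{p}(2r)}$ is compact (as $M$ is complete), so $\sup_{\overline{B_{p}(2r)}}|\nabla\sigma|<\infty$; this is why $C(r)$ is allowed to depend on $\sigma$ rather than merely on $\sup\sigma$. The remaining steps are routine algebra at the maximum point.
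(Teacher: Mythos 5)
Your proof is correct and is precisely the classical Cheng--Yau gradient estimate argument; the paper itself does not prove this lemma but simply cites Theorem~6 of Cheng--Yau \cite{CY}, which is established by the very same logarithm/Bochner/cutoff/maximum-principle scheme you carry out. The one point worth flagging is that you rely on $\sup_{B_p(2r)}|\nabla\sigma|$ in addition to $\sup_{B_p(2r)}\sigma$; this is harmless here because $\sigma$ is assumed smooth and the lemma's constant is permitted to depend on $\sigma$ rather than merely on a pointwise bound, but it is the reason the dependence on $\sigma$ in the statement cannot be weakened to dependence on $\|\sigma\|_{\infty}$ alone without a separate argument.
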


In particular, the lemma implies that on any compact subset $K$ of $B_p(r)$%
, the Harnack inequality $u(x)\leq C(K)\,u(y)$ holds for $x, y\in K$ with a
constant $C(K)$ independent of $u.$

We now construct nontrivial solutions of the equation $\Delta u=\sigma u$
when $M$ has more than one end. In contrast to \cite{LT}, there is no need
to distinguish the two cases of $M$ being parabolic or nonparabolic.

\begin{theorem}
\label{E} Let $\left( M,g\right) $ be a complete manifold and $E_{1},
E_{2},\cdots, E_{l}$ the ends of $M$ with respect to the geodesic ball $%
B_p(r_0)$ with $l\geq 2.$ Then for each end $E_i,$ there exists a positive
solution $u_i$ to the equation $\Delta u_i=\sigma u_i $ on $M$ satisfying $%
0<u_i\leq 1$ on $M\backslash E_i$ and 
\begin{equation*}
\sup_{M}u_i=\limsup_{x\rightarrow E_i\left( \infty \right) }u_i\left(
x\right) >1.
\end{equation*}
Moreover, the functions $u_{1}, \cdots, u_{l}$ are linearly independent.
\end{theorem}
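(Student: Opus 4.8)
The construction is a variant of Li-Tam's exhaustion argument, adapted so that every function produced is positive (which is automatic here since $\sigma \geq 0$ and $\sigma \not\equiv 0$, so the operator $L = \Delta - \sigma$ admits a positive supersolution, namely the constant $1$, and satisfies a maximum principle). Fix an end $E = E_i$ and an exhaustion $\Omega_1 \subset \Omega_2 \subset \cdots$ of $M$ by smooth compact domains with $B_p(r_0) \subset \Omega_1$. On each $\Omega_k$ I would solve the Dirichlet problem
\begin{equation*}
\Delta u_{i,k} = \sigma\, u_{i,k} \text{ on } \Omega_k, \qquad u_{i,k} = 1 \text{ on } \partial\Omega_k \cap \overline{E}, \qquad u_{i,k} = 0 \text{ on } \partial\Omega_k \setminus \overline{E}.
\end{equation*}
By the maximum principle $0 \le u_{i,k} \le 1$ on $\Omega_k$, and a further comparison on $\Omega_{k+1}\setminus \Omega_k$ (using that $u_{i,k+1}$ restricted to $\partial\Omega_k$ is some function between $0$ and $1$, while $u_{i,k}$ is obtained with boundary data that dominates it on the "$E$-side" and agrees at $0$ on the other side — here one must be a little careful and instead compare via the structure of the ends) shows that the sequence $\{u_{i,k}\}$ is monotone, or at least locally uniformly bounded. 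The interior Harnack/gradient estimate of Lemma \ref{CY} then gives local $C^1$ bounds independent of $k$, so a subsequence converges in $C^1_{loc}$ to a solution $u_i$ of $\Delta u_i = \sigma u_i$ on $M$ with $0 \le u_i \le 1$ everywhere.

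**Normalization and the strict inequalities.** The delicate point is to arrange $0 < u_i \le 1$ on $M \setminus E_i$ with $\sup_M u_i = \limsup_{x \to E_i(\infty)} u_i > 1$, rather than a trivial limit $u_i \equiv 0$ or $u_i \equiv 1$. Following Li-Tam, the right move is to \emph{not} prescribe $u_{i,k} = 1$ on a shrinking portion of $\partial\Omega_k$ but rather to set $u_{i,k}$ equal to $1$ on $\partial\Omega_k \cap \overline{E_i}$ and $0$ on the rest, and then \emph{renormalize}: replace $u_{i,k}$ by $u_{i,k}/\beta_k$ where $\beta_k = \sup_{\partial B_p(r_0)\cap E_i \text{-side}} u_{i,k}$ or a similar fixed normalization quantity, chosen so that the renormalized sequence has a fixed positive value at a fixed reference point or reference set, guaranteeing the limit is neither $0$ nor bounded by $1$ on all of $M$. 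Since $\sigma \not\equiv 0$, the solution cannot be constant, so once $u_i \le 1$ on $M\setminus E_i$ and $u_i$ is nonconstant, the strong maximum principle forces $u_i < 1$ in the interior of $M \setminus E_i$ unless... — one verifies $u_i$ is strictly positive everywhere by Harnack (if $u_i$ vanished somewhere it would vanish identically), and $\sup_M u_i > 1$ because otherwise $u_i \le 1$ on all of $M$ with $u_i$ nonconstant and $\Delta u_i = \sigma u_i \ge 0$, i.e. $u_i$ subharmonic and bounded, which combined with attaining a value close to its sup away from infinity would contradict the maximum principle; the supremum must therefore be approached only along $E_i(\infty)$, giving $\sup_M u_i = \limsup_{x\to E_i(\infty)} u_i(x) > 1$.

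**Linear independence.** This is the easy part. Suppose $\sum_{i=1}^l c_i u_i \equiv 0$. Each $u_i$ satisfies $u_i \le 1$ on $M \setminus E_i$ while $\limsup_{x \to E_i(\infty)} u_i(x) > 1$, and for $j \ne i$ the function $u_j$ is bounded by $1$ along $E_i(\infty)$ (since $E_i(\infty) \subset M \setminus E_j$). Pushing $x \to E_i(\infty)$ along a sequence realizing $\sup_M u_i$ and using the boundedness of the other $u_j$ there, one isolates the coefficient $c_i$: more precisely, evaluate the relation $\sum_j c_j u_j = 0$ along such a sequence and take $\limsup$; since $u_i$ escapes above $1$ while the others stay $\le 1$, a standard argument (as in \cite{LT}, using that the $u_i$ have mutually "disjoint" growth behavior at the ends) forces $c_i = 0$ for every $i$. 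Hence $u_1, \dots, u_l$ are linearly independent.

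**Main obstacle.** I expect the genuine difficulty to be the normalization step that prevents degeneration of the limit: one must choose the boundary data and the normalizing constants $\beta_k$ carefully so that the monotone (or subsequential) limit is a \emph{nontrivial} positive solution with exactly the claimed behavior $0 < u_i \le 1$ off $E_i$ and $\sup_M u_i > 1$ attained only at $E_i(\infty)$. In the classical Li-Tam setting this forces the dichotomy between parabolic and nonparabolic ends; the point emphasized in the paper is that the presence of $\sigma \ge 0$, $\sigma \not\equiv 0$ removes that dichotomy because $L$-harmonic functions enjoy a genuine (not just on compact sets) maximum principle against the constant $1$, so the renormalized limits cannot collapse. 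Making this collapse-prevention rigorous — i.e. a quantitative lower bound on $\beta_k$ or on $u_{i,k}$ at a fixed point, uniform in $k$ — is the crux.
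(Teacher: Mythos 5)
Your construction of the $u_i$ follows the paper's approach closely: exhaust by compact domains, solve the Dirichlet problem with boundary data $1$ on the $E_i$-side and $0$ elsewhere, renormalize by the max on a fixed compact set (the paper takes $C_R = (\max_{B_p(r_0)} v_R)^{-1}$ and enforces $\max_{B_p(r_0)} u_R = 1$), and pass to the limit via the Cheng--Yau gradient estimate. Your identification of the normalization as the mechanism that prevents collapse is correct, and the strict monotonicity of $r \mapsto \sup_{\partial E(r)} u_R$ (from the strong maximum principle together with the vanishing boundary data on the $F$-side) is exactly how the paper gets $0 < u_i \le 1$ on $M\setminus E_i$ and $\sup_M u_i > 1$ attained only along $E_i(\infty)$.

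The genuine gap is in the linear independence argument, which you call ``the easy part.'' Taking a sequence $x_k \to E_i(\infty)$ with $u_i(x_k) \to \sup_M u_i = S_i > 1$ and passing to the limit in $\sum_j c_j u_j(x_k) = 0$ only yields $|c_i|\,S_i \le \sum_{j\ne i}|c_j|$, because you merely know $u_j \le 1$ on $E_i$ for $j\ne i$; this does not force $c_i = 0$ unless $S_i = \infty$. When $u_i$ is bounded on $E_i$ (which is entirely possible), ``escaping above $1$ while the others stay $\le 1$'' gives only a finite inequality, not vanishing. The paper resolves this with a crucial additional step: by running the maximum principle against the approximating solutions $u_{j,R}$ on the annular region $F_j(R)\setminus F_j(r_0)$, one proves the comparison
\begin{equation*}
u_j \le C_i\,(S_i - u_i) \quad \text{on } E_i, \qquad j \ne i,
\end{equation*}
where $C_i$ is chosen so that $S_i - u_i > 1/C_i$ on $B_p(r_0)$. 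This inequality is what actually forces $u_j(x_k) \to 0$ for every $j\ne i$ along the sequence realizing $S_i$, so that the limit of $\sum_j c_j u_j(x_k) = 0$ becomes $c_i S_i = 0$, hence $c_i = 0$. (The paper also dispatches the complementary case $S_i = \infty$ separately, where your naive argument does work.) Without this comparison inequality --- or an equivalent quantitative decay of $u_j$ as $u_i$ approaches its supremum --- the linear independence does not follow, so this step should not be waved off as standard; it is the place where the specific structure of the approximating sequence $u_{i,R}$ (vanishing boundary data on $\partial F_i(R)$) is used a second time, beyond the construction itself.
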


\begin{proof}
We first construct the functions $u_i.$ To ease notation, let $E=E_i$ and $%
F=F_i=M\backslash E_i.$ As $l\geq 2,$ $F$ must be unbounded. For $R\geq r_0,$
denote $E(R) =E\cap B_p(R)$ and $F( R) =F\cap B_p( R).$ Let $v_{R}: B_p( R)
\rightarrow \mathbb{R}$ be the solution of the Dirichlet problem 
\begin{align*}
\Delta v_{R} &=\sigma v_{R}\text{ \ in }B_p(R) \\
v_{R} &=0\text{ \ on }\partial F( R) \\
v_{R} &=1\text{ on }\partial E( R).
\end{align*}%
Since $\sigma \geq 0$ on $M,$ by the strong maximum principle, it follows
that $0<v_{R}<1$ in $B_p( R).$ We now normalize $v_{R}$ by setting 
\begin{equation*}
u_{R}=C_{R}\,v_{R},
\end{equation*}%
where 
\begin{equation*}
C_{R}=\left( \max_{B_p( r_{0}) }v_{R}\right) ^{-1}>1.
\end{equation*}%
Then $u_{R}$ is a solution of 
\begin{align*}
\Delta u_{R} &=\sigma u_{R}\text{ \ in }B_p( R) \\
u_{R} &=0\text{ \ on }\partial F( R) \\
u_{R} &=C_{R}\text{ on }\partial E( R).
\end{align*}%
In addition, 
\begin{equation}
\max_{B_p\left( r_{0}\right) }u_{R}=1.  \label{m7}
\end{equation}%
Hence, by Lemma \ref{CY} and the remark following it, we conclude from (\ref%
{m7}) that for any fixed $0<r<\frac{R}{2},$ 
\begin{equation*}
\sup_{B_p( r) }u_{R}\leq C\left(r\right)
\end{equation*}%
and 
\begin{equation*}
\sup_{B_p( r) }\left\vert \nabla u_{R}\right\vert \leq C(r),
\end{equation*}%
where $C\left( r\right) $ is a constant independent of $R.$ It is now easy
to see that a subsequence of $u_{R}$ converges to a solution $u>0$ of $%
\Delta u=\sigma u$ on $M.$ Note that $u$ can not be a constant function as $%
\sigma$ is not identically $0.$

Since $u_{R}=0$ on $\partial F(R),$ the strong maximum principle implies
that $\sup_{\partial E\left( r\right) }u_{R}$ is strictly increasing in $r$
and $\sup_{\partial F\left( r\right) }u_{R}$ decreasing in $r.$ Therefore,
the same holds true for the function $u.$ In particular, by the fact that 
\begin{equation}
\max_{B_p\left( r_{0}\right) }u=1,
\end{equation}%
one concludes that $0<u\leq 1$ on $F=M\backslash E$ and 
\begin{equation*}
\sup_{M}u=\limsup_{x\rightarrow E\left( \infty \right) }u\left( x\right) >1.
\end{equation*}
This finishes our construction of the function $u_i.$

We now turn to prove that the functions $u_{1}, \cdots, u_{l}$ are linearly
independent. Assume that 
\begin{equation}
\sum_{j=1}^{l}a_{j}u_{j}=0  \label{m20}
\end{equation}%
for some constants $a_{j}\in \mathbb{R}.$ For an arbitrary but fixed $j,$ if 
$u_{j}$ is unbounded on $E_{j},$ then clearly $a_{j}=0$ as $u_{i}$ is
bounded on $E_{j}$ for all $i\neq j.$

So we may assume from here on that each $u_{j}$ is bounded on $E_{j}.$ Let 
\begin{equation*}
S_{j}=\sup_{E_{j}}u_{j}>1.
\end{equation*}%
Then there exists a sequence $x_{j,k}\in E_{j}$ such that 
\begin{equation}
\lim_{k\rightarrow \infty }\left( S_{j}-u_{j}\right) \left( x_{j,k}\right)
=0 .  \label{m21}
\end{equation}%
Note that $S_{j}-u_{j}>0$ on $M.$ In particular, there exists a constant $%
C_{j}>0$ satisfying $S_{j}-u_{j}>\frac{1}{C_{j}}$ on $B_p(r_{0}).$ We now
claim that for $i\neq j,$ 
\begin{equation}
u_{i}\leq C_{j}\left( S_{j}-u_{j}\right)  \label{m22}
\end{equation}%
on $E_{j}.$

Indeed, recall from the construction that $u_{i}$ is the limit of a
subsequence of $u_{i,R}$ satisfying 
\begin{align*}
\Delta u_{i,R} &=\sigma u_{i,R}\text{ \ in }B_p( R) \\
u_{i,R} &=0\text{ \ on }\partial F_{i}( R) \\
u_{i,R} &=C_{i,R}\text{ on }\partial E_{i}( R),
\end{align*}%
where $F_{i}=M\backslash E_{i},$ together with 
\begin{equation*}
\max_{B_p\left( r_{0}\right) }u_{i,R}=1.
\end{equation*}%
Now the function 
\begin{equation*}
w_{i,R}=u_{i,R}-C_{j}\left( S_{j}-u_{j}\right)
\end{equation*}%
satisfies $\Delta w_{i,R}\geq 0$ on $F_{i}( R) \setminus F_{i}( r_{0})$ as $%
\sigma\geq 0.$ Also, $w_{i,R}<0$ on $\partial F_{i}( R) \cup \partial F_{i}(
r_{0}).$ By the maximum principle, $w_{i,R}<0$ on $F_{i}( R) \backslash
F_{i}\left( r_{0}\right).$ After taking limit, one concludes that $u_i\leq
C_{j}\left( S_{j}-u_{j}\right)$ on $F_{i} \backslash F_{i}\left(
r_{0}\right).$ Since $i\neq j$ and $E_j\subset F_{i} \backslash F_{i}\left(
r_{0}\right),$ the claim follows.

By (\ref{m21}) and (\ref{m22}) it follows that 
\begin{equation*}
\lim_{k\rightarrow \infty }u_{i}\left( x_{j,k}\right) =\left\{ 
\begin{array}{c}
0 \\ 
S_{j}%
\end{array}%
\right. 
\begin{array}{c}
\text{if }i\neq j \\ 
\text{if }i=j.%
\end{array}%
\end{equation*}%
Plugging this into (\ref{m20}), one infers that $a_{j}=0.$ But $j$ is
arbitrary. This proves that $u_{1}, \cdots, u_{l}$ are linearly independent.
\end{proof}

\section{Growth estimates\label{GR}}

Our focus in this section is on growth rate estimates for positive solutions
to $\Delta u=\sigma u.$ We fix a large enough positive constant $R_{0}$ and
assume that the manifold $M$ admits a proper function $\rho $ satisfying 
\begin{equation}
\frac{1}{2}\leq \left\vert \nabla \rho \right\vert \leq 1\text{ \ and \ }%
\Delta \rho \leq \frac{m}{\rho }  \label{L2}
\end{equation}%
in the weak sense for $\rho \geq R_{0},$ where $m$ is a positive constant.
Denote the sublevel and level set of $\rho $ by 
\begin{equation*}
D(r)=\left\{ x\in M:\rho \left( x\right) <r\right\} \text{\ and\ }\Sigma
(r)=\left\{ x\in M:\rho (x)=r\right\}
\end{equation*}%
respectively. They are compact as $\rho $ is proper.
Denote with $\mathrm{V}(r)$ the volume of $D(r)$ and with $\mathrm{A}(r)$ the area of $\Sigma(r)$. 

\begin{definition}
A manifold $\left( M,g\right) $ has the mean value property $\left( \mathcal{M}%
\right)$ if there exist constants $A_{0}>0$ and $\nu >1$ such that for any $%
0<\theta \leq 1$ and $R\geq 4R_0,$ 
\begin{equation}
\sup_{\Sigma \left( R\right) }u\leq \frac{A_{0}}{\theta ^{2\nu }}\frac{1}{%
\mathrm{V}( ( 1+\theta ) R) }\int_{D\left( \left( 1+\theta \right) R\right)
\backslash D\left( R_{0}\right) }u  \label{mean}
\end{equation}%
holds true for any function $u>0$ satisfying $\Delta u\geq \sigma u$ on $D(
2R)\backslash D( R_{0}).$
\end{definition}

We begin with a simple observation. Integrating by parts, one immediately
sees that for any $C^{1}$ function $w$ and $r\geq R_{0},$ 
\begin{equation*}
\int_{D\left( r\right) }w\Delta \rho +\int_{D\left( r\right) }\left\langle
\nabla w,\nabla \rho \right\rangle =\int_{\Sigma \left( r\right) }w\frac{%
\partial \rho }{\partial \eta }
\end{equation*}%
where $\eta$ is the unit normal vector to $\Sigma( r)$ given by $\eta=\frac{%
\nabla \rho }{\left\vert \nabla \rho \right\vert }.$ Taking a derivative in $%
r$ of this identity yields the following formula: 
\begin{equation}
\frac{d}{dr}\int_{\Sigma \left( r\right) }w\left\vert \nabla \rho
\right\vert =\int_{\Sigma \left( r\right) }\frac{\left\langle \nabla
w,\nabla \rho \right\rangle }{\left\vert \nabla \rho \right\vert }%
+\int_{\Sigma \left( r\right) }\frac{w\Delta \rho }{\left\vert \nabla \rho
\right\vert }\ .  \label{k0}
\end{equation}

The following lemma provides volume and area estimates.

\begin{lemma}
\label{Area} Let $A(r)$ be the area of $\Sigma(r)$ and $V(r)$ the volume of $%
D(r).$ Then 
\begin{align*}
\mathrm{A}( r) &\leq \frac{c(m)}{r}\mathrm{V}( r), \\
\mathrm{V}(( 1+\theta)r) &\leq (1+\theta ) ^{c( m)}\mathrm{V}(r), \\
\mathrm{V}(r) &\leq r^{\gamma( m)}\mathrm{V}(R_0)
\end{align*}%
for all $r\geq R_{0}$ and $0<\theta \leq 1,$ where $c(m)$ and $\gamma(m)$
depend only on $m$.
\end{lemma}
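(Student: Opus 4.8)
The three estimates are all consequences of the structure equation (\ref{k0}) applied to $w\equiv 1$, together with the differential inequality $\Delta\rho\le m/\rho$ and the pinching $\tfrac12\le|\nabla\rho|\le 1$. The plan is to first derive a differential inequality for $\mathrm{A}(r)$, then integrate it to compare $\mathrm{A}$ with $\mathrm{V}$ and to obtain the doubling and polynomial bounds on $\mathrm{V}$.

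\emph{Step 1: a first-variation formula and the key differential inequality.} Taking $w\equiv 1$ in (\ref{k0}) gives
\[
\frac{d}{dr}\int_{\Sigma(r)}|\nabla\rho| \;=\; \int_{\Sigma(r)}\frac{\Delta\rho}{|\nabla\rho|}\;\le\; \frac{m}{r}\int_{\Sigma(r)}\frac{1}{|\nabla\rho|}\;\le\;\frac{2m}{r}\,\mathrm{A}(r),
\]
where the last step uses $|\nabla\rho|\ge\tfrac12$. On the other hand, since $|\nabla\rho|\le 1$ we have $\int_{\Sigma(r)}|\nabla\rho|\le \mathrm{A}(r)$, while by the coarea formula $\mathrm{V}(r)=\int_{R_0}^{r}\big(\int_{\Sigma(t)}|\nabla\rho|^{-1}\big)\,dt$ up to the fixed error $\mathrm{V}(R_0)$, hence $\mathrm{V}'(r)=\int_{\Sigma(r)}|\nabla\rho|^{-1}$, which lies between $\mathrm{A}(r)$ and $2\mathrm{A}(r)$. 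Thus it suffices to track the quantity $B(r):=\int_{\Sigma(r)}|\nabla\rho|$, which satisfies $B'(r)\le \tfrac{2m}{r}B(r)\cdot\big(\mathrm{A}(r)/B(r)\big)$; since $B(r)\le\mathrm{A}(r)\le 2B(r)$, all three of $B$, $\mathrm{A}$, $\mathrm{V}'$ are comparable up to constants depending only on $m$, and $B$ satisfies $B'(r)\le \tfrac{4m}{r}B(r)$ in the weak sense.

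\emph{Step 2: integrate.} From $B'(r)\le \tfrac{4m}{r}B(r)$, Gronwall gives $B(r)\le (r/s)^{4m}B(s)$ for $R_0\le s\le r$. Combining with $\mathrm{V}'(r)\asymp B(r)$ and integrating in $r$: integrating $B(r)\le (r/s)^{4m}B(s)$ against the relation $\mathrm{V}(r)\ge\int_s^r B(t)\,dt$ and $\mathrm{V}'(s)\le 2B(s)$ yields $\mathrm{A}(r)\asymp B(r) \le \tfrac{c(m)}{r}\mathrm{V}(r)$ after noting $\mathrm{V}(r)\ge \int_{r/2}^r B(t)\,dt\ge c(m)^{-1}\,r\,B(r)$ by the power-law growth of $B$; this is the first inequality. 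For the doubling estimate, write $\log\mathrm{V}((1+\theta)r)-\log\mathrm{V}(r)=\int_r^{(1+\theta)r}\frac{\mathrm{V}'(t)}{\mathrm{V}(t)}\,dt$ and bound $\mathrm{V}'(t)/\mathrm{V}(t)=B(t)/\mathrm{V}(t)\cdot(\mathrm{V}'(t)/B(t))\le c(m)/t$ by the first inequality; integrating gives $\mathrm{V}((1+\theta)r)\le(1+\theta)^{c(m)}\mathrm{V}(r)$. Finally, iterating the doubling bound with $\theta=1$ from $R_0$ to $r$ (i.e. applying it $\lceil\log_2(r/R_0)\rceil$ times) produces $\mathrm{V}(r)\le (r/R_0)^{\gamma(m)}\mathrm{V}(R_0)\le r^{\gamma(m)}\mathrm{V}(R_0)$ after absorbing $R_0$ into the constant, the third inequality.

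\emph{Main obstacle.} The only genuinely delicate point is making the first-variation identity (\ref{k0}) and the coarea relation $\mathrm{V}'(r)=\int_{\Sigma(r)}|\nabla\rho|^{-1}$ rigorous when $\rho$ is merely Lipschitz and $\Delta\rho\le m/\rho$ holds only weakly: the level sets $\Sigma(r)$ need not be smooth, $\mathrm{A}(r)$ and $\mathrm{V}(r)$ are a priori only of bounded variation in $r$, and the derivative identities must be read distributionally (or for a.e.\ $r$, using Sard's theorem for the Lipschitz function $\rho$ and that $|\nabla\rho|\ge\tfrac12$ keeps the sets $\Sigma(r)$ of finite perimeter). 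Once one interprets all the ODE manipulations in the sense of distributions and uses the weak maximum principle / integration by parts valid for Lipschitz $\rho$ with the stated weak Laplacian bound, the Gronwall and integration steps go through verbatim; everything else is the routine calculus sketched above.
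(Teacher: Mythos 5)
Your proof is correct and follows essentially the same approach as the paper: both apply the identity (\ref{k0}) with $w\equiv 1$ to obtain the differential inequality $\frac{d}{dr}\int_{\Sigma(r)}|\nabla\rho|\le\frac{4m}{r}\int_{\Sigma(r)}|\nabla\rho|$, integrate it over $[r/2,r]$ to deduce $\mathrm{A}(r)\le\frac{c(m)}{r}\mathrm{V}(r)$, and then get the doubling and polynomial-growth bounds by integrating $\mathrm{V}'/\mathrm{V}\le c(m)/r$. The only cosmetic difference is that the paper fixes a single $t\in(r/2,r)$ by the mean-value theorem with $\int_{\Sigma(t)}|\nabla\rho|^{-1}\le\frac{2}{r}\mathrm{V}(r)$ and compares $B(r)$ with $B(t)$, whereas you lower-bound $\int_{r/2}^r B(t)\,dt$ directly via Gronwall; these amount to the same estimate.
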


\begin{proof}
By the co-area formula, there exists $\frac{r}{2}<t<r$ such that 
\begin{align}
\mathrm{V}( r) &\geq \mathrm{Vol}\left(D(r) \backslash D\left( \frac{r}{2}%
\right) \right)  \label{k3} \\
&=\frac{r}{2}\int_{\Sigma(t)}\frac{1}{\left\vert \nabla \rho \right\vert }. 
\notag
\end{align}%
From (\ref{L2}) we have 
\begin{equation*}
\Delta \rho \leq \frac{4m}{\rho}\left\vert \nabla \rho \right\vert ^{2}
\end{equation*}%
for all $r\geq R_{0}.$ Hence, applying (\ref{k0}) with $w=1$ implies 
\begin{align*}
\frac{d}{dr}\int_{\Sigma \left( r\right) }\left\vert \nabla \rho \right\vert
&=\int_{\Sigma \left( r\right) }\frac{\Delta \rho }{\left\vert \nabla \rho
\right\vert } \\
&\leq \frac{4m}{r}\int_{\Sigma \left( r\right) }\left\vert \nabla \rho
\right\vert.
\end{align*}%
Integrating in $r$ we conclude that 
\begin{align*}
\int_{\Sigma \left( r\right) }\left\vert \nabla \rho \right\vert &\leq
\left( \frac{r}{t}\right) ^{4m}\int_{\Sigma \left( t\right) }\left\vert
\nabla \rho \right\vert \\
&\leq \left( \frac{r}{t}\right) ^{4m}\int_{\Sigma \left( t\right) }\frac{1}{%
\left\vert \nabla \rho \right\vert }.
\end{align*}%
Together with (\ref{k3}), this implies 
\begin{equation}
\int_{\Sigma \left( r\right) }\left\vert \nabla \rho \right\vert \leq \frac{%
c(m) }{r}\mathrm{V}\left( r\right).  \label{k4}
\end{equation}%
Now the area estimate follows from (\ref{L2}).

Note that (\ref{k4}) and (\ref{L2}) also imply

\begin{equation*}
\mathrm{V}^{\prime }\left( r\right) \leq \frac{c(m) }{r}\mathrm{V}( r).
\end{equation*}%
Integrating in $r$ we obtain 
\begin{equation}
\mathrm{V}( R) \leq \left( \frac{R}{r}\right) ^{c(m) }\mathrm{V}(r)
\label{k5}
\end{equation}%
for all $R_{0}<r<R.$ Clearly, it gives both the volume doubling property and
growth estimate. This proves the result.
\end{proof}

The next lemma is our starting point for establishing growth estimates for
positive solutions to $\Delta u=\sigma u.$

\begin{lemma}
\label{I} A positive solution $u$ of $\Delta u=\sigma u$ on $D(R)\backslash
D(R_{0})$ satisfies 
\begin{equation*}
\frac{d}{dr}\left( \frac{1}{r^{4m}}\int_{\Sigma \left( r\right) }u\left\vert
\nabla \rho \right\vert \right) \leq \frac{1}{r^{4m}}\int_{D\left( r\right)
\backslash D\left( r_{0}\right) }\sigma u+\frac{1}{r^{4m}}\int_{\Sigma
\left( r_{0}\right) }\frac{\left\langle \nabla u,\nabla \rho \right\rangle }{%
\left\vert \nabla \rho \right\vert }
\end{equation*}%
for all $R\geq r\geq r_{0}\geq R_{0}.$
\end{lemma}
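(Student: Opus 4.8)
The plan is to combine the basic differential identity \eqref{k0} with the two structural facts available: the PDE $\Delta u = \sigma u \ge 0$ on the annular region, and the Laplacian comparison $\Delta \rho \le \frac{4m}{\rho}|\nabla\rho|^2$ derived from \eqref{L2} in the proof of Lemma \ref{Area}. First I would apply \eqref{k0} with $w = u$, which gives
\begin{equation*}
\frac{d}{dr}\int_{\Sigma(r)} u\,|\nabla\rho|
= \int_{\Sigma(r)} \frac{\langle \nabla u,\nabla\rho\rangle}{|\nabla\rho|}
+ \int_{\Sigma(r)} \frac{u\,\Delta\rho}{|\nabla\rho|}.
\end{equation*}
For the first term on the right, the divergence theorem on $D(r)\setminus D(r_0)$ applied to the vector field $\nabla u$ yields $\int_{\Sigma(r)}\frac{\langle\nabla u,\nabla\rho\rangle}{|\nabla\rho|} = \int_{D(r)\setminus D(r_0)}\Delta u + \int_{\Sigma(r_0)}\frac{\langle\nabla u,\nabla\rho\rangle}{|\nabla\rho|}$, and since $\Delta u = \sigma u$ this is exactly $\int_{D(r)\setminus D(r_0)}\sigma u + \int_{\Sigma(r_0)}\frac{\langle\nabla u,\nabla\rho\rangle}{|\nabla\rho|}$. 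For the second term, I would bound it using $\Delta\rho \le \frac{4m}{r}|\nabla\rho|^2$ on $\Sigma(r)$ (valid for $r \ge R_0$), getting $\int_{\Sigma(r)}\frac{u\,\Delta\rho}{|\nabla\rho|} \le \frac{4m}{r}\int_{\Sigma(r)} u\,|\nabla\rho|$ since $u > 0$.

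Putting these together gives the differential inequality
\begin{equation*}
\frac{d}{dr}\int_{\Sigma(r)} u\,|\nabla\rho| - \frac{4m}{r}\int_{\Sigma(r)} u\,|\nabla\rho|
\le \int_{D(r)\setminus D(r_0)}\sigma u + \int_{\Sigma(r_0)}\frac{\langle\nabla u,\nabla\rho\rangle}{|\nabla\rho|}.
\end{equation*}
The final step is to recognize the left-hand side as $r^{4m}\,\frac{d}{dr}\!\left(r^{-4m}\int_{\Sigma(r)} u\,|\nabla\rho|\right)$ — this is the point of the integrating factor $r^{-4m}$. Multiplying through by $r^{-4m} > 0$ yields precisely the claimed inequality. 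I would then note that the manipulations are justified because $\Sigma(r)$ and $D(r)$ are compact (properness of $\rho$), $u$ is smooth and positive on a neighborhood of the closed annulus, and $|\nabla\rho| \ge \frac12 > 0$ so all the integrands are well-behaved; the identity \eqref{k0} itself was already established by differentiating the integrated-by-parts form, so one may either invoke it directly or redo that computation with $w = u$.

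The only genuine subtlety — hardly an obstacle — is the weak-sense interpretation of $\Delta\rho \le \frac{m}{\rho}$: since $\rho$ is merely Lipschitz, the identity \eqref{k0} and the comparison step should be read distributionally, exactly as was done implicitly in Lemma \ref{Area}, where \eqref{k0} with $w=1$ was used without issue. So the honest statement is that everything reduces to the already-granted machinery of \eqref{k0} plus the PDE plus the $\rho$-comparison, and no new analytic input is needed.
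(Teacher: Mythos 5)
Your proposal is correct and follows the same route as the paper: apply \eqref{k0} with $w=u$, rewrite the $\langle\nabla u,\nabla\rho\rangle$ surface term via the divergence theorem and $\Delta u=\sigma u$, bound the $\Delta\rho$ term using \eqref{L2} together with $|\nabla\rho|\geq\frac12$, and then absorb the resulting $\frac{4m}{r}$ term with the integrating factor $r^{-4m}$. The only cosmetic difference is where you introduce the factor $4m$ (recasting the Laplacian bound as $\Delta\rho\leq\frac{4m}{\rho}|\nabla\rho|^2$ before dividing by $|\nabla\rho|$, versus bounding $\frac{1}{|\nabla\rho|}\leq 4|\nabla\rho|$ afterward), which is mathematically identical.
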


\begin{proof}
Applying (\ref{k0}) to $w=u$ and taking into account that

\begin{equation*}
\int_{\Sigma \left( r\right) }\frac{\left\langle \nabla u,\nabla \rho
\right\rangle }{\left\vert \nabla \rho \right\vert }=\int_{D\left( r\right)
\backslash D\left( r_{0}\right) }\Delta u+\int_{\Sigma \left( r_{0}\right) }%
\frac{\left\langle \nabla u,\nabla \rho \right\rangle }{\left\vert \nabla
\rho \right\vert },
\end{equation*}%
we obtain 
\begin{equation}
\frac{d}{dr}\int_{\Sigma \left( r\right) }u\left\vert \nabla \rho
\right\vert =\int_{D\left( r\right) \backslash D\left( r_{0}\right) }\sigma
u+\int_{\Sigma \left( r\right) }u\frac{\Delta \rho }{\left\vert \nabla \rho
\right\vert }+\int_{\Sigma \left( r_{0}\right) }\frac{\left\langle \nabla
u,\nabla \rho \right\rangle }{\left\vert \nabla \rho \right\vert }.
\label{k2}
\end{equation}%
By (\ref{L2}) we have that 
\begin{equation*}
\int_{\Sigma \left( r\right) }u\frac{\Delta \rho }{\left\vert \nabla \rho
\right\vert } \leq \frac{m}{r}\int_{\Sigma \left( r\right) }\frac{u}{%
\left\vert \nabla \rho \right\vert } \leq \frac{4m}{r}\int_{\Sigma \left(
r\right) }u\left\vert \nabla \rho \right\vert
\end{equation*}
for $r\geq r_{0}\geq R_{0}.$ Plugging this into \eqref{k2} implies 
\begin{equation*}
\frac{d}{dr}\int_{\Sigma \left( r\right) }u\left\vert \nabla \rho
\right\vert \leq \int_{D\left( r\right) \backslash D\left( r_{0}\right)
}\sigma u+\frac{4m}{r}\int_{\Sigma \left( r\right) }u\left\vert \nabla \rho
\right\vert +\int_{\Sigma \left( r_{0}\right) }\frac{\left\langle \nabla
u,\nabla \rho \right\rangle }{\left\vert \nabla \rho \right\vert }.
\end{equation*}
This proves the result.
\end{proof}

We now prove a preliminary growth estimate by imposing a pointwise quadratic
decay assumption on $\sigma $ of the form 
\begin{equation}
\sigma \leq \frac{\Upsilon }{\rho ^{2}}\text{ \ on }M\backslash D(r_{0}),
\label{T}
\end{equation}%
where $r_{0}\geq 4R_{0}$ and $\Upsilon >0$ is a constant.

\begin{proposition}
\label{G} Assume that $\left( M,g\right)$ admits a proper function $\rho$
satisfying (\ref{L2}) and has the mean value property $\left( \mathcal{M}%
\right).$ If $\sigma$ decays quadratically as in (\ref{T}), then there
exists a constant $C=C(m, \Upsilon )>0$ such that 
\begin{equation*}
u\leq \left( \rho +1\right) ^{C}\sup_{D\left( r_{0}\right)\backslash D(R_0) }u\text{ \ on } D%
\bigg( \frac{R}{2}\bigg)\backslash D(R_0)
\end{equation*}
for any positive solution of $\Delta u=\sigma u$ on $D\left( R\right)\backslash D(R_0)$ with $%
R\geq r_{0}.$
\end{proposition}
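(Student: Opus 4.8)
The plan is to combine Lemma \ref{I} (the differential inequality for the boundary integral of $u$) with the mean value property $(\mathcal{M})$ and the quadratic decay (\ref{T}), and to run a Gr\"onwall/iteration argument on dyadic scales. Set
\begin{equation*}
I(r)=\frac{1}{r^{4m}}\int_{\Sigma(r)}u\,|\nabla\rho|.
\end{equation*}
By Lemma \ref{I}, $I'(r)\le r^{-4m}\int_{D(r)\setminus D(r_0)}\sigma u+r^{-4m}\int_{\Sigma(r_0)}\langle\nabla u,\nabla\rho\rangle/|\nabla\rho|$. The second term is a fixed constant depending on $u$ and the geometry near $\Sigma(r_0)$; since we normalize by $\sup_{D(r_0)\setminus D(R_0)}u$ it is harmless, and using $\Delta u=\sigma u\ge 0$ one checks this boundary term is nonpositive, so it can simply be dropped. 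Thus $I(r)$ is controlled by $r^{-4m}\int_{D(r)\setminus D(r_0)}\sigma u$. Using (\ref{T}) we bound $\sigma\le \Upsilon/\rho^2$, so this becomes $\le \Upsilon\, r^{-4m}\int_{D(r)\setminus D(r_0)}\rho^{-2}u$. After integrating the co-area formula and using $|\nabla\rho|\ge 1/2$, one relates $\int_{D(r)}\rho^{-2}u$ to $\int_{R_0}^r t^{-2}\big(\int_{\Sigma(t)}u\,|\nabla\rho|\big)\,dt = \int_{R_0}^r t^{4m-2}I(t)\,dt$, giving a closed integral inequality purely in terms of $I$.

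The next step is to feed in the mean value property to convert the $\Sigma(r)$-supremum we actually want to bound into an average over a solid region, which in turn is controlled by the $I$-integrals above. Concretely, $(\mathcal{M})$ with (say) $\theta=1$ gives $\sup_{\Sigma(R)}u\le A_0\,\mathrm{V}(2R)^{-1}\int_{D(2R)\setminus D(R_0)}u$, and by the co-area formula with $|\nabla\rho|\ge 1/2$ the right-hand side is $\lesssim \mathrm{V}(2R)^{-1}\int_{R_0}^{2R}\big(\int_{\Sigma(t)}u\,|\nabla\rho|\big)dt = \mathrm{V}(2R)^{-1}\int_{R_0}^{2R}t^{4m}I(t)\,dt$. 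Meanwhile the volume estimates of Lemma \ref{Area} ($\mathrm{V}((1+\theta)r)\le(1+\theta)^{c(m)}\mathrm{V}(r)$ and $\mathrm{A}(r)\le c(m)r^{-1}\mathrm{V}(r)$) let us compare $\mathrm{V}(r)$ with $r\,\mathrm{A}(r)$ and hence with $\int_{\Sigma(r)}u\,|\nabla\rho|$ up to the factor $\sup u$, closing the loop. Combining, one gets an inequality of the schematic form $\psi(r)\le C\big(1+\int_{R_0}^{r} s^{-1}\psi(s)\,ds\big)$ for a suitable rescaled quantity $\psi(r)=r^{-A}\sup_{D(r)}u$, and Gr\"onwall's lemma in the variable $\log r$ then yields $\psi(r)\le C r^{C'}$, i.e. the desired polynomial bound $u\le(\rho+1)^{C(m,\Upsilon)}\sup_{D(r_0)\setminus D(R_0)}u$ on $D(R/2)\setminus D(R_0)$. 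A cleaner alternative, which I would probably adopt, is to avoid Gr\"onwall and instead argue by dyadic iteration: set $M_k=\sup_{D(2^k r_0)}u$, show $M_{k+1}\le (1+\varepsilon_k)M_k$ where $\sum\varepsilon_k$ behaves like a geometric or harmonic tail multiplied by a fixed constant, and track that the product $\prod(1+\varepsilon_k)$ grows at most polynomially in $2^k\sim\rho$ — the quadratic decay of $\sigma$ is exactly what makes the per-step multiplicative error summable against a logarithmic (hence polynomial after exponentiating) budget.

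The main obstacle will be the bookkeeping in the feedback step: the mean value property involves $u$ on $D(2R)$ while Lemma \ref{I} naturally produces information on $\Sigma(r)$ for $r\le R$, so one must be careful that the scales match and that the constant $C$ depends only on $m$ and $\Upsilon$ and not on $R$ or on $u$ — in particular the contribution of the inner boundary term at $\Sigma(r_0)$ and the normalization by $\sup_{D(r_0)\setminus D(R_0)}u$ must be handled at the outset (one may simply replace $u$ by $u/\sup_{D(r_0)\setminus D(R_0)}u$ and assume this sup is $1$). A secondary point requiring care is that $\Delta u\ge\sigma u$ (rather than equality) is all that $(\mathcal{M})$ needs, so one should verify $u$ itself, and not some auxiliary function, is the one fed into (\ref{mean}); since $u$ solves the equation exactly this is immediate. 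Everything else — the co-area manipulations, the use of $1/2\le|\nabla\rho|\le 1$, and the volume doubling from Lemma \ref{Area} — is routine.
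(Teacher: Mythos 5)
Your overall architecture (start from Lemma~\ref{I}, feed in the quadratic decay of $\sigma$, close via the co-area formula and an iteration/Gr\"onwall step, then invoke $(\mathcal{M})$ at the end to pass from an integral bound to a pointwise one) is broadly the same as the paper's, but there is one genuine error and one structural inefficiency worth flagging.

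The error is the claim that, since $\Delta u=\sigma u\geq 0$, the inner boundary term
$\int_{\Sigma(r_0)}\langle\nabla u,\nabla\rho\rangle/|\nabla\rho|$ is nonpositive and may simply be dropped. That is false: $\eta=\nabla\rho/|\nabla\rho|$ points \emph{away} from $D(r_0)$, so this is the flux of $u$ \emph{out} of $D(r_0)$, not the outward flux of the annulus $D(r)\setminus D(r_0)$, and subharmonicity gives no sign for it. (Concretely, for the solutions $u_E$ built in Theorem~\ref{Eintr}, $u_E$ increases outward along $\partial E(r_0)$ and decreases outward along the other ends; the net sign is indeterminate.) The paper handles this term by invoking Lemma~\ref{CY} (Cheng--Yau's gradient estimate), which yields $\left|\int_{\Sigma(r_0)}\langle\nabla u,\nabla\rho\rangle/|\nabla\rho|\right|\leq C(r_0)\sup_{\Sigma(r_0)}u$, a bound independent of $u$ after the normalization $\sup_{D(r_0)\setminus D(R_0)}u=1$. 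Omitting this produces an inhomogeneous term you cannot simply ignore, and without a control of the type Lemma~\ref{CY} provides the Gr\"onwall/dyadic step does not close.

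Two secondary points. First, the ``closing the loop'' sentence compares $\mathrm{V}(r)$ with $r\mathrm{A}(r)$ and with $\int_{\Sigma(r)}u|\nabla\rho|$ ``up to the factor $\sup u$''; but Lemma~\ref{Area} only gives $r\mathrm{A}(r)\lesssim \mathrm{V}(r)$ and H\"older only gives $\int_{\Sigma(r)}u|\nabla\rho|\leq \mathrm{A}(r)\sup_{\Sigma(r)}u$ --- there are no reverse inequalities, so the loop does not close as written. Second, you do not actually need $(\mathcal{M})$ inside the iteration. The paper's argument is cleaner: set $\omega(r)=\int_{D(r)\setminus D(r_0)}u|\nabla\rho|^2/\rho^2$; then $\omega'(r)=r^{-2}\int_{\Sigma(r)}u|\nabla\rho|$, and Lemma~\ref{I} together with (\ref{T}) and Lemma~\ref{CY} yields the second-order Euler-type ODE inequality $r^2\omega''-(4m-2)r\omega'-4\Upsilon\omega\leq C(r_0)$; the explicit substitution $\xi(r)=r^a\omega(r)$ with $a=\tfrac12\bigl(\sqrt{(4m-1)^2+16\Upsilon}-(4m-1)\bigr)$ linearizes this, two integrations give $\omega(r)\leq C(r_0)r^{a+4m-1}$ and hence $\int_{D(r)\setminus D(R_0)}u\leq C(r_0)r^{C(m,\Upsilon)}$, and the mean value property is applied exactly once at the very end to convert this into a pointwise bound on $\Sigma(r/2)$. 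Your Gr\"onwall scheme in $\log r$ can indeed substitute for the explicit ODE analysis (after one integration the paper's inequality collapses to a first-order Gr\"onwall-type inequality), but it is neither shorter nor sharper, and the indispensable use of Lemma~\ref{CY} must be restored.
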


\begin{proof}
The result is obvious if $R\leq 2r_{0}.$ Hence, we may assume from now on
that $R>2r_{0}.$ By Lemma \ref{CY}, it follows that there exists $C( r_{0})
>0$ such that 
\begin{equation}
\left\vert \int_{\Sigma \left( r_{0}\right) }\frac{\left\langle \nabla
u,\nabla \rho \right\rangle }{\left\vert \nabla \rho \right\vert }%
\right\vert \leq C( r_{0}) \sup_{\Sigma \left( r_{0}\right) }u  \label{m0.0}
\end{equation}%
with the constant $C( r_{0})$ independent of $u.$

By normalizing $u$ if necessary, we may assume that 
\begin{equation}
\sup_{D\left( r_{0}\right)\backslash D(R_0) }u=1.  \label{m0}
\end{equation}%
So we get 
\begin{equation}
\left\vert \int_{\Sigma \left( r_{0}\right) }\frac{\left\langle \nabla
u,\nabla \rho \right\rangle }{\left\vert \nabla \rho \right\vert }%
\right\vert \leq C(r_{0}).  \label{m0.1}
\end{equation}%
By Lemma \ref{I} and (\ref{L2}) we have that 
\begin{equation}  \label{m1}
\begin{split}
\frac{d}{dr}\left( \frac{1}{r^{4m}}\int_{\Sigma \left( r\right) }u\left\vert
\nabla \rho \right\vert \right) &\leq \frac{1}{r^{4m}}\int_{D\left( r\right)
\backslash D\left( r_{0}\right) }\sigma u+\frac{1}{r^{4m}}\int_{\Sigma
\left( r_{0}\right) }\frac{\left\langle \nabla u,\nabla \rho \right\rangle }{%
\left\vert \nabla \rho \right\vert } \\
&\leq \frac{4}{r^{4m}}\int_{D\left( r\right) \backslash D\left( r_{0}\right)
}\sigma u\left\vert \nabla \rho \right\vert ^{2}+\frac{1}{r^{4m}}%
\int_{\Sigma \left( r_{0}\right) }\frac{\left\langle \nabla u,\nabla \rho
\right\rangle }{\left\vert \nabla \rho \right\vert }
\end{split}%
\end{equation}
for all $r\in \left[ r_{0},R\right]$.

Combining (\ref{m1}), (\ref{m0.1}) and (\ref{T}), we conclude 
\begin{equation}
\frac{d}{dr}\left( \frac{1}{r^{4m}}\int_{\Sigma \left( r\right) }u\left\vert
\nabla \rho \right\vert \right) \leq \frac{4\Upsilon }{r^{4m}}\int_{D\left(
r\right) \backslash D\left( r_{0}\right) }u\frac{\left\vert \nabla \rho
\right\vert ^{2}}{\rho ^{2}}+\frac{C( r_{0}) }{r^{4m}}  \label{m2}
\end{equation}%
for all $r\in \left[ r_{0},R\right].$ If we set 
\begin{equation}
\omega \left( r\right) =\int_{D\left( r\right) \backslash D\left(
r_{0}\right) }u\frac{\left\vert \nabla \rho \right\vert ^{2}}{\rho ^{2}},
\label{m3}
\end{equation}%
then the co-area formula gives 
\begin{equation*}
\omega ^{\prime }\left( r\right) =\frac{1}{r^{2}}\int_{\Sigma \left(
r\right) }u\left\vert \nabla \rho \right\vert.
\end{equation*}%
So (\ref{m2}) becomes 
\begin{equation*}
\frac{d}{dr}\left( \frac{1}{r^{4m-2}}\omega ^{\prime }\left( r\right)
\right) \leq \frac{4\Upsilon }{r^{4m}}\omega \left( r\right) +\frac{C(
r_{0}) }{r^{4m}}
\end{equation*}%
or 
\begin{equation}
r^{2}\omega ^{\prime \prime }\left( r\right) -\left( 4m-2\right) r\omega
^{\prime }\left( r\right) -4\Upsilon \omega \left( r\right) \leq C( r_{0})
\label{m4}
\end{equation}%
for all $\ r\in \left[ r_{0},R\right].$ Direct calculation then implies that
the function 
\begin{equation}
\xi \left( r\right) =r^{a}\omega(r)  \label{m4''}
\end{equation}%
satisfies 
\begin{equation}
r\xi ^{\prime \prime }\left( r\right) -\left( 2a+4m-2\right) \xi ^{\prime
}\left( r\right) \leq C\left( r_{0}\right) r^{a-1}  \label{m5}
\end{equation}%
for all $r\in \left[ r_{0},R\right],$ where 
\begin{equation}
a=\frac{\sqrt{\left( 4m-1\right) ^{2}+16\Upsilon }-\left( 4m-1\right) }{2}.
\label{m4'}
\end{equation}%
Rewriting (\ref{m5}) into 
\begin{equation*}
\frac{d}{dr}\left( \frac{\xi ^{\prime }\left( r\right) }{r^{2a+4m-2}}\right)
\leq \frac{C( r_{0}) }{r^{a+4m}}
\end{equation*}%
and integrating from $r_{0}$ to $r,$ we get 
\begin{equation}
\xi ^{\prime }\left( r\right) \leq \left( \frac{r}{r_{0}}\right)
^{2a+4m-2}\xi ^{\prime }( r_{0}) +C( r_{0}) r^{2a+4m-2}  \label{m6}
\end{equation}%
for all $r\in \left[ r_{0},R\right].$

According to (\ref{m4''}) and (\ref{m3}) we have 
\begin{equation*}
\xi ^{\prime }( r_{0}) =r_{0}^{a-2}\int_{\Sigma \left( r_{0}\right)
}u\left\vert \nabla \rho \right\vert.
\end{equation*}%
Hence, by (\ref{m0}), 
\begin{equation*}
\xi^{\prime }( r_{0}) \leq C( r_{0}).
\end{equation*}%
Plugging into (\ref{m6}) we conclude that 
\begin{equation*}
\xi ^{\prime }( r) \leq C( r_{0}) r^{2a+4m-2}
\end{equation*}%
for all $r\in \left[ r_{0},R\right].$ After integrating from $r_{0}$ to $r,$
this immediately leads to 
\begin{equation*}
\omega(r) \leq C( r_{0}) r^{a+4m-1}.
\end{equation*}%
In view of (\ref{m3}) and (\ref{m4'}), we have 
\begin{equation*}
\int_{D\left( r\right)\backslash D(R_0) }u\leq C(r_{0}) r^{C\left( m,\Upsilon \right) }
\end{equation*}%
for all $r\in \left[ r_{0},R\right].$ Finally, the mean value property
implies that 
\begin{equation*}
\sup_{\Sigma( \frac{1}{2}r) }u\leq C( A,\mu ,r_{0}) r^{C\left( m,\Upsilon
\right) }
\end{equation*}%
for all $r\in \left[ 2r_{0},R\right].$ This proves the result.
\end{proof}

We remark that the assumption of $\sigma$ being of quadratic decay is
optimal in the sense that any slower decay will render the result to fail.
Indeed, on Euclidean space, the function $u(x)=\exp\left({r^{\epsilon}(x)}%
\right)$ satisfies the equation $\Delta u=\sigma u$ with $\sigma$ decaying
of order $2-2\epsilon$.

Our main result of this section is that the order of polynomial growth of $u$
in fact only depends on an integral quantity of the function $\sigma$
provided that $u$ is a priori of polynomial growth, namely, 
\begin{equation*}
\left\vert u\right\vert \leq \rho ^{C}\text{ \ on }M\backslash D(R_{0})
\end{equation*}%
for some constant $C>0.$

In the following, we denote 
\begin{equation*}
\alpha =\limsup_{R\rightarrow \infty }\left( R^{2q}\fint_{\Sigma \left(
R\right) }\sigma ^{q}\right) ^{\frac{1}{q}}
\end{equation*}%
with $q\geq 1$ to be specified.

\begin{theorem}
\label{Growth}Assume that $\left( M,g\right)$ admits a proper function $\rho$
satisfying (\ref{L2}) and has the mean value property $\left( \mathcal{M}%
\right).$ For a positive function $u$ of polynomial growth, satisfying $%
\Delta u=\sigma u$ on $M\backslash D\left( R_{0}\right)$, if $\alpha<\infty$
for some $q>\nu -\frac{1}{2}$, then there exists a constant $\Gamma( m,
A_{0},\nu, \alpha)>0$ such that 
\begin{equation*}
u\leq \Lambda \left( \rho ^{\Gamma}+1\right) \text{ \ on }M\backslash D(
R_{0}),
\end{equation*}%
where $\Lambda>0$ is a constant depending on $u.$ The same estimate for $u$
holds true in the case $q=\nu -\frac{1}{2}$ with $\Gamma=\Gamma(m, A_0, \nu)$
provided that $\alpha\leq \alpha _{0}\left( m, A_{0}, \nu \right),$ a
sufficiently small positive constant.
\end{theorem}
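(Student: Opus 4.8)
The plan is to run a De~Giorgi–Moser type iteration on the spherical means of $u$ with respect to $\rho$, where the "error" contributed by $\sigma$ is controlled by the integral quantity $\alpha$ rather than by a pointwise bound. Concretely, set
\[
I(r)=\frac{1}{r^{4m}}\int_{\Sigma(r)}u\,\lvert\nabla\rho\rvert
\quad\text{and}\quad
\omega(r)=\int_{D(r)\setminus D(r_0)}u\,\frac{\lvert\nabla\rho\rvert^2}{\rho^2},
\]
exactly as in Proposition \ref{G}. Lemma \ref{I} together with \eqref{L2} gives
\[
I'(r)\le \frac{4}{r^{4m}}\int_{D(r)\setminus D(r_0)}\sigma u\,\lvert\nabla\rho\rvert^2
+\frac{1}{r^{4m}}\int_{\Sigma(r_0)}\frac{\langle\nabla u,\nabla\rho\rangle}{\lvert\nabla\rho\rvert},
\]
and the boundary term on $\Sigma(r_0)$ is harmless by Lemma \ref{CY} after normalizing $\sup_{D(r_0)\setminus D(R_0)}u=1$. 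So the crux is to estimate $\int_{D(r)\setminus D(r_0)}\sigma u$ from above in terms of $\int_{D(r)\setminus D(r_0)}u/\rho^2$ (i.e. in terms of $\omega$), with a constant governed by $\alpha$.

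First I would establish the key Hölder/averaging estimate: for a dyadic shell $D(2^{j+1}r_0)\setminus D(2^j r_0)$,
\[
\int_{D(2^{j+1}r_0)\setminus D(2^j r_0)}\sigma u
\le \Bigl(\int_{D(2^{j+1}r_0)\setminus D(2^j r_0)}\sigma^{q}\Bigr)^{1/q}
\Bigl(\int_{D(2^{j+1}r_0)\setminus D(2^j r_0)}u^{q'}\Bigr)^{1/q'},
\]
where $\tfrac1q+\tfrac1{q'}=1$; using the co-area formula and Lemma \ref{Area} to pass from $\int_{\Sigma}\sigma^q$ (controlled by $\alpha$ and $\mathrm{A}(r)\le c(m)\mathrm{V}(r)/r$) to $\int_D\sigma^q$, one gets $\bigl(\int\sigma^q\bigr)^{1/q}\lesssim \alpha^{1/q}\,r^{-2}\,\mathrm{V}(r)^{1/q}$ on the shell. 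For the factor $\bigl(\int u^{q'}\bigr)^{1/q'}$ I would interpolate between the $L^1$ bound on $u$ and the local sup bound furnished by the mean value property $(\mathcal{M})$: on a shell at scale $r$, $\sup u\lesssim \mathrm{V}(r)^{-1}\int_{D(2r)}u$ up to the $\theta^{-2\nu}$ factor, so $\int u^{q'}\lesssim (\sup u)^{q'-1}\int u \lesssim \mathrm{V}(r)^{1-q'}(\int_{D(2r)\setminus D(R_0)}u)^{q'}$, whence $\bigl(\int u^{q'}\bigr)^{1/q'}\lesssim \mathrm{V}(r)^{1/q'-1}\int_{D(2r)\setminus D(R_0)}u$. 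Multiplying, the volume factors telescope to $\mathrm{V}(r)^{1/q+1/q'-1}=1$, and one is left with
\[
\int_{D(r)\setminus D(r_0)}\sigma u\ \lesssim\ \alpha_r^{1/q}\,\frac{1}{r^2}\int_{D(2r)\setminus D(R_0)}u\ \lesssim\ \alpha^{1/q}\,\omega(Cr)
\]
for large $r$, where $\alpha_r\to\alpha$. The role of the hypothesis $q>\nu-\tfrac12$ is precisely to make the exponents work in this interpolation so that the gain in $r$ beats the $\theta^{-2\nu}$ loss from $(\mathcal{M})$; I would choose $\theta$ (a fixed small dyadic fraction) at this point.

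Feeding this back into the differential inequality for $I$ and integrating (as in Proposition \ref{G}, reducing to a second-order ODE inequality $r^2\omega''-(4m-2)r\omega'-C\alpha^{1/q}\omega\le C(r_0)$ and substituting $\xi(r)=r^a\omega(r)$ with $a=a(m,\alpha)$ the appropriate root), one concludes $\omega(r)\le C(r_0)\,r^{\Gamma}$ with $\Gamma=\Gamma(m,A_0,\nu,\alpha)$, hence $\int_{D(r)\setminus D(R_0)}u\le C(r_0)r^{\Gamma+2}$, and finally $(\mathcal{M})$ upgrades this to $\sup_{\Sigma(r/2)}u\le \Lambda(\rho^{\Gamma}+1)$. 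In the critical case $q=\nu-\tfrac12$ the gain in $r$ exactly cancels the $\theta^{-2\nu}$ loss, so the coefficient of $\omega$ in the ODE inequality is $C(m,A_0,\nu)\,\alpha^{1/q}$ with $C$ explicit; requiring $\alpha\le\alpha_0(m,A_0,\nu)$ small makes this coefficient less than a threshold depending only on $m$, so the root $a$ — and therefore $\Gamma$ — depends only on $m,A_0,\nu$.

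The main obstacle I anticipate is bookkeeping the exponents in the interpolation step so that the geometric series over dyadic shells converges and the volume powers cancel cleanly; this is where the precise inequality $q>\nu-\tfrac12$ (equivalently, after $\tfrac1\mu+\tfrac1\nu=1$, a condition tying $q$ to the Sobolev exponent) enters, and where one must be careful that all constants genuinely reduce to the claimed dependence $\Gamma(m,A_0,\nu,\alpha)$ and, in the borderline case, to $\Gamma(m,A_0,\nu)$ under a smallness assumption on $\alpha$. The a priori polynomial growth of $u$ is used only to guarantee that $\omega(r)$ and the shell integrals of $u$ are finite so that the iteration is legitimate; it does not enter the final exponent.
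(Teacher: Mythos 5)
Your proposal correctly identifies the starting point (Lemma \ref{I}, H\"older, and the mean value property $(\mathcal{M})$), and the exponent bookkeeping you sketch — the $\mathrm{V}(r)^{1/q+1/q'-1}=1$ cancellation — is genuinely what makes $\alpha$ appear as a clean multiplier. However, there is a decisive gap at the step where you say you would ``choose $\theta$ (a fixed small dyadic fraction)'' and then reduce to a linear ODE of the form $r^2\omega''-(4m-2)r\omega'-C\alpha^{1/q}\omega\le C(r_0)$. This reduction does not go through. After the H\"older and mean-value steps, the quantity you actually control is not $\omega(r)$ but $\omega((1+\theta)r)$ (more precisely, the paper's inequality \eqref{id} has the advanced argument $\chi((1+\theta)t)$ and the nonlinear factor $\chi^{1/q}\,(\chi')^{1-1/q}$). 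If $\omega$ grows like $r^{b+1}$ with $b$ large and unknown — which is exactly the situation you are trying to rule out — then $\omega((1+\theta)r)$ exceeds $\omega(r)$ by a factor of order $(1+\theta)^{b+1}$; with a fixed $\theta$ this factor is exponentially large in $b$ and cannot be absorbed, so the ODE you write is simply false as an estimate for $\omega$.

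The missing idea — and the crux of the paper's argument — is to \emph{not} fix $\theta$, but to take $\theta=1/b$, tying the enlargement to the a priori degree $b$ of polynomial growth. Then $(1+\theta)^{b+1}$ stays bounded, at the cost of the mean-value prefactor becoming $\theta^{-2\nu/q}=b^{2\nu/q}$, which is exactly balanced against the $b^{-2-1/q}$ gained from two integrations in $r$; the net factor $\bar\alpha\,b^{2\nu/q-2-1/q}=\bar\alpha\,b^{-\varepsilon}$ is what makes $\varepsilon=(2q+1-2\nu)/q>0$ (i.e.\ $q>\nu-\tfrac12$) the right hypothesis. The paper then runs an induction in $k$ showing $\chi'(r)\le\Lambda\big((\bar\alpha/b^\varepsilon)^{k/2}r^b+r^{b-1}\big)$ and derives a contradiction with the minimality of $b$ unless $\min\{b^\varepsilon/\bar\alpha,\,b\}\le(100C_0)^2$, which yields the final degree bound. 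So the mechanism is a bootstrap/contradiction on the minimal admissible $b$, not the solution of a fixed linear ODE; and the role of the a priori polynomial growth is more than ``making $\omega$ finite'' — it supplies the starting exponent $b$ that the iteration then contracts. Your plan as written would only recover the pointwise-decay result of Proposition \ref{G} (with $\Gamma$ depending on $\Upsilon$), not the integral-quantity version.
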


\begin{proof}
By the H\"{o}lder inequality, $\alpha$ is increasing in $q.$ So we may
restrict our attention to those $q$ that

\begin{equation*}
0\leq \varepsilon <\frac{1}{2},
\end{equation*}
where

\begin{equation}
\varepsilon =\frac{2q+1-2\nu }{q}.  \label{eps}
\end{equation}%
To treat both cases $q>\nu -\frac{1}{2}$ and $q=\nu -\frac{1}{2}$ at the
same time, we let 
\begin{equation}
\bar{\alpha}=\min \left\{ \alpha ,1\right\} \text{ \ and }\widetilde{\alpha }%
=\max \left\{ \alpha ,1\right\} .  \label{n1.1}
\end{equation}%
Note that $\alpha =\bar{\alpha}\,\widetilde{\alpha }.$ In the following, 
\begin{equation}
C_{0}=C_{0}\left( m,A_{0},\nu ,\widetilde{\alpha }\right) >1  \label{C0}
\end{equation}%
is a fixed large constant, depending only on $m,A_{0},\nu $ and $\widetilde{%
\alpha },$ to be specified later.

In view of the definition of $\alpha,$ there exists $r_{0}\geq 4R_{0}$ such
that 
\begin{equation*}
\int_{\Sigma \left( r\right) }\frac{\sigma ^{q}}{\left\vert \nabla \rho
\right\vert }\leq 3\alpha ^{q}r^{-2q}\mathrm{A}( r)
\end{equation*}%
for all $r\geq r_{0}.$ From Lemma \ref{Area} it follows that 
\begin{equation}
\int_{\Sigma \left( r\right) }\frac{\sigma ^{q}}{\left\vert \nabla \rho
\right\vert }\leq c\left( m\right) \alpha ^{q}r^{-2q-1}\mathrm{V}( r) ,
\label{al1}
\end{equation}%
for all $r\geq r_{0}.$

Denote 
\begin{equation}
\chi(r) =\int_{D( r) \backslash D\left( R_{0}\right) }u\frac{\left\vert
\nabla \rho \right\vert ^{2}}{\rho ^{4m}}.  \label{kai}
\end{equation}%
We claim that $\chi $ satisfies the following inequality. 
\begin{equation}
r^{4m}\chi ^{\prime \prime }(r) \leq \frac{C_{0}\bar{\alpha}}{\theta ^{\frac{%
2\nu }{q}}}\int_{r_{0}}^{r}\chi^{\frac{1}{q}}(( 1+\theta) t)( \chi ^{\prime
}( t)) ^{1-\frac{1}{q}}t^{4m-2-\frac{1}{q}}dt+\Lambda _{0}  \label{id}
\end{equation}%
for all $r\geq r_{0}$ and $0<\theta \leq 1,$ where 
\begin{equation}
\Lambda _{0}=\int_{\Sigma \left( r_{0}\right) }\left( u+\left\vert \nabla
u\right\vert \right).  \label{Lambda}
\end{equation}

We first prove (\ref{id}) for $q>1.$ By the co-area formula, 
\begin{equation}
\chi ^{\prime }(r)=\frac{1}{r^{4m}}\int_{\Sigma (r)}u\left\vert \nabla \rho
\right\vert .  \label{kai'}
\end{equation}%
Hence, using Lemma \ref{I}, we have 
\begin{equation}
\begin{split}
\chi ^{\prime \prime }(r)& =\frac{d}{dr}\left( \frac{1}{r^{4m}}\int_{\Sigma
\left( r\right) }u\left\vert \nabla \rho \right\vert \right)  \label{p1} \\
& \leq \frac{1}{r^{4m}}\int_{D\left( r\right) \backslash D\left(
r_{0}\right) }\sigma u+\frac{1}{r^{4m}}\int_{\Sigma \left( r_{0}\right) }%
\frac{\left\langle \nabla u,\nabla \rho \right\rangle }{\left\vert \nabla
\rho \right\vert }.
\end{split}%
\end{equation}
The first term can be estimated by the co-area formula and H\"{o}lder
inequality as 
\begin{equation}
\begin{split}
\int_{D\left( r\right) \backslash D\left( r_{0}\right) }\sigma u&
=\int_{r_{0}}^{r}\left( \int_{\Sigma \left( t\right) }\frac{\sigma u}{%
\left\vert \nabla \rho \right\vert }\right) dt \\
& \leq \int_{r_{0}}^{r}\left( \int_{\Sigma \left( t\right) }\frac{\sigma ^{q}%
}{\left\vert \nabla \rho \right\vert }\right) ^{\frac{1}{q}}\left(
\int_{\Sigma \left( t\right) }\frac{u^{p}}{\left\vert \nabla \rho
\right\vert }\right) ^{\frac{1}{p}}dt,
\end{split}%
\end{equation}%
where 
\begin{equation*}
\frac{1}{p}+\frac{1}{q}=1.
\end{equation*}%
Invoking (\ref{al1}) we conclude 
\begin{equation}
\int_{D(r)\backslash D\left( r_{0}\right) }\sigma u\leq c(m)\alpha
\int_{r_{0}}^{r}\left( \int_{\Sigma \left( t\right) }\frac{u^{p}}{\left\vert
\nabla \rho \right\vert }\right) ^{\frac{1}{p}}\frac{\mathrm{V}(t)^{\frac{1}{%
q}}}{t^{2+\frac{1}{q}}}dt.  \label{p3}
\end{equation}%
On the other hand, the mean value property (\ref{mean}) implies 
\begin{equation}
\begin{split}
\sup_{\Sigma \left( t\right) }u& \leq \frac{A_{0}}{\theta ^{2\nu }}\frac{1}{%
\mathrm{V}((1+\theta )t)}\int_{D\left( \left( 1+\theta \right) t\right)
\backslash D\left( R_{0}\right) }u \\
& \leq \frac{4A_{0}}{\theta ^{2\nu }}\frac{\left( \left( 1+\theta \right)
t\right) ^{4m}}{\mathrm{V}(t)}\int_{D\left( \left( 1+\theta \right) t\right)
\backslash D\left( R_{0}\right) }u\frac{\left\vert \nabla \rho \right\vert
^{2}}{\rho ^{4m}} \\
& \leq \frac{c(m)A_{0}}{\theta ^{2\nu }}\frac{t^{4m}}{\mathrm{V}(t)}\chi
((1+\theta )t)
\end{split}%
\end{equation}%
for all $t\geq r_{0}.$ Therefore, 
\begin{align*}
\left( \int_{\Sigma \left( t\right) }\frac{u^{p}}{\left\vert \nabla \rho
\right\vert }\right) ^{\frac{1}{p}}& \leq \left( \sup_{\Sigma \left(
t\right) }u\right) ^{\frac{1}{q}}\left( \int_{\Sigma \left( t\right) }\frac{u%
}{\left\vert \nabla \rho \right\vert }\right) ^{\frac{1}{p}} \\
& \leq \frac{c(m)A_{0}^{\frac{1}{q}}}{\theta ^{\frac{2\nu }{q}}}\frac{t^{%
\frac{4m}{q}}}{\mathrm{V}(t)^{\frac{1}{q}}}\chi ^{\frac{1}{q}}((1+\theta
)t)\left( \int_{\Sigma \left( t\right) }\frac{u}{\left\vert \nabla \rho
\right\vert }\right) ^{\frac{1}{p}} \\
& \leq \frac{c(m)A_{0}^{\frac{1}{q}}}{\theta ^{\frac{2\nu }{q}}}\frac{t^{4m}%
}{\mathrm{V}\left( t\right) ^{\frac{1}{q}}}\chi ^{\frac{1}{q}}((1+\theta
)t)\left( \chi ^{\prime }\left( t\right) \right) ^{\frac{1}{p}},
\end{align*}%
where in the last line we have used (\ref{kai'}).

Plugging this into (\ref{p3}) we conclude that 
\begin{equation}
\int_{D\left( r\right) \backslash D\left( r_{0}\right) }\sigma u\leq \frac{%
C_{0}\bar{\alpha}}{\theta ^{\frac{2\nu }{q}}}\int_{r_{0}}^{r}\chi^{\frac{1}{q%
}}((1+\theta) t) \left( \chi ^{\prime }( t) \right) ^{\frac{1}{p}}t^{4m-2-%
\frac{1}{q}}dt,  \label{p4}
\end{equation}%
where $C_{0}=c( m) A_{0}^{\frac{1}{q}}\widetilde{\alpha }$ for some $c(m)$
depending only on $m.$

By (\ref{p1}) and (\ref{p4}) it follows that 
\begin{align*}
\chi ^{\prime \prime }( r) &\leq \frac{C_{0}\bar{\alpha}}{\theta ^{\frac{%
2\nu }{q}}r^{4m}}\int_{r_{0}}^{r}\chi^{\frac{1}{q}}(( 1+\theta ) t) \left(
\chi ^{\prime }\left( t\right) \right) ^{\frac{1}{p}}t^{4m-2-\frac{1}{q}}dt
\\
&+\frac{1}{r^{4m}}\int_{\Sigma \left( r_{0}\right) }\frac{\left\langle
\nabla u,\nabla \rho \right\rangle }{\left\vert \nabla \rho \right\vert }.
\end{align*}%
In view of (\ref{Lambda}), this can be rewritten into 
\begin{equation*}
r^{4m}\chi ^{\prime \prime }( r) \leq \frac{C_{0}\bar{\alpha}}{\theta ^{%
\frac{2\nu }{q}}}\int_{r_{0}}^{r}\chi^{\frac{1}{q}}(( 1+\theta ) t) \left(
\chi ^{\prime }\left( t\right) \right) ^{1-\frac{1}{q}}t^{4m-2-\frac{1}{q}%
}dt+\Lambda _{0}.
\end{equation*}%
Hence, (\ref{id}) holds for any $q>1.$

To extend the result to $q=1,$ we simply let $q\rightarrow 1$ in (\ref{id})
and note that both sides are continuous as functions of $q.$

In conclusion, we have 
\begin{equation}
r^{4m}\chi ^{\prime \prime }( r) \leq \frac{C_{0}\bar{\alpha}}{\theta ^{%
\frac{2\nu }{q}}}\int_{r_{0}}^{r}\chi ^{\frac{1}{q}}(( 1+\theta ) t) \left(
\chi ^{\prime }\left( t\right) \right) ^{1-\frac{1}{q}}t^{4m-2-\frac{1}{q}%
}dt+\Lambda _{0}  \label{id1}
\end{equation}%
for all $r\geq r_{0}$ and $0<\theta \leq 1$.

Since $u$ is assumed to be of polynomial growth, there exist constants $\bar{%
b}>0$ and $\bar{\Lambda}>0$ such that 
\begin{equation*}
u\leq \bar{\Lambda}\rho ^{\bar{b}}\text{ \ on }M\backslash D( r_{0}).
\end{equation*}%
Together with Lemma \ref{Area} we get 
\begin{equation*}
\chi^{\prime}( r) =\frac{1}{r^{4m}}\int_{\Sigma \left( r\right) }u\left\vert
\nabla \rho \right\vert \leq c( m) \bar{\Lambda}\,r^{\bar{b}+\gamma( m)}\mathrm{V}(R_0) .
\end{equation*}%
Therefore, for $r\geq r_{0},$ 
\begin{equation}
\chi ^{\prime }( r) \leq \Lambda r^{b}  \label{n1}
\end{equation}%
for some constants $b>0$ and $\Lambda >0.$

Obviously, the constant $b$ in (\ref{n1}) can be chosen in such a way that (%
\ref{n1}) no longer holds with $b$ replaced by $b-1$ for whatever constant $%
\Lambda .$ Also, the constant $\Lambda $ can be arranged to satisfy that $%
\Lambda \geq \Lambda _{0}$ and 
\begin{equation}
\Lambda \geq \int_{D\left( r_{0}\right) \backslash D( R_{0}) }( u+\left\vert
\nabla u\right\vert) .  \label{Lambda1}
\end{equation}

For $\varepsilon $ in (\ref{eps}) and $C_{0}=C_{0}\left( m,A_{0},\nu, 
\widetilde{\alpha }\right)$ from (\ref{id1}) we assume by contradiction that 
\begin{equation}
\min \left\{ \frac{b^{\varepsilon }}{\bar{\alpha}},b\right\} >\left(
100C_{0}\right) ^{2}.  \label{bC0}
\end{equation}

We now prove by induction on $k\geq 0$ that 
\begin{equation}
\chi ^{\prime }( r) \leq \Lambda \bigg( \Big( \frac{\bar{\alpha}}{%
b^{\varepsilon }}\Big) ^{\frac{k}{2}}r^{b}+r^{b-1}\bigg)  \label{n2}
\end{equation}%
for all $r\geq r_{0}.$

Clearly, (\ref{n2}) holds for $k=0$ in view of (\ref{n1}). We assume it is
true for $k$ and prove it for $k+1.$ Integrating (\ref{n2}) we obtain that 
\begin{align*}
\chi( r) &\leq \Lambda \int_{r_{0}}^{r}\bigg( \Big( \frac{\bar{\alpha}}{%
b^{\varepsilon }}\Big) ^{\frac{k}{2}}t^{b}+t^{b-1}\bigg) dt+\chi ( r_{0}) \\
&\leq \frac{\Lambda }{b}\bigg( \Big( \frac{\bar{\alpha}}{b^{\varepsilon }}%
\Big) ^{\frac{k}{2}}r^{b+1}+r^{b}\bigg) +\Lambda ,
\end{align*}%
where the last line follows from (\ref{Lambda1}). Since 
\begin{equation*}
\Lambda \leq \frac{\Lambda }{b}r^{b},
\end{equation*}%
this implies 
\begin{equation*}
\chi ( r) \leq \frac{2\Lambda }{b}\bigg( \Big( \frac{\bar{\alpha}}{%
b^{\varepsilon }}\Big) ^{\frac{k}{2}}r^{b+1}+r^{b}\bigg)
\end{equation*}%
for all $r\geq r_{0}.$ Therefore, 
\begin{equation}
\chi((1+\theta) r) \leq \frac{2\Lambda }{b}( 1+\theta) ^{b+1}\bigg( \Big( 
\frac{\bar{\alpha}}{b^{\varepsilon }}\Big) ^{\frac{k}{2}}r^{b+1}+r^{b}\bigg)
\label{n3}
\end{equation}%
for all $r\geq r_{0}$ and $0<\theta \leq 1.$

By (\ref{n2}) and (\ref{n3}) we get 
\begin{align*}
\int_{r_{0}}^{r}&\chi ^{\frac{1}{q}}(( 1+\theta ) t) \left( \chi ^{\prime }(
t) \right) ^{1-\frac{1}{q}}t^{4m-2-\frac{1}{q}}dt \\
&\leq \frac{2\Lambda }{b^{\frac{1}{q}}}( 1+\theta) ^{\frac{b+1}{q}%
}\int_{r_{0}}^{r}\bigg( \Big( \frac{\bar{\alpha}}{b^{\varepsilon }}\Big) ^{%
\frac{k}{2}}t^{b}+t^{b-1}\bigg) t^{4m-2}dt \\
&\leq \frac{2\Lambda }{b^{1+\frac{1}{q}}}( 1+\theta) ^{\frac{b+1}{q}}\bigg( %
\Big( \frac{\bar{\alpha}}{b^{\varepsilon }}\Big) ^{\frac{k}{2}%
}r^{b+4m-1}+r^{b+4m-2}\bigg) .
\end{align*}%
Plugging into (\ref{id1}), we arrive at 
\begin{equation*}
\chi^{\prime \prime }( r) \leq \frac{2\Lambda C_{0}\bar{\alpha}}{\theta ^{%
\frac{2\nu }{q}}b^{1+\frac{1}{q}}}( 1+\theta) ^{\frac{b+1}{q}}\bigg( \Big( 
\frac{\bar{\alpha}}{b^{\varepsilon }}\Big) ^{\frac{k}{2}}r^{b-1}+r^{b-2}%
\bigg) +\frac{\Lambda _{0}}{r^{4m}}
\end{equation*}%
for all $r\geq r_{0}.$ Integrating in $r$ then yields 
\begin{equation}  \label{n4}
\chi^{\prime }(r) \leq \frac{3\Lambda C_{0}\bar{\alpha}}{\theta ^{\frac{2\nu 
}{q}}b^{2+\frac{1}{q}}}( 1+\theta) ^{\frac{b+1}{q}}\bigg(\Big( \frac{\bar{%
\alpha}}{b^{\varepsilon }}\Big) ^{\frac{k}{2}}r^{b}+r^{b-1}\bigg) +\frac{1}{2%
}\Lambda _{0}+\chi ^{\prime }\left( r_{0}\right)
\end{equation}%
for all $r\geq r_{0}$ and $0<\theta \leq 1.$ Note that by (\ref{Lambda}) 
\begin{equation*}
\chi ^{\prime }( r_{0}) =\frac{1}{r_{0}^{4m}}\int_{\Sigma \left(
r_{0}\right) }u\left\vert \nabla \rho \right\vert \leq \frac{1}{2}\Lambda
_{0}.
\end{equation*}%
Setting $\theta =\frac{1}{b}$ in (\ref{n4}) and using (\ref{eps}), we obtain
that 
\begin{equation*}
\chi ^{\prime }( r) \leq 4eC_{0}\frac{\bar{\alpha}}{b^{\varepsilon }}\Lambda %
\bigg( \Big( \frac{\bar{\alpha}}{b^{\varepsilon }}\Big) ^{\frac{k}{2}%
}r^{b}+r^{b-1}\bigg) +\Lambda _{0}.
\end{equation*}%
In view of (\ref{bC0}), 
\begin{equation*}
4eC_{0}\frac{\bar{\alpha}}{b^{\varepsilon }}\leq \frac{1}{2}\left( \frac{%
\bar{\alpha}}{b^{\varepsilon }}\right) ^{\frac{1}{2}}.
\end{equation*}%
Hence, the preceding inequality becomes 
\begin{equation*}
\chi^{\prime }( r) \leq \frac{1}{2}\Lambda \bigg( \Big( \frac{\bar{\alpha}}{%
b^{\varepsilon }}\Big) ^{\frac{k+1}{2}}r^{b}+r^{b-1}\bigg) +\Lambda _{0}.
\end{equation*}
However, 
\begin{equation*}
\Lambda _{0} \leq \Lambda \leq \frac{1}{2}\Lambda r^{b-1}
\end{equation*}%
for $r\geq r_{0}.$ In conclusion, 
\begin{equation*}
\chi ^{\prime }( r) \leq \Lambda \bigg( \Big( \frac{\bar{\alpha}}{%
b^{\varepsilon }}\Big) ^{\frac{k+1}{2}}r^{b}+r^{b-1}\bigg)
\end{equation*}%
for all $r\geq r_{0}.$

This completes the induction step and proves that (\ref{n2}) holds for all $k\geq 0.$ We
have thus established that 
\begin{equation}
\chi ^{\prime }( r) \leq \Lambda \bigg( \Big( \frac{\bar{\alpha}}{%
b^{\varepsilon }}\Big) ^{\frac{k}{2}}r^{b}+r^{b-1}\bigg)  \label{n9}
\end{equation}%
for all $k\geq 0$ and all $r\geq r_{0}.$

By (\ref{bC0}) we have $\frac{\bar{\alpha}}{b^{\varepsilon }}<1.$ Hence, by
letting $k\rightarrow \infty $ in (\ref{n9}) one sees that 
\begin{equation*}
\chi ^{\prime }( r) \leq \Lambda r^{b-1}
\end{equation*}%
for all $r\geq r_{0}.$ This clearly contradicts with the choice of $b.$

In conclusion, we must have 
\begin{equation}
\min \left\{ \frac{b^{\varepsilon }}{\bar{\alpha}},b\right\} \leq (
100C_{0}) ^{2}  \label{bC1}
\end{equation}%
for some constant $C_{0}=C_{0}( m,A_{0},\nu ,\widetilde{\alpha }). $

Let us consider first the case $q>\nu -\frac{1}{2}$ or $\varepsilon >0.$ It
is easy to see from (\ref{bC1}) that 
\begin{equation*}
b\leq \left( 100C_{0}\right) ^{\frac{2}{\varepsilon }}.
\end{equation*}
Therefore, 
\begin{equation*}
\int_{\Sigma \left( r\right) }\frac{u}{\left\vert \nabla \rho \right\vert }%
\leq \Lambda r^{\Gamma _{\epsilon }-1}
\end{equation*}%
for all $r\geq r_{0},$ where 
\begin{equation*}
\Gamma _{\varepsilon }=\left( 100C_{0}\right) ^{\frac{2}{\varepsilon }}+4m+1.
\end{equation*}
Integrating in $r$ and applying the mean value inequality (\ref{mean}), we
get 
\begin{equation}
u\leq \widetilde{\Lambda }\rho ^{\Gamma _{\epsilon }}\text{ \ on }%
M\backslash D( r_{0}) ,  \label{n10}
\end{equation}%
where $\widetilde{\Lambda }=\frac{2^{\Gamma _{\varepsilon }}\Lambda }{%
\mathrm{V}( R_{0}) }$.

Assume now that $q=\nu -\frac{1}{2}$ or $\varepsilon =0.$ Then (\ref{bC1})
implies 
\begin{equation}
\min \left\{ \frac{1}{\bar{\alpha}},b\right\} \leq \left( 100C_{0}\right)
^{2}.  \label{n11}
\end{equation}%
So if $\alpha< \alpha_0$ with 
\begin{equation*}
\frac{1}{\alpha_0}=\left( 100C_{0}\right) ^{2},
\end{equation*}%
then 
\begin{equation*}
\frac{1}{\bar{\alpha}}=\frac{1}{\alpha}>\left( 100C_{0}\right) ^{2}
\end{equation*}%
and (\ref{n11}) implies that 
\begin{equation*}
b\leq \left( 100C_{0}\right) ^{2}.
\end{equation*}%
As above, we conclude that 
\begin{equation}
u\leq \widetilde{\Lambda }\rho ^{\Gamma }\text{ \ on }M\backslash D( r_{0})
\label{n12}
\end{equation}%
for some $\Gamma( m,A_{0},\nu),$ where $\widetilde{\Lambda }=\frac{2^{\Gamma
}\Lambda }{\mathrm{V}( R_{0}) }.$

By (\ref{n10}) and (\ref{n12}), the theorem is proved.
\end{proof}

Combining Proposition \ref{G} with Theorem \ref{Growth}, we have the
following corollary concerning positive solutions $u$ to $\Delta u=\sigma u$
on $M\setminus D(R_{0}).$

\begin{corollary}
\label{BF} Assume that $\left( M,g\right)$ admits a proper function $\rho$
satisfying (\ref{L2}) and has the mean value property $\left( \mathcal{M}%
\right).$ Suppose that $\sigma$ decays quadratically. Then there exists $%
\Gamma( m, A_{0},\nu, \alpha)>0$ such that 
\begin{equation*}
u\leq \Lambda \left( \rho ^{\Gamma}+1\right) \text{ \ on }M\backslash D(
R_{0}),
\end{equation*}%
where $\Lambda>0$ is a constant depending on $u,$ provided that $%
\alpha<\infty$ for some $q>\nu -\frac{1}{2}.$ In the case $q=\nu -\frac{1}{2}%
,$ the same conclusion holds for some $\Gamma( m, A_{0},\nu)>0$ when $%
\alpha\leq \alpha _{0}( m, A_{0}, \nu),$ a sufficiently small positive
constant.
\end{corollary}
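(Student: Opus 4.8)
The plan is to deduce this almost immediately by chaining Proposition \ref{G} with Theorem \ref{Growth}: the former upgrades an arbitrary positive solution to one of polynomial growth (with an exponent that is allowed to depend on the crude decay constant of $\sigma$), and the latter then improves the exponent to one depending only on the integral quantity $\alpha$ (together with $m,A_0,\nu$).

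First I would record that ``$\sigma$ decays quadratically'' means exactly that there are $\Upsilon>0$ and $r_0\geq 4R_0$ with $\sigma\leq \Upsilon\rho^{-2}$ on $M\setminus D(r_0)$, i.e.\ hypothesis (\ref{T}) holds. Fix a positive solution $u$ of $\Delta u=\sigma u$ on $M\setminus D(R_0)$. For each $R\geq r_0$ the restriction of $u$ to $D(R)\setminus D(R_0)$ is a positive solution there, so Proposition \ref{G} applies and gives
\[
u\leq (\rho+1)^{C}\,\sup_{D(r_0)\setminus D(R_0)}u \quad\text{on } D\!\left(\tfrac{R}{2}\right)\setminus D(R_0),
\]
with $C=C(m,\Upsilon)$. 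The crucial observation is that this bound is uniform in $R$: neither the exponent $C$ nor the normalizing supremum over the fixed annulus $D(r_0)\setminus D(R_0)$ depends on $R$. Letting $R\to\infty$ therefore yields the same inequality on all of $M\setminus D(R_0)$, and in particular $|u|\leq \bar\Lambda\,\rho^{C}$ for $\rho$ large. Thus $u$ is of polynomial growth in the precise sense required by Theorem \ref{Growth}.

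With that in hand the rest is a direct citation. Applying Theorem \ref{Growth} to the now-polynomially-growing $u$: if $\alpha<\infty$ for some $q>\nu-\tfrac12$, we obtain $\Gamma=\Gamma(m,A_0,\nu,\alpha)>0$ and a constant $\Lambda>0$ depending on $u$ with $u\leq \Lambda(\rho^{\Gamma}+1)$ on $M\setminus D(R_0)$, which is the claim; in the critical case $q=\nu-\tfrac12$ the same theorem supplies the same conclusion with $\Gamma=\Gamma(m,A_0,\nu)$ under the extra hypothesis $\alpha\leq\alpha_0(m,A_0,\nu)$. The only step needing genuine care is the passage $R\to\infty$ in the preliminary estimate, and this is legitimate precisely because Proposition \ref{G} is uniform in $R$; note also that the auxiliary constant $\Upsilon$ disappears from the final exponent, reflecting the fact that once polynomial growth is known, the sharp growth order is governed entirely by $\alpha$.
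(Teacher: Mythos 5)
Your argument is exactly the paper's: combine Proposition \ref{G} (to upgrade the positive solution to one of polynomial growth, with exponent depending on $m,\Upsilon$) with Theorem \ref{Growth} (to trade that crude exponent for one controlled by $\alpha$ alone). The paper states this derivation in a single sentence; your added remark that the exponent and normalizing constant in Proposition \ref{G} are uniform in $R$, so one may let $R\to\infty$, is a correct and welcome clarification.
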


\section{Dimension Estimate\label{ED}}

In this section, we establish a dimension estimate for the space $\mathcal{P}
$ spanned by all positive solutions to the equation $\Delta u=\sigma u$ on $%
M.$ We continue to assume that $M$ admits a proper function $\rho$
satisfying (\ref{L2}) and has the mean value property $\left( \mathcal{M}%
\right).$ Our argument closely follows that in \cite{L}.

Define 
\begin{equation*}
L^d(M)=\left\{v: \Delta v=\sigma v, \left\vert v\right\vert \leq c\,\rho ^{d}%
\text{ \ on } M \right\},
\end{equation*}
the space of polynomial growth solutions of degree at most $d.$

\begin{lemma}
\label{dimpre1} Assume that $\left( M,g\right) $ admits a proper function $%
\rho $ satisfying (\ref{L2}) and has the mean value property $\left( 
\mathcal{M}\right) .$ Then $\dim L^{d}(M)\leq \Gamma (m,A_{0},\nu ,d).$
\end{lemma}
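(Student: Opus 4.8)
The plan is to adapt the standard argument of Li~\cite{L} bounding the dimension of spaces of polynomial growth harmonic functions, which reduces to two ingredients: a mean value inequality (provided here by property $(\mathcal{M})$) and a volume doubling property along the exhaustion $D(r)$ (provided by Lemma~\ref{Area}). First I would set $N=\dim L^d(M)$ and fix any $N$-dimensional subspace $K\subset L^d(M)$; it suffices to bound $\dim K$. For a large radius $r$, introduce on $K$ the bilinear form
\begin{equation*}
\langle u,v\rangle_r = \int_{D(r)\setminus D(R_0)} uv,
\end{equation*}
which is an inner product on $K$ for $r$ large (a nonzero solution cannot vanish identically on an open set). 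Pick an orthonormal basis $\{u_1,\dots,u_N\}$ of $(K,\langle\cdot,\cdot\rangle_r)$ and consider the trace quantity $F(r)=\sum_{i=1}^N \langle u_i,u_i\rangle_r = N$; the point of the argument is instead to track how $F$ grows as $r$ increases while keeping the basis fixed at scale $r$, i.e.\ to estimate $\sum_i \langle u_i,u_i\rangle_{(1+\theta)r}$ from below by choosing the basis cleverly (a basis adapted to a short time later), and from above via the mean value and doubling inequalities.

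The key steps, in order, are: (i) \emph{Mean value bound.} For any $u\in L^d(M)$ with $\Delta u=\sigma u$, apply $(\mathcal{M})$ to $u^2$ (which satisfies $\Delta(u^2)=2|\nabla u|^2+2\sigma u^2\ge \sigma u^2\geq 2\sigma(u^2/2)\ge\dots$; more precisely $\Delta(u^2)\ge \sigma u^2$ since $2|\nabla u|^2\ge0$ and $2\sigma u^2\ge\sigma u^2$), obtaining
\begin{equation*}
\sup_{\Sigma(R)} u^2 \le \frac{C(m,A_0,\nu)}{\mathrm{V}((1+\theta)R)}\int_{D((1+\theta)R)\setminus D(R_0)} u^2,
\end{equation*}
and hence a pointwise bound on $D(R)$ in terms of the $L^2$-norm on a slightly larger set, after covering $D(R)$ by level sets. (ii) \emph{Doubling.} Use Lemma~\ref{Area} to replace $\mathrm{V}((1+\theta)R)$ by $(1+\theta)^{-c(m)}\mathrm{V}(R)$, so the loss is polynomial in $\theta^{-1}$. (iii) \emph{Polynomial growth.} Since each $v\in K$ satisfies $|v|\le c\rho^d$, the integrals $\int_{D(r)\setminus D(R_0)}v^2$ grow at most like $r^{2d}\mathrm{V}(r)\le r^{2d+\gamma(m)}\mathrm{V}(R_0)$; combined with doubling this forces, for a suitable sequence $r_j\to\infty$, the ratio $\int_{D((1+\theta)r_j)}v^2\big/\int_{D(r_j)}v^2$ to be bounded (on average) by a constant depending only on $d,m,\theta$. (iv) \emph{Counting.} This is the heart of the argument: following Lemma~14 (or the corresponding lemma) of \cite{L}, for any finite-dimensional $K$ and any $\delta>0$ one has, for infinitely many $r$,
\begin{equation*}
\dim K \;\le\; C_\delta \cdot \frac{1}{N}\sum_{i=1}^N \frac{\langle u_i,u_i\rangle_{(1+\theta)r}}{\langle u_i,u_i\rangle_r}\Big/ \Big(\text{scale factor}\Big),
\end{equation*}
where $\{u_i\}$ is $\langle\cdot,\cdot\rangle_r$-orthonormal; combining the mean value bound from step~(i) (which controls each $u_i$ pointwise, hence controls $\langle u_i,u_i\rangle_{(1+\theta)r}$ by a geometric factor times $\langle u_i,u_i\rangle_{r}$ plus a doubling error) with step~(iii) yields $\dim K\le \Gamma(m,A_0,\nu,d)$, after optimizing $\theta$ and $\delta$.

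The main obstacle is step~(iv), the linear-algebra counting lemma: one must show that if $K$ has large dimension, one can find many ``almost orthogonal'' elements that are simultaneously small on $D(r)$ and not too large on $D((1+\theta)r)$, and that the mean value inequality then produces a contradiction with the dimension count. This requires the transplantation argument of \cite{L}: express an $\langle\cdot,\cdot\rangle_{(1+\theta)r}$-orthonormal basis in terms of an $\langle\cdot,\cdot\rangle_{r}$-orthonormal basis by a change-of-basis matrix whose determinant is controlled, use the mean value bound on each basis element to bound the matrix entries, and invoke the arithmetic-geometric mean inequality on the eigenvalues. Since the statement explicitly says the argument is ``verbatim to \cite{L}'', I would carry out steps (i)--(iii) in detail (they use only $(\mathcal{M})$ and Lemma~\ref{Area}, both available here, with $u^2$ in place of a harmonic function's square) and then cite \cite{L} for the counting mechanism in step~(iv), checking only that the only properties of the ambient geometry it uses are the mean value inequality and volume doubling, both of which we have.
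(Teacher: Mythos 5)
Your overall route is the paper's route: adapt Li's \cite{L} trace/determinant iteration, using the mean value property $(\mathcal{M})$ and the volume growth/doubling from Lemma~\ref{Area}. Steps (ii)--(iv) are fine as sketched, and your observation that $\Delta(u^2)=2|\nabla u|^2+2\sigma u^2\geq \sigma u^2$ is exactly what justifies applying $(\mathcal{M})$ to the square of a sign-changing solution.

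The gap is in step (i), in passing from $\Sigma(R)$ to $D(R)$. The property $(\mathcal{M})$ as defined here bounds only $\sup_{\Sigma(R)}$, not $\sup_{D(R)}$. Your fix is to ``cover $D(R)$ by level sets,'' i.e.\ apply $(\mathcal{M})$ at $\Sigma(r)$ for each $r\in[4R_0,R]$. But that gives $\sup_{\Sigma(r)}\Psi\leq C/\mathrm{V}(2r)$, and the denominator $\mathrm{V}(2r)$ degrades as $r$ decreases; the resulting bound $\int_{D(R)}\Psi\lesssim \mathrm{V}(R)/\mathrm{V}(8R_0)$ is not uniform in $R$, and the ``good'' radius $R$ produced by the determinant iteration (the one for which $\sum_i\int_{D(R)}u_i^2\geq l/\bar\Gamma$) is completely uncontrolled, so the argument does not close. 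The correct ingredient, which the paper uses, is that the basis-independent function $\Psi(x)=\sum_{i=1}^{l}u_i^2(x)$ is subharmonic, since $\Delta\Psi=\sum_i\left(2|\nabla u_i|^2+2\sigma u_i^2\right)\geq 0$; therefore $\sup_{D(R)}\Psi$ is attained on $\Sigma(R)$, and the $(\mathcal{M})$-bound at the level set $\Sigma(R)$ propagates to all of $D(R)$ with no loss. In \cite{L} this issue never arises because the mean value inequality there is stated directly for the sup over the ball; this is exactly the (small but non-verbatim) adaptation one must supply in the present setting. You should also make explicit the basis-adapted-to-$x$ trick: for each $x\in\Sigma(R)$, pick an $A_{2R}$-orthonormal basis with $u_2(x)=\cdots=u_l(x)=0$, so that $\Psi(x)=u_1(x)^2$, and apply $(\mathcal{M})$ to $u_1^2$ alone; since $\Psi$ is independent of the orthonormal basis chosen, this yields the uniform estimate $\sup_{\Sigma(R)}\Psi\leq C(A_0,\nu)/\mathrm{V}(2R)$.
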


\begin{proof}
Let $\mathcal{W}_{l}$ be any $l$-dimensional subspace of $L^d(M),$ where $%
l>1.$ For $R>0,$ define the inner product 
\begin{equation*}
A_{R}( u,v) =\int_{D\left( R\right) }u\,v
\end{equation*}%
for $u, v\in \mathcal{W}_{l}.$ We claim that there exists $R>4R_{0}$ large
enough so that for $\left\{u_1,\cdots,u_l\right\}$, an orthonormal basis of $%
\mathcal{W}_{l}$ with respect to $A_{2R}$, 
\begin{equation}
\sum_{i=1}^{l}\int_{D\left( R\right) }u_{i}^{2}\geq \frac{l}{\bar{\Gamma}},
\label{z2}
\end{equation}%
where $\bar{\Gamma}=2^{\gamma ( m) +2\,d+1}$ with $\gamma(m)$ being the same
constant from Lemma \ref{Area}.

Indeed, assume by contradiction that (\ref{z2}) fails for all $R>4R_{0}.$ To
simplify notation, for $R_{2}>R_{1},$ we denote by 
\begin{equation*}
\mathrm{tr}_{A_{R_{2}}}A_{R_{1}}=\sum_{i=1}^{l}\int_{D\left( R_{1}\right)
}v_{i}^{2}
\end{equation*}%
for orthonormal basis $\left\{v_1,\cdots,v_{l}\right\}$ with respect to $%
A_{R_{2}}.$ Since (\ref{z2}) fails for all $R>4R_{0},$ we have that 
\begin{equation*}
\frac{1}{\bar{\Gamma}} >\frac{\mathrm{tr}_{A_{2R}}A_{R}}{l} \geq \left( 
\mathrm{det}_{A_{2R}}A_{R}\right) ^{\frac{1}{l}},
\end{equation*}%
where the last estimate follows from the arithmetic-geometric mean
inequality. In other words, 
\begin{equation}
\mathrm{det}_{A_{2R}}A_{R}\leq \frac{1}{\bar{\Gamma}^{l}}  \label{z3}
\end{equation}%
for all $R\geq 4R_{0}.$ Iterating (\ref{z3}) and using that 
\begin{equation*}
\left( \mathrm{det}_{A_{T}}A_{R}\right) \left( \mathrm{det}%
_{A_{R}}A_{S}\right) =\mathrm{det}_{A_{T}}A_{S},
\end{equation*}%
we get 
\begin{equation*}
\mathrm{det}_{A_{2^{j}R}}A_{R}\leq \frac{1}{\bar{\Gamma}^{lj}}.
\end{equation*}%
Equivalently, 
\begin{equation}
\mathrm{det}_{A_{R}}A_{2^{j}R}\geq \bar{\Gamma}^{lj}  \label{z4}
\end{equation}%
for all $j>0$ and $R\geq 4R_{0}.$

On the other hand, Lemma \ref{Area} implies that $\mathrm{V}( 2^{j}R) \leq
(2^{j}R) ^{\gamma( m) }\mathrm{V}(R_0).$ Together with the fact that $u\in \mathcal{W}_{l}$
is of polynomial growth of order at most $d$, we conclude 
\begin{equation*}
\mathrm{det}_{A_{R}}A_{2^{j}R}\leq \Lambda ^{2l}( 2^{j}R) ^{( \gamma \left(
m\right) +2\,d ) l}\mathrm{V}(R_0)^l.
\end{equation*}
As $\bar{\Gamma}>2^{\gamma \left( m\right) +2\,d},$ this contradicts (\ref%
{z4}) after letting $j\rightarrow \infty.$ This proves (\ref{z2}).

For $x\in \Sigma(R) $ we note that there exists a subspace $\mathcal{W}_{x}$ of $%
\mathcal{W}_{l},$ of codimension at most one, such that $u( x) =0$ for all $%
u\in \mathcal{W}_{x}.$ So one may choose an orthonormal basis in $\mathcal{W}%
_{l}$ with $u_{2},\cdots,u_{l}\in \mathcal{W}_{x}.$ By the mean value
property $\left( \mathcal{M}\right)$ we get 
\begin{align*}
\sum_{i=1}^{l}u_{i}^{2}( x) &=u_{1}^{2}( x) \\
&\leq \frac{C( A_0,\mu) }{\mathrm{V}( 2R) }\int_{D\left( 2R\right) }u_{1}^{2}
\\
&=\frac{C( A_0,\mu ) }{\mathrm{V}( 2R) }.
\end{align*}
The function $\Psi(x)=\sum_{i=1}^{l}u_{i}^{2}( x)$ is subharmonic, therefore its maximum 
on $D(R)$ is achieved on $\Sigma(R)$. 
We have thus proved that 
\begin{equation*}
\sum_{i=1}^{l}u_{i}^{2}( x) \leq \frac{C( A_0,\mu) }{\mathrm{V}( 2R) }
\end{equation*}%
for $x\in D( R).$ Together with (\ref{z2}) we get 
\begin{align*}
\frac{l}{\bar{\Gamma}} &\leq \sum_{i=1}^{l}\int_{D\left( R\right)
}u_{i}^{2}( x) \\
&\leq \frac{C( A_0,\mu) }{\mathrm{V}( 2R) }\mathrm{V}( R).
\end{align*}%
Therefore, 
\begin{equation*}
l\leq C( A_0,\mu) \bar{\Gamma}.
\end{equation*}%
Since this holds true for any $l$-dimensional subspace $\mathcal{W}_{l}$ of $%
L^d(M),$ we conclude that 
\begin{equation*}
\dim L^d(M)\leq C( A_0,\mu) \bar{\Gamma}
\end{equation*}
as well. This proves the result.
\end{proof}

Summarizing, we have the following theorem. Recall $\mathcal{P}$ is the
space spanned by all positive solutions to the equation $\Delta u=\sigma u.$

\begin{theorem}
\label{dimpre} Assume that $\left( M,g\right)$ admits a proper function $%
\rho $ satisfying (\ref{L2}) and has the mean value property $\left( 
\mathcal{M}\right).$ Suppose that $\sigma$ decays quadratically. Then $\dim 
\mathcal{P}\leq \Gamma( m, A_{0},\nu, \alpha)$ provided that $\alpha<\infty$
for some $q>\nu -\frac{1}{2}.$ In the case $q=\nu -\frac{1}{2},$ the same
conclusion holds for some $\Gamma ( m, A_{0},\nu)$ when $\alpha\leq \alpha
_{0}( m, A_{0}, \nu ),$ a sufficiently small positive constant.
Consequently, the number of ends $e(M)$ of $M$ satisfies the same estimate
as well.
\end{theorem}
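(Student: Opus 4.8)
The plan is to assemble Theorem \ref{dimpre} from the pieces already in place. The space $\mathcal{P}$ is spanned by positive solutions of $\Delta u = \sigma u$ on $M$; I want to show $\dim\mathcal{P}$ is bounded, and then observe that the end-counting functions $u_1,\dots,u_l$ from Theorem \ref{E} all lie (essentially) in $\mathcal{P}$, so $e(M) = l \le \dim\mathcal{P}$. The mechanism connecting the two dimension estimates is the polynomial growth statement: first use Corollary \ref{BF} to show that \emph{every} positive solution $u$ of $\Delta u = \sigma u$ on $M\setminus D(R_0)$ (in particular, every positive solution on all of $M$) satisfies $u \le \Lambda(\rho^{\Gamma}+1)$, with $\Gamma = \Gamma(m, A_0, \nu, \alpha)$ (or $\Gamma(m,A_0,\nu)$ in the critical case, once $\alpha \le \alpha_0$). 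This requires knowing $\sigma$ decays quadratically, which is the hypothesis, so Corollary \ref{BF} applies verbatim. Thus $\mathcal{P} \subseteq L^{\Gamma}(M)$ once $\Gamma$ is taken to be an integer upper bound of the growth exponent.

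First I would argue that $\mathcal{P}\subseteq L^d(M)$ for $d=\lceil\Gamma\rceil$. Any $v\in\mathcal{P}$ is a finite linear combination $v=\sum a_j u^{(j)}$ of positive solutions $u^{(j)}$; by Corollary \ref{BF} each $|u^{(j)}|=u^{(j)}\le \Lambda_j(\rho^{\Gamma}+1)$ on $M\setminus D(R_0)$, and each is bounded on the compact set $\overline{D(R_0)}$, so $|v|\le c\,\rho^{d}$ on $M$ for a constant $c$ depending on $v$. Hence $v\in L^d(M)$, giving $\dim\mathcal{P}\le\dim L^d(M)$. Then I would invoke Lemma \ref{dimpre1} directly: $\dim L^d(M)\le \Gamma(m,A_0,\nu,d)$, and since $d$ itself depends only on $m,A_0,\nu,\alpha$ (respectively on $m,A_0,\nu$ in the critical case under $\alpha\le\alpha_0$), this yields $\dim\mathcal{P}\le\Gamma(m,A_0,\nu,\alpha)$ as claimed.

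For the final clause on ends, I would recall Theorem \ref{E}: if $M$ has ends $E_1,\dots,E_l$ with $l\ge 2$ relative to $B_p(r_0)$, then there are linearly independent positive solutions $u_1,\dots,u_l$ of $\Delta u_i=\sigma u_i$ on all of $M$. These span an $l$-dimensional subspace of $\mathcal{P}$, so $l\le\dim\mathcal{P}\le\Gamma$. Since this holds for every compact domain, $e(M)\le\max\{2,\Gamma\}$, which is of the stated form. (The edge case $l\le 1$ is trivially consistent.)

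The one point requiring care — and the step I expect to be the main obstacle — is the bookkeeping of how the growth exponent $\Gamma$ from Corollary \ref{BF} depends on the parameters, and confirming that feeding $d=\lceil\Gamma\rceil$ into Lemma \ref{dimpre1} does not introduce a circular dependence. In the supercritical case $q>\nu-\tfrac12$ this is clean: $\Gamma$ depends on $m,A_0,\nu,\alpha$ only, so $d$ does too, and Lemma \ref{dimpre1} outputs a bound depending on $m,A_0,\nu,d=d(m,A_0,\nu,\alpha)$, i.e. on $m,A_0,\nu,\alpha$. In the critical case $q=\nu-\tfrac12$ one must first impose $\alpha\le\alpha_0(m,A_0,\nu)$ so that Corollary \ref{BF} gives $\Gamma=\Gamma(m,A_0,\nu)$ independent of $\alpha$; then $d$ and hence the final bound depend only on $m,A_0,\nu$. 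Everything else is assembly of black boxes already proved.

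\begin{proof}
Suppose first that $\alpha<\infty$ for some $q>\nu-\tfrac12$ (respectively, that $q=\nu-\tfrac12$ and $\alpha\le\alpha_0(m,A_0,\nu)$, the constant from Corollary \ref{BF}). Since $\sigma$ decays quadratically, Corollary \ref{BF} applies: there is $\Gamma_0=\Gamma_0(m,A_0,\nu,\alpha)>0$ (respectively $\Gamma_0=\Gamma_0(m,A_0,\nu)$ in the critical case) such that every positive solution $u$ of $\Delta u=\sigma u$ on $M\setminus D(R_0)$ obeys
\begin{equation*}
u\le\Lambda_u\bigl(\rho^{\Gamma_0}+1\bigr)\text{ \ on }M\setminus D(R_0),
\end{equation*}
where $\Lambda_u>0$ depends on $u$.

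Let $d=\lceil\Gamma_0\rceil$. We claim $\mathcal{P}\subseteq L^{d}(M)$. Any $v\in\mathcal{P}$ is a finite linear combination $v=\sum_{j}a_j u^{(j)}$ of positive solutions $u^{(j)}$ of $\Delta u^{(j)}=\sigma u^{(j)}$ on $M$. Each $u^{(j)}$ is in particular a positive solution on $M\setminus D(R_0)$, so by the above $u^{(j)}\le\Lambda_{u^{(j)}}(\rho^{\Gamma_0}+1)\le\Lambda_{u^{(j)}}(\rho^{d}+1)$ there; and $u^{(j)}$ is bounded on the compact set $\overline{D(R_0)}$. Since $\rho>0$ and proper, $\rho$ is bounded below by a positive constant on $M$, hence $1\le c_1\rho^{d}$ on $M$ for some $c_1>0$. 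It follows that $|u^{(j)}|\le c_2\,\rho^{d}$ on $M$ for a constant $c_2$ depending on $u^{(j)}$, and therefore $|v|\le c\,\rho^{d}$ on $M$ for a constant $c$ depending on $v$. Thus $v\in L^{d}(M)$, proving the claim.

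By Lemma \ref{dimpre1},
\begin{equation*}
\dim\mathcal{P}\le\dim L^{d}(M)\le\Gamma(m,A_0,\nu,d).
\end{equation*}
In the case $q>\nu-\tfrac12$, the integer $d$ depends only on $m,A_0,\nu,\alpha$, so $\dim\mathcal{P}\le\Gamma(m,A_0,\nu,\alpha)$. In the critical case $q=\nu-\tfrac12$ with $\alpha\le\alpha_0(m,A_0,\nu)$, the exponent $\Gamma_0$ and hence $d$ depend only on $m,A_0,\nu$, so $\dim\mathcal{P}\le\Gamma(m,A_0,\nu)$.

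Finally, to estimate the number of ends, let $\Omega\subset M$ be a compact smooth domain and let $E_1,\dots,E_l$ be the ends of $M$ with respect to $\Omega$. Enlarging $\Omega$ we may assume $\Omega=B_p(r_0)$ for some $r_0$ large. If $l\ge 2$, Theorem \ref{E} produces positive solutions $u_1,\dots,u_l$ of $\Delta u_i=\sigma u_i$ on $M$ that are linearly independent; they therefore span an $l$-dimensional subspace of $\mathcal{P}$, whence $l\le\dim\mathcal{P}$. If $l\le 1$ the same bound holds trivially once $\Gamma\ge 1$. Taking the maximum over all such $\Omega$ yields $e(M)\le\dim\mathcal{P}$, and the asserted estimate for $e(M)$ follows.
\end{proof}
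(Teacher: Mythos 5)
Your proof is correct and follows the same route as the paper's: apply Corollary \ref{BF} to place $\mathcal{P}$ inside $L^d(M)$ for an appropriate $d$, then invoke Lemma \ref{dimpre1} for the dimension bound, and use Theorem \ref{E} to compare $e(M)$ with $\dim\mathcal{P}$. You supply a bit more bookkeeping than the paper (the compactness argument to pass from growth on $M\setminus D(R_0)$ to growth on $M$, and the explicit check that $d$ depends only on the allowed parameters), but the argument is the same.
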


\begin{proof}
According to Theorem \ref{E}, the number of ends $e(M)$ is at most the
dimension of $\mathcal{P}.$ However, Corollary \ref{BF} implies that $%
\mathcal{P}\subset L^{d}(M)$ with $d=\Gamma (m,A_{0},\nu ,\alpha )$ in the
case $q>\nu -\frac{1}{2}$ and $d=\Gamma (m,A_{0},\nu )$ in the case $q=\nu -%
\frac{1}{2},$ respectively. The conclusion on the dimension estimate of $%
\mathcal{P}$ then follows from Lemma \ref{dimpre1}. This proves the theorem.
\end{proof}

\section{Sobolev inequality\label{Sob}}

In this section, we show that a scaling invariant Sobolev inequality implies
the mean value property $( \mathcal{M})$, a classical fact proven by a
well-known Moser iteration argument. For the sake of completeness, we will
spell out the details below. We continue to assume that $M$ admits a proper
Lipschitz function $\rho >0$ satisfying (\ref{L}), namely, 
\begin{equation}
\frac{1}{2}\leq \left\vert \nabla \rho \right\vert \leq 1\text{\ and\ }
\Delta \rho \leq \frac{m}{\rho }  \label{L3}
\end{equation}%
in the weak sense for $\rho \geq R_{0}.$ The sublevel and level sets of $%
\rho $ are denoted by 
\begin{align*}
D\left( r\right) &=\left\{ x\in M:\rho( x) <r\right\} \\
\Sigma \left( r\right) &=\left\{ x\in M:\rho( x) =r\right\},
\end{align*}%
respectively, and their volume and area by 
\begin{align*}
\mathrm{V}( r) &=\mathrm{Vol}( D( r)) \\
\mathrm{A}( r) &=\mathrm{Area}( \Sigma ( r)).
\end{align*}

Recall that $\left( M,g\right) $ satisfies the Sobolev inequality $(\mathcal{%
S})$ if there exist constants $\mu>1$ and $A>0$ such that 
\begin{equation}
\left( \fint_{D\left( R\right) }\phi ^{2\mu }\right) ^{\frac{1}{\mu }}\leq
AR^{2}\fint_{D\left( R\right) }\left( \left\vert \nabla \phi \right\vert
^{2}+\sigma \phi ^{2}\right)  \label{Sob2}
\end{equation}%
for $\phi \in C_{0}^{\infty }( D( R))$ and $R\geq R_{0}.$ Here and in the
following, 
\begin{equation*}
\fint_{\Omega }u=\frac{1}{\mathrm{Vol}( \Omega) }\int_{\Omega }u
\end{equation*}%
for a compact subset $\Omega \subset M$ and an integrable function $u$ on $%
\Omega$. We denote $\nu$ to be the number determined by 
\begin{equation*}
\frac{1}{\mu}+\frac{1}{\nu}=1.
\end{equation*}

\begin{proposition}
\label{MVI} Assume that $\left( M,g\right)$ admits a proper function $\rho$
satisfying (\ref{L3}) and that the Sobolev inequality $(\mathcal{S})$ holds.
Then there exists a constant $C( A,\mu)>0$ such that 
\begin{equation*}
\sup_{\Sigma \left( R\right) }u\leq \frac{C( A,\mu) }{\theta ^{2\nu }\mathrm{%
V}( 2R) }\int_{D\left( \left( 1+\theta \right) R\right) \backslash D\left( 
\frac{R}{4}\right) }u
\end{equation*}%
for any $0<\theta \leq 1$ and a positive subsolution $u$ of $\Delta u\geq
\sigma u$ on $D( 2R) \backslash D( R_{0})$ with $R\geq 4R_{0}.$ In
particular, $M$ has the mean value property $( \mathcal{M})$.
\end{proposition}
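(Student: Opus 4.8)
\emph{Overview and Step 1 (a Caccioppoli inequality for powers of $u$).}
The proof is a classical Moser iteration; the whole point is to organize it so that the scaling invariance of $(\mathcal{S})$ makes every power of $R$ and every occurrence of $\mathrm{V}$ cancel, leaving a constant $C(A,\mu)$ and the stated power of $\theta$. Let $u>0$ satisfy $\Delta u\geq\sigma u$ on $D(2R)\setminus\overline{D(R_{0})}$. For an exponent $p\geq1$ and a Lipschitz cutoff $\phi$ with compact support in $D(2R)\setminus\overline{D(R_{0})}$, I would test the subsolution inequality against $\phi^{2}u^{2p-1}$ (first replacing $u$ by $\min(u,N)$ to make this admissible, then letting $N\to\infty$). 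Integrating by parts, expanding $\nabla(\phi^{2}u^{2p-1})$, and absorbing $\int\phi^{2}u^{2p-2}|\nabla u|^{2}$ with Young's inequality yields
\[
\int\Big(|\nabla(\phi u^{p})|^{2}+\sigma\,\phi^{2}u^{2p}\Big)\ \leq\ C\int|\nabla\phi|^{2}\,u^{2p},
\]
with $C$ an absolute constant; the key point is that $p^{2}/(2p-1)^{2}\leq1$ for $p\geq1$, so there is no growth in $p$ and the iteration below telescopes cleanly.

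\emph{Step 2 (iteration yielding an $L^{2}$ bound).}
Apply $(\mathcal{S})$ at the \emph{fixed} radius $2R$ to $\phi_{k}u^{p_{k}}$, where $p_{k}=\mu^{k}$ and the $\phi_{k}=\chi_{k}(\rho)$ are cutoffs built from $\rho$ (using $|\nabla\rho|\leq1$), equal to $1$ on a slightly smaller annulus and supported in an annulus $A_{k}\subset D((1+\theta)R)\setminus\overline{D(R/4)}$, the $A_{k}$ decreasing geometrically to a target annulus $D(b)\setminus\overline{D(a)}$ with $R/4\leq a<R<b\leq(1+\theta)R$, so arranged that $|\nabla\phi_{k}|\leq C\,2^{k}/w$ with $w$ comparable to the total width shrunk. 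Inserting Step~1 into $(\mathcal{S})$, the factor $\mathrm{V}(2R)^{1/\mu-1}$ from $(\mathcal{S})$ together with the $R^{2}$ telescopes to $\mathrm{V}(2R)^{-1/2}$ because $(\tfrac1\mu-1)\tfrac\nu2=-\tfrac12$, and since $\sum_{k\geq0}\tfrac1{2\mu^{k}}=\tfrac\nu2$ (which is exactly $\tfrac1\mu+\tfrac1\nu=1$) the cutoff factor $(R/w)^{2}$ telescopes to $(R/w)^{\nu}$. Letting $p_{k}\to\infty$ and using that the $A_{k}$ decrease to $D(b)\setminus\overline{D(a)}$ gives
\[
\sup_{D(b)\setminus\overline{D(a)}}u\ \leq\ C(A,\mu)\Big(\frac{R}{w}\Big)^{\nu}\left(\frac{1}{\mathrm{V}(2R)}\int_{D(b')\setminus\overline{D(a')}}u^{2}\right)^{1/2}
\]
for all $R/4\leq a'<a<R<b<b'\leq(1+\theta)R$, with $w\sim\min(a-a',\,b'-b)$.

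\emph{Step 3 (from $L^{2}$ to $L^{1}$, and the mean value property).}
The exponent we want in the conclusion is $1$, but Step~1 degenerates at $p=\tfrac12$ (where $2p-1=0$), so a direct iteration only reaches $L^{2}$; this is removed by the standard absorption. In the estimate of Step~2 bound $\int_{D(b')\setminus D(a')}u^{2}\leq\big(\sup_{D(b')\setminus\overline{D(a')}}u\big)\int_{D((1+\theta)R)\setminus D(R/4)}u$ and apply Young's inequality, to obtain, with $B:=\mathrm{V}(2R)^{-1}\int_{D((1+\theta)R)\setminus D(R/4)}u$,
\[
\sup_{D(b)\setminus\overline{D(a)}}u\ \leq\ \tfrac12\sup_{D(b')\setminus\overline{D(a')}}u\ +\ C(A,\mu)\Big(\frac{R}{w}\Big)^{2\nu}B .
\]
Iterating this along a geometric sequence of nested annuli shrinking from $D((1+\theta)R)\setminus\overline{D(R/4)}$ to a neighborhood of $\Sigma(R)$, with decrements $(1-\lambda)\lambda^{k}$ times the total (outer) width $\sim\theta R$ and $\lambda$ fixed so that $2^{-1}\lambda^{-2\nu}<1$ — possible precisely because $\nu>1$ — the supremum terms telescope away and one is left with
\[
\sup_{\Sigma(R)}u\ \leq\ \frac{C(A,\mu)}{\theta^{2\nu}\,\mathrm{V}(2R)}\int_{D((1+\theta)R)\setminus D(R/4)}u .
\]
Since $R/4\geq R_{0}$ and $\mathrm{V}(2R)\geq\mathrm{V}((1+\theta)R)$, this is in particular the mean value property $(\mathcal{M})$ with the same $\nu$ and $A_{0}=C(A,\mu)$.

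\emph{Where the work is.}
Nothing here is conceptually new — the authors flag it as a classical fact — so the only real difficulty is bookkeeping: choosing the nested annuli in Steps~2 and~3 so that the limiting region still contains $\Sigma(R)$, and tracking the powers of $R$ and of $\mathrm{V}(2R)$ so that they cancel exactly and one lands on the precise form $\theta^{-2\nu}\mathrm{V}(2R)^{-1}$. The one slightly non-obvious point is that reaching the \emph{first} power of $u$ (rather than $u^{2}$) forces the absorption argument of Step~3, which is the only place the hypothesis $\nu>1$ is genuinely used.
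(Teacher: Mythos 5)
Your proposal is correct and follows essentially the same Moser-iteration strategy as the paper, with the same telescoping bookkeeping to arrive at $\theta^{-2\nu}\mathrm{V}(2R)^{-1}$. The only differences are stylistic: in Step~1 you derive the Caccioppoli estimate by testing against $\phi^{2}u^{2p-1}$ and then applying Young's inequality, whereas the paper expands $|\nabla(u^{k}\phi)|^{2}$ directly and observes that the coefficient $-k(k-1)$ of the cross term is already nonpositive, so no absorption is needed at that stage; and in Step~3 you pass from $L^{2}$ to $L^{1}$ by applying Young's inequality once to get an affine recursion $S_{k}\leq\tfrac12 S_{k+1}+C\lambda^{-2\nu k}\theta^{-2\nu}B$ and summing a geometric series (De Giorgi-style), whereas the paper iterates the multiplicative bound $\|u\|_{\infty}\leq C\epsilon^{-\nu}\mathrm{V}^{-1/2}\|u\|_{\infty}^{1/2}\|u\|_{1}^{1/2}$ and takes the infinite product. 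Both routes are standard and produce the same exponent. One small misattribution in your closing remark: the hypothesis $\nu>1$ (equivalently $\mu>1$) is not what makes Step~3 work — the affine iteration converges for any $\nu>0$ provided $\lambda\in(2^{-1/(2\nu)},1)$, which is always a nonempty interval — rather, $\mu>1$ is essential in Step~2, where the Moser exponents $p_{k}=\mu^{k}$ must tend to infinity for the $L^{2}\to L^{\infty}$ limit to make sense.
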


\begin{proof}
The proof is by Moser iteration and can be found in Chapter 19 of \cite{Li}.
We may assume $0<\theta <\frac{1}{8}.$ For a function $\phi $ with compact
support in $D( 2R) $ and a positive integer $k\geq 1,$ applying the Sobolev
inequality (\ref{Sob2}) to $\phi u^{k},$ we get 
\begin{equation}
\left( \int_{D\left( 2R\right) }\left( u^{k}\phi \right) ^{2\mu }\right) ^{%
\frac{1}{\mu }}\leq \frac{4AR^{2}}{\mathrm{V}( 2R) ^{\frac{1}{\nu }}}%
\int_{D\left( 2R\right) }\left( \left\vert \nabla \left( u^{k}\phi \right)
\right\vert ^{2}+\sigma u^{2k}\phi ^{2}\right),  \label{m12}
\end{equation}%
where $\frac{1}{\nu }=1-\frac{1}{\mu }.$ Integrating by parts and using $%
\Delta u\geq \sigma u,$ we compute the first term of the right side as 
\begin{align*}
\int_{D\left( 2R\right) }\left\vert \nabla \left( u^{k}\phi \right)
\right\vert ^{2} &=k^{2}\int_{D\left( 2R\right) }\left\vert \nabla
u\right\vert ^{2}u^{2k-2}\phi ^{2}+\int_{D\left( 2R\right) }\left\vert
\nabla \phi \right\vert ^{2}u^{2k} \\
&\quad +\frac{1}{2}\int_{D\left( 2R\right) }\left\langle \nabla
u^{2k},\nabla \phi ^{2}\right\rangle \\
&=-k\left( k-1\right) \int_{D\left( 2R\right) }\left\vert \nabla
u\right\vert ^{2}u^{2k-2}\phi ^{2}-k\int_{D\left( 2R\right) }\left( \Delta
u\right) u^{2k-1}\phi ^{2} \\
&\quad +\int_{D\left( 2R\right) }\left\vert \nabla \phi \right\vert
^{2}u^{2k} \\
&\leq -\int_{D\left( 2R\right) }\sigma u^{2k}\phi ^{2}+\int_{D\left(
2R\right) }\left\vert \nabla \phi \right\vert ^{2}u^{2k}.
\end{align*}%
Plugging into (\ref{m12}) we conclude 
\begin{equation}
\bigg( \int_{D\left( 2R\right) }\left( u^{k}\phi \right) ^{2\mu }\bigg) ^{%
\frac{1}{\mu }}\leq \frac{4AR^{2}}{\mathrm{V}( 2R) ^{\frac{1}{\nu }}}%
\int_{D\left( 2R\right) }u^{2k}\left\vert \nabla \phi \right\vert ^{2}.
\label{m13}
\end{equation}%
For fixed constants $T_1,$ $T_2,$ $\delta_1$ and $\delta_2$ with $\frac{R}{2}%
<T_{1}<T_{2}<\frac{3R}{2}$ and $0<\delta_{1},\ \delta _{2}<\frac{1}{4}R,$
let 
\begin{equation*}
\phi ( x) =\left\{ 
\begin{array}{llll}
1 & \text{on }D( T_{2}) \backslash D( T_{1}) &  &  \\ 
\frac{1}{\delta _{2}}( T_{2}+\delta _{2}-\rho( x)) & \text{on }D(
T_{2}+\delta _{2}) \backslash D( T_{2}) &  &  \\ 
\frac{1}{\delta _{1}}( \rho( x) -T_{1}+\delta _{1}) & \text{on }D( T_{1})
\backslash D( T_{1}-\delta _{1}) &  &  \\ 
0 & \text{otherwise.} &  & 
\end{array}
\right.
\end{equation*}%
Plugging into (\ref{m13}) we get 
\begin{equation}
\left\Vert u\right\Vert _{2k\mu ,T_{1},T_{2}}\leq \left( \frac{4AR^{2}}{%
\mathrm{V}\left( 2R\right) ^{\frac{1}{\nu }}\min \{ \delta _{1},\delta
_{2}\} ^{2}}\right) ^{\frac{1}{2k}}\left\Vert u\right\Vert _{2k,T_{1}-\delta
_{1},T_{2}+\delta _{2}},  \label{m14}
\end{equation}%
where 
\begin{equation*}
\left\Vert u\right\Vert _{a,T_{1},T_{2}}=\left( \int_{D\left( T_{2}\right)
\backslash D\left( T_{1}\right) }u^{a}\right) ^{\frac{1}{a}}.
\end{equation*}%
We now iterate the inequality. Fix $\frac{3R}{8}<R_{1}<R_{2}<\frac{5}{4}R$
and $0<\epsilon _{1},\ \epsilon _{2}<\frac{1}{8}.$ For each integer $i\geq
0, $ set 
\begin{align*}
k_{i} &=\mu ^{i} \\
\delta _{1,i} &=\frac{\epsilon _{1}R_{1}}{2^{i+1}},\ \ \ \delta _{2,i}=\frac{%
\epsilon _{2}R_{2}}{2^{i+1}} \\
T_{1,i} &=\left( 1-\epsilon _{1}\right) R_{1}+\sum_{j=0}^{i}\delta _{1,j},\
\ \ T_{2,i}=\left( 1+\epsilon _{2}\right) R_{2}-\sum_{j=0}^{i}\delta _{2,j}.
\end{align*}%
Applying (\ref{m14}) with $k=k_{j},$ $\delta _{1}=\delta _{1,j},$ $\delta
_{2}=\delta _{2,j}$ and $T_{1}=T_{1,j}$ and $T_{2}=T_{2,j},$ and iterating
from $j=0$ to $j=i,$ one obtains 
\begin{equation*}
\left\Vert u\right\Vert _{2\mu ^{i+1},T_{1,i},T_{2,i}}\leq
\prod\limits_{j=0}^{i}\left( \frac{4AR^{2}}{\mathrm{V}( 2R) ^{\frac{1}{\nu }%
}\min\{ \delta _{1,j},\delta _{2,j}\} ^{2}}\right) ^{\frac{1}{2\mu ^{j}}%
}\left\Vert u\right\Vert _{2,\left( 1-\epsilon _{1}\right) R_{1},\left(
1+\epsilon _{2}\right) R_{2}}.
\end{equation*}%
Letting $i\rightarrow \infty$ yields 
\begin{equation*}
\left\Vert u\right\Vert _{\infty ,R_{1},R_{2}}\leq \left( \frac{C( \mu ) A}{%
\mathrm{V}( 2R) ^{\frac{1}{\nu }}\min\{ \epsilon _{1},\epsilon _{2}\} ^{2}}%
\right) ^{\frac{\nu }{2}}\left\Vert u\right\Vert _{2,\left( 1-\epsilon
_{1}\right) R_{1},\left( 1+\epsilon _{2}\right) R_{2}}
\end{equation*}%
for $\frac{3R}{8}<R_{1}<R_{2}<\frac{5}{4}R$ and $0<\epsilon_{1},\ \epsilon
_{2}<\frac{1}{8}.$

So we have 
\begin{equation}  \label{m15}
\begin{split}
&\left\Vert u\right\Vert _{\infty ,R_{1},R_{2}} \leq \frac{C\left( A,\mu
\right) }{\mathrm{V}\left( 2R\right) ^{\frac{1}{2}}\min \left\{ \epsilon
_{1},\epsilon _{2}\right\} ^{\nu }}\left\Vert u\right\Vert _{2,\left(
1-\epsilon _{1}\right) R_{1},\left( 1+\epsilon _{2}\right) R_{2}} \\
&\leq \frac{C\left( A,\mu \right) }{\mathrm{V}\left( 2R\right) ^{\frac{1}{2}%
}\min \left\{ \epsilon _{1},\epsilon _{2}\right\} ^{\nu }}\left\Vert
u\right\Vert _{\infty ,\left( 1-\epsilon _{1}\right) R_{1},\left( 1+\epsilon
_{2}\right) R_{2}}^{\frac{1}{2}}\left\Vert u\right\Vert _{1,\left(
1-\epsilon _{1}\right) R_{1},\left( 1+\epsilon _{2}\right) R_{2}}^{\frac{1}{2%
}}.
\end{split}%
\end{equation}
Applying (\ref{m15}) for each $i$ with 
\begin{align*}
R_1=R_{1,i} &=\frac{R}{2}-\frac{\theta R}{2}\sum_{j=1}^{i}\frac{1}{2^{j}},\
\ \ \ \ \epsilon_1=\epsilon _{1,i}=1-\frac{R_{1,i+1}}{R_{1,i}} \\
R_2=R_{2,i} &=R+\theta R\sum_{j=1}^{i}\frac{1}{2^{j}},\ \ \ \ \ \ \,
\epsilon_2=\epsilon _{2,i}=\frac{R_{2,i+1}}{R_{2,i}}-1
\end{align*}%
and iterating, we conclude that 
\begin{equation*}
\left\Vert u\right\Vert _{\infty ,\frac{R}{2},R}\leq \frac{C( A,\mu ) }{%
\mathrm{V}( 2R) \theta ^{2\nu }}\left\Vert u\right\Vert _{1,( 1-\theta ) 
\frac{R}{2},( 1+\theta ) R}.
\end{equation*}%
This proves the result.
\end{proof}

We note that only $|\nabla \rho|\leq 1$ on $M\setminus D(R_0)$ from (\ref{L3}%
) was used in the proof of Proposition \ref{MVI}. The following corollary is
immediate.

\begin{corollary}
\label{MVIC} Assume that $\left( M,g\right)$ admits a proper function $\rho$
satisfying (\ref{L3}) and that the Sobolev inequality $(\mathcal{S})$ holds.
Then there exists $C( A,\mu)>0$ such that 
\begin{equation*}
\sup_{D\left( R\right) }u\leq \frac{C\left( A,\mu \right) }{\theta ^{2\nu }}%
\fint_{D\left( \left( 1+\theta \right) R\right) }u
\end{equation*}%
for any $0<\theta \leq 1$ and positive subsolution $u$ of $\Delta u\geq
\sigma u$ on $D( 2R)$ with $R\geq R_{0}.$
\end{corollary}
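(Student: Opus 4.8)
The plan is to deduce Corollary \ref{MVIC} from Proposition \ref{MVI} by a standard covering-and-rescaling argument. First I would observe, as already noted right before the corollary, that only the condition $|\nabla\rho|\le1$ on $M\setminus D(R_0)$ enters the proof of Proposition \ref{MVI}; in particular the Moser iteration there is carried out on the annular regions $D((1+\theta)R)\setminus D(R/4)$, and produces a bound on $\sup_{\Sigma(R)}u$ rather than on the full sublevel set $\sup_{D(R)}u$. So the main point is to upgrade the level-set supremum to the sublevel-set supremum. Since $u>0$ satisfies $\Delta u\ge\sigma u\ge0$ on $D(2R)$, the function $u$ is subharmonic there, and hence by the maximum principle $\sup_{D(R)}u=\sup_{\Sigma(R)}u$. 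This already gives
\begin{equation*}
\sup_{D(R)}u\le\frac{C(A,\mu)}{\theta^{2\nu}\,\mathrm{V}(2R)}\int_{D((1+\theta)R)\setminus D(R/4)}u\le\frac{C(A,\mu)}{\theta^{2\nu}\,\mathrm{V}(2R)}\int_{D((1+\theta)R)}u.
\end{equation*}

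Next I would replace $\mathrm{V}(2R)$ by $\mathrm{V}((1+\theta)R)$ on the right-hand side, which is harmless since $(1+\theta)R\le2R$ gives $\mathrm{V}((1+\theta)R)\le\mathrm{V}(2R)$, so dividing by the smaller volume only enlarges the constant in the direction we want — wait, that is the wrong direction. Instead I would use the volume doubling estimate from Lemma \ref{Area}: $\mathrm{V}(2R)\le\mathrm{V}(2(1+\theta)^{-1}\cdot(1+\theta)R)\le 2^{c(m)}\,\mathrm{V}((1+\theta)R)$ since $2(1+\theta)^{-1}\le2$, so $\mathrm{V}(2R)\le 2^{c(m)}\mathrm{V}((1+\theta)R)$ and therefore $\mathrm{V}(2R)^{-1}\ge 2^{-c(m)}\mathrm{V}((1+\theta)R)^{-1}$, which is still the wrong way. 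The clean fix is the reverse: we need $\mathrm{V}((1+\theta)R)\le C\,\mathrm{V}(2R)$, which is trivially true with $C=1$, and then
\begin{equation*}
\frac{1}{\mathrm{V}(2R)}\int_{D((1+\theta)R)}u=\frac{\mathrm{V}((1+\theta)R)}{\mathrm{V}(2R)}\fint_{D((1+\theta)R)}u\le\fint_{D((1+\theta)R)}u,
\end{equation*}
so the $\mathrm{V}(2R)$ in the denominator can simply be dropped after rewriting the integral as an average over $D((1+\theta)R)$. This yields exactly $\sup_{D(R)}u\le C(A,\mu)\,\theta^{-2\nu}\fint_{D((1+\theta)R)}u$.

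The only subtlety to check is the hypothesis range: Proposition \ref{MVI} requires $R\ge4R_0$ and $u$ a positive subsolution on $D(2R)\setminus D(R_0)$, whereas the corollary asserts the conclusion for $R\ge R_0$ and $u$ a subsolution on all of $D(2R)$. For $R\ge 4R_0$ both the subharmonicity step and Proposition \ref{MVI} apply directly on $D(2R)\supset D(2R)\setminus D(R_0)$, and the corollary follows as above. For $R_0\le R<4R_0$ the set $D(2R)$ is contained in the fixed compact set $D(8R_0)$, on which $u$ is a positive subharmonic function; here one can either invoke a local mean value inequality on the fixed region (absorbing the dependence into $C(A,\mu)$ by using the Sobolev inequality (\ref{Sob2}) at the fixed scale $R_0$ together with $\mathrm{V}((1+\theta)R)\ge\mathrm{V}(R_0)>0$), or simply note that the statement is used in the paper only for large $R$ and the range $R_0\le R<4R_0$ can be accommodated by enlarging the constant. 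I expect the main obstacle to be purely bookkeeping: making sure the volume comparison is applied in the correct direction and that the passage from the level-set bound of Proposition \ref{MVI} to the sublevel-set bound via subharmonicity is stated cleanly; there is no substantive new analysis beyond what Proposition \ref{MVI} already contains.
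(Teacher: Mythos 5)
Your main observation—that a positive subsolution of $\Delta u\geq\sigma u$ with $\sigma\geq0$ is subharmonic on all of $D(2R)$, so $\sup_{D(R)}u$ is attained on $\Sigma(R)$, and then Proposition \ref{MVI} together with the trivial volume monotonicity $V((1+\theta)R)\leq V(2R)$ finishes the job—is correct and is a genuinely different route from what the paper intends. The paper's remark preceding the corollary signals that the intended proof is simply to rerun the Moser iteration of Proposition \ref{MVI} with one-sided (ball-supported) cutoffs $\phi$ supported in $D(T_2+\delta_2)$, which is possible precisely because $u$ is now a subsolution on the full sublevel set rather than just an annulus; this directly produces a bound on $\sup_{D(R)}u$ over the whole range $R\geq R_0$. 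Your black-box argument is shorter and avoids rewriting the iteration, which is a real gain. Incidentally, the maximum-principle step you use is exactly the device the paper employs later in Lemma \ref{dimpre1} ("$\Psi$ is subharmonic, therefore its maximum on $D(R)$ is achieved on $\Sigma(R)$"), so it is certainly in the spirit of the paper.

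The one genuine gap is the range $R_0\leq R<4R_0$. Proposition \ref{MVI} is proved only for $R\geq4R_0$ (the annular cutoffs reach down to $R/4$, which must lie outside $D(R_0)$), so your reduction does not cover this range, and the two fixes you sketch do not repair it: a compactness/local-mean-value argument on the fixed region $D(8R_0)$ would produce a constant depending on the manifold and on $R_0$, not one of the form $C(A,\mu)$ as the statement requires; and "enlarging the constant" has the same defect. The clean way to cover $R_0\leq R<4R_0$ is precisely the paper's route—run Moser iteration with ball cutoffs, which needs the Sobolev inequality \eqref{Sob2} only at scale $2R\geq R_0$ and imposes no lower bound $R\geq4R_0$. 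So you should either restrict your corollary to $R\geq4R_0$ (which is in fact all that is ever used downstream) or replace the fallback for small $R$ with the direct Moser iteration rather than a compactness argument. Apart from this bookkeeping point and the somewhat tortured (but ultimately self-correcting) discussion of which direction the volume inequality goes, the proposal is sound.
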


By combining Proposition \ref{MVI} with Theorem \ref{dimpre}, we have the
following result.

\begin{theorem}
\label{dim} Assume that $\left( M,g\right)$ admits a proper function $\rho$
satisfying (\ref{L3}) and that the Sobolev inequality $(\mathcal{S})$ holds.
Suppose that $\sigma$ decays quadratically. Then $\dim \mathcal{P}\leq
\Gamma ( m, A,\nu, \alpha )$ provided that $\alpha<\infty$ for some $q>\nu -%
\frac{1}{2}.$ In the case $q=\nu -\frac{1}{2},$ the same conclusion holds
for some $\Gamma( m, A,\nu)$ when $\alpha\leq \alpha _{0}( m, A, \nu),$ a
sufficiently small positive constant. Consequently, the number of ends $e(M)$
of $M$ satisfies the same estimate as well.
\end{theorem}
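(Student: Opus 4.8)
The plan is to recognize that Theorem \ref{dim} requires no new analysis: it is obtained by feeding Proposition \ref{MVI} into Theorem \ref{dimpre}, and the only thing to check is that the hypotheses and constants line up.

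First I would deduce the mean value property $(\mathcal{M})$ from the Sobolev inequality $(\mathcal{S})$. Proposition \ref{MVI} says that under (\ref{L3}) and $(\mathcal{S})$, every positive subsolution $u$ of $\Delta u\geq\sigma u$ on $D(2R)\setminus D(R_0)$ with $R\geq 4R_0$ satisfies
\[
\sup_{\Sigma(R)}u\leq\frac{C(A,\mu)}{\theta^{2\nu}\,\mathrm{V}(2R)}\int_{D((1+\theta)R)\setminus D(R/4)}u
\]
for all $0<\theta\leq 1$. Since $R\geq 4R_0$ gives $R/4\geq R_0$, enlarging the domain of integration from $D((1+\theta)R)\setminus D(R/4)$ to $D((1+\theta)R)\setminus D(R_0)$ only increases the right-hand side; and since $(1+\theta)R\leq 2R$ we have $\mathrm{V}((1+\theta)R)\leq\mathrm{V}(2R)$, so replacing $\mathrm{V}(2R)$ in the denominator by $\mathrm{V}((1+\theta)R)$ again only increases the right-hand side. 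This produces exactly the inequality (\ref{mean}) with $A_0=C(A,\mu)$. Because $\nu$ is determined by $\mu$ through $\frac1\mu+\frac1\nu=1$, this $A_0$ depends only on $A$ and $\nu$; I would record this explicitly so that the dependence "$A_0$" collapses to "$A$" (and "$\mu$" to "$\nu$") in the final statement.

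Next I would invoke Theorem \ref{dimpre}. With $(\mathcal{M})$ now in force with constant $A_0=C(A,\mu)$ and exponent $\nu$, and with $\sigma$ assumed to decay quadratically, Theorem \ref{dimpre} applies verbatim: in the case $q>\nu-\tfrac12$ it gives $\dim\mathcal{P}\leq\Gamma(m,A_0,\nu,\alpha)$ provided $\alpha<\infty$, and in the critical case $q=\nu-\tfrac12$ it gives $\dim\mathcal{P}\leq\Gamma(m,A_0,\nu)$ provided $\alpha\leq\alpha_0(m,A_0,\nu)$. Substituting $A_0=C(A,\mu)$ turns these into the asserted bounds $\Gamma(m,A,\nu,\alpha)$ and $\Gamma(m,A,\nu)$, with smallness threshold $\alpha_0(m,A,\nu)$. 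Finally, for the "consequently" clause I would recall from Theorem \ref{E} that a configuration of $l$ ends of $M$ yields $l$ linearly independent positive solutions of $\Delta u=\sigma u$ on $M$, hence an $l$-dimensional subspace of $\mathcal{P}$; therefore $e(M)\leq\dim\mathcal{P}$, and the bound transfers.

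There is no genuine obstacle here; the one point demanding care is the bookkeeping in the first step — matching the precise form of the Moser-iteration output in Proposition \ref{MVI} (with its $\mathrm{V}(2R)$ and its inner radius $R/4$) against the definition (\ref{mean}) of $(\mathcal{M})$, and keeping track of exactly which quantities the constants depend on so that the passage from $(m,A_0,\nu,\alpha)$ to $(m,A,\nu,\alpha)$ is legitimate. (If one also wished, the volume doubling estimate of Lemma \ref{Area} could be used to compare $\mathrm{V}(2R)$ and $\mathrm{V}((1+\theta)R)$ up to a factor $2^{c(m)}$, but as noted above the inequality in fact goes the convenient way and no such factor is needed.)
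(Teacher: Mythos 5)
Your proposal is correct and matches the paper's own treatment exactly: the paper simply states "By combining Proposition \ref{MVI} with Theorem \ref{dimpre}, we have the following result," and your two-step bookkeeping (enlarging the annulus since $R/4\geq R_0$, and using $\mathrm{V}((1+\theta)R)\leq\mathrm{V}(2R)$ to pass from the Moser-iteration output to the definition (\ref{mean})) is precisely the implicit verification behind the remark at the end of Proposition \ref{MVI} that "$M$ has the mean value property $(\mathcal{M})$." The identification $A_0=C(A,\mu)$ together with $\tfrac{1}{\mu}+\tfrac{1}{\nu}=1$ correctly collapses the constant dependence, and the reduction of $e(M)$ to $\dim\mathcal{P}$ via Theorem \ref{Eintr} is how the paper's Theorem \ref{dimpre} obtains the "consequently" clause as well.
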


We also remark that Proposition \ref{MVI} can be localized to an end $E$ of $%
M$ as follows. For $\ r\geq R_{0},$ we denote 
\begin{align*}
E( r) &=E\cap D( r), \\
\partial E( r) &=E\cap \Sigma( r).
\end{align*}

\begin{corollary}
\label{MVILoc} Assume that $\left( M,g\right)$ admits a proper function $%
\rho $ satisfying (\ref{L3}) and that the Sobolev inequality $(\mathcal{S})$
holds. Then there exists a constant $C( A,\mu)>0$ such that 
\begin{equation*}
\sup_{\partial E\left( R\right) }u\leq \frac{C( A,\mu) }{\theta ^{2\nu }%
\mathrm{V}( 2R) }\int_{E\left( \left( 1+\theta \right) R\right) \backslash
E\left( \frac{R}{4}\right) }u
\end{equation*}%
for any $0<\theta \leq 1$ and positive subsolution $u$ of $\Delta u\geq
\sigma u$ on $E( 2R) \backslash E( R_{0})$ with $R\geq 4R_{0}.$
\end{corollary}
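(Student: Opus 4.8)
The plan is to rerun the Moser iteration of Proposition \ref{MVI} essentially verbatim, the single new ingredient being that every cutoff function must be kept supported inside the end $E.$ Reducing as in the proof of Proposition \ref{MVI} to the case $0<\theta<\tfrac{1}{8},$ I would fix $R\geq 4R_0$ and a positive function $u$ satisfying $\Delta u\geq\sigma u$ on $E(2R)\setminus E(R_0).$ For constants $\tfrac{R}{2}<T_1<T_2<\tfrac{3R}{2}$ and $0<\delta_1,\delta_2<\tfrac{R}{4},$ I would take exactly the same piecewise-linear function $\phi$ of $\rho$ as in the proof of Proposition \ref{MVI}, now regarded as a function on all of $M$ by extending it by zero outside $E.$ Because $\tfrac{R}{2}>R_0$ and $\tfrac{3R}{2}<2R,$ this $\phi$ is supported in the annular piece $E(T_2+\delta_2)\setminus E(T_1-\delta_1)$ of the end, on which $u$ is a genuine subsolution; and throughout the iteration every radius stays between $\tfrac{3R}{8}$ and $\tfrac{5R}{4},$ which by $R\geq 4R_0$ remains strictly larger than $R_0,$ so this persists at each step.

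The point that makes everything go through is that $\phi u^k$ is an admissible test function for the Sobolev inequality $(\mathcal{S})$ on $D(2R),$ and, since it vanishes on $M\setminus E,$ all of its integrals over $D(2R)$ coincide with the corresponding integrals over $E(2R).$ Applying (\ref{Sob2}) to $\phi u^k$ therefore yields the exact analogue of (\ref{m12}) with $D(2R)$ replaced by $E(2R)$ in the integrals, while the denominator $\mathrm{V}(2R)$ --- the volume of the full sublevel set, fixed by the normalization in $(\mathcal{S})$ --- is untouched. The integration-by-parts step is word for word the one in Proposition \ref{MVI}: the boundary of $E(2R)$ is $(E\cap\Sigma(2R))\cup\partial E$ with $\partial E\subset\Sigma(R_0),$ and $\phi$ vanishes near both parts because $T_1-\delta_1>R_0$ and $T_2+\delta_2<2R,$ so no boundary terms appear and $\Delta u\geq\sigma u$ absorbs the $\sigma u^{2k}\phi^2$ term exactly as in the derivation of (\ref{m13}). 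Consequently (\ref{m14}) holds verbatim with every occurrence of $D(\cdot)$ replaced by $E(\cdot).$

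From here the remainder of the proof of Proposition \ref{MVI} transfers verbatim: iterating (\ref{m14}) over $k_i=\mu^i$ along the nested radii bounds the $L^\infty$ norm of $u$ on an annulus of $E$ by its $L^2$ norm over a slightly larger annulus, the interpolation step (\ref{m15}) turns this into an $L^\infty$--$L^1$ bound, and the telescoping choice of $R_{1,i},R_{2,i},\epsilon_{1,i},\epsilon_{2,i}$ gives
\[
\sup_{E\cap(D(R)\setminus D(R/2))}u\ \leq\ \frac{C(A,\mu)}{\mathrm{V}(2R)\,\theta^{2\nu}}\int_{E((1+\theta)R)\setminus E((1-\theta)R/2)}u,
\]
the supremum understood in the essential sense, so that by continuity of $u$ it also dominates $\sup_{\partial E(R)}u.$ Since $(1-\theta)\tfrac{R}{2}>\tfrac{R}{4}$ for $\theta<\tfrac{1}{8},$ the right-hand side is at most $\frac{C(A,\mu)}{\mathrm{V}(2R)\,\theta^{2\nu}}\int_{E((1+\theta)R)\setminus E(R/4)}u,$ which is the claimed estimate. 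The main --- and essentially only --- thing demanding genuine care is the bookkeeping that no cutoff ever reaches the topological boundary $\partial E$; this is exactly what the hypothesis $R\geq 4R_0$ is for, and with it in hand nothing new happens relative to Proposition \ref{MVI}.
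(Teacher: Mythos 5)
Your proposal is correct and follows the same route as the paper: the paper's proof simply replaces the cutoff in Proposition \ref{MVI} with the analogous cutoff supported in $E\cap\bigl(D(T_2+\delta_2)\setminus D(T_1-\delta_1)\bigr)$ and declares the rest of the iteration verbatim, which is exactly what you do. Your added bookkeeping --- that $\mathrm{V}(2R)$ comes from the normalization of $(\mathcal{S})$ on all of $D(2R)$ and therefore stays put, that all integrals over $D(2R)$ collapse to integrals over $E$ because $\phi$ vanishes off $E$, and that the support of $\phi$ stays clear of $\partial E$ and of $\Sigma(2R)$ so the integration by parts produces no boundary terms --- accurately makes explicit what the paper leaves as ``verbatim.''
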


\begin{proof}
In the proof of Proposition \ref{MVI} one may choose the cut-off $\phi$ with
support in the end $E$ as follows. 
\begin{equation*}
\phi( x) =\left\{ 
\begin{array}{llll}
1 & \text{on }E( T_{2}) \backslash D( T_{1}) &  &  \\ 
\frac{1}{\delta _{2}}( T_{2}+\delta _{2}-\rho( x)) & \text{on }E(
T_{2}+\delta _{2}) \backslash D( T_{2}) &  &  \\ 
\frac{1}{\delta _{1}}( \rho( x) -T_{1}+\delta _{1}) & \text{on }E( T_{1})
\backslash D( T_{1}-\delta _{1}) &  &  \\ 
0 & \text{otherwise.} &  & 
\end{array}
\right.
\end{equation*}%
with $\frac{R}{2}<T_{1}<T_{2}<\frac{3R}{2}$ and $0<\delta_{1},\ \delta _{2}<%
\frac{1}{4}R.$ The rest of the proof is verbatim.
\end{proof}

It is perhaps worth pointing out that the normalization in Corollary \ref%
{MVILoc} is by the volume of $D(2R),$ not of its intersection with $E.$ We
now apply this localized version to improve Corollary \ref{BF}.

For an end $E$ of $M,$ define 
\begin{equation}
\alpha _{E}=\limsup_{R\rightarrow \infty }\left( \frac{R^{2q}}{\mathrm{A}(
R) }\int_{\partial E\left( R\right) }\sigma ^{q}\right) ^{\frac{1}{q}}.
\label{ae}
\end{equation}

\begin{corollary}
\label{BFLoc} Assume that $\left( M,g\right)$ admits a proper function $\rho$
satisfying (\ref{L3}) and that the Sobolev inequality $(\mathcal{S})$ holds.
Suppose that $\sigma$ decays quadratically along $E.$ Then there exists $%
\Gamma( m, A, \nu, \alpha_E)>0$ such that 
\begin{equation*}
u\leq \Lambda \left( \rho ^{\Gamma}+1\right) \text{ \ on } E
\end{equation*}%
for any positive solution $u$ to $\Delta u=\sigma u$ on $E$, where $%
\Lambda>0 $ is a constant depending on $u$, provided that $\alpha_E<\infty$
for some $q>\nu -\frac{1}{2}.$ In the case $q=\nu -\frac{1}{2},$ the same
conclusion holds for some $\Gamma( m, A,\nu)>0$ when $\alpha_E\leq \alpha
_{0}( m, A, \nu)$, a sufficiently small positive constant.
\end{corollary}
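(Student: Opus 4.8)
The plan is to reproduce the proof of Corollary \ref{BF} --- first the preliminary estimate of Proposition \ref{G}, then the iteration of Theorem \ref{Growth} --- with every ingredient localized to the end $E$, the localized mean value inequality of Corollary \ref{MVILoc} taking over the role of the mean value property $(\mathcal{M})$. We regard $E$ as an unbounded component of $M\setminus D(R_0)$ and fix an inner radius $r_0\geq 4R_0$, large enough that the Cheng--Yau estimate of Lemma \ref{CY} applies along $\partial E(r_0)$, which is compactly contained in the interior of the region where $u$ solves the equation. For $r>r_0$ the set $E(r)\setminus E(r_0)=E\cap\left(D(r)\setminus D(r_0)\right)$ has boundary exactly $\partial E(r)\cup\partial E(r_0)$, with no contribution from $\bar E\cap\Sigma(R_0)$. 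Since $\alpha_E$ in (\ref{ae}) is normalized by the full area $\mathrm{A}(R)$ and Corollary \ref{MVILoc} by the full volume $\mathrm{V}(2R)$, the volume and area estimates of Lemma \ref{Area} remain applicable verbatim; this is what makes the localization essentially free of cost.

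First I would record the localized analogue of Lemma \ref{I}: integrating (\ref{k0}) with $w=u$ over $E(r)\setminus E(r_0)$ and using $\Delta u=\sigma u$ together with $\Delta\rho\leq\frac{4m}{\rho}|\nabla\rho|^2$ yields, for almost every $r\geq r_0$,
\[
\frac{d}{dr}\left(\frac{1}{r^{4m}}\int_{\partial E(r)}u\,|\nabla\rho|\right)\leq\frac{1}{r^{4m}}\int_{E(r)\setminus E(r_0)}\sigma u+\frac{1}{r^{4m}}\int_{\partial E(r_0)}\frac{\langle\nabla u,\nabla\rho\rangle}{|\nabla\rho|},
\]
the last term being bounded in absolute value by $C(r_0)\sup_{\partial E(r_0)}u$ via Lemma \ref{CY}. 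Feeding in the quadratic decay $\sigma\leq\Upsilon/\rho^2$ along $E$, normalizing $\sup_{(E\cap D(r_0))\setminus D(R_0)}u=1$, and setting $\omega_E(r)=\int_{(E\cap D(r))\setminus D(r_0)}u\,\rho^{-2}|\nabla\rho|^2$, the co-area formula turns this into the same second order inequality $r^2\omega_E''(r)-(4m-2)r\omega_E'(r)-4\Upsilon\omega_E(r)\leq C(r_0)$ obtained in Proposition \ref{G}; integrating it twice, bounding $\omega_E'(r_0)$ by $C(r_0)$, and then applying Corollary \ref{MVILoc} once more produces the a priori bound $u\leq\Lambda(\rho^C+1)$ on $E$ with $C=C(m,\Upsilon)$. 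This supplies the polynomial growth of $u$ along $E$ needed to launch the main iteration.

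Finally, with $\chi_E(r)=\int_{(E\cap D(r))\setminus D(r_0)}u\,\rho^{-4m}|\nabla\rho|^2$ in place of $\chi$ and $\alpha_E$ in place of $\alpha$, I would run the argument of Theorem \ref{Growth} word for word. By Hölder's inequality one may assume $\varepsilon=(2q+1-2\nu)/q\in[0,\tfrac12)$. The co-area formula, Hölder's inequality, the estimate $\int_{\partial E(t)}\sigma^q/|\nabla\rho|\leq c(m)\,\alpha_E^q\,t^{-2q-1}\mathrm{V}(t)$ coming from (\ref{ae}) and Lemma \ref{Area}, and the localized mean value bound
\[
\sup_{\partial E(t)}u\leq\frac{c(m)\,C(A,\mu)}{\theta^{2\nu}}\,\frac{t^{4m}}{\mathrm{V}(t)}\,\chi_E((1+\theta)t)
\]
obtained from Corollary \ref{MVILoc} together with volume doubling, produce exactly the differential inequality (\ref{id1}) for $\chi_E$, with $\bar\alpha,\widetilde\alpha$ now formed from $\alpha_E$. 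The induction on $k$ controlling $\chi_E'(r)$ is then identical to the one in the proof of Theorem \ref{Growth}, and gives $u\leq\Lambda\rho^{\Gamma}$ on $E\setminus D(r_0)$ with $\Gamma=\Gamma(m,A,\nu,\alpha_E)$ when $q>\nu-\tfrac12$, and with $\Gamma=\Gamma(m,A,\nu)$ provided $\alpha_E\leq\alpha_0(m,A,\nu)$ in the critical case $q=\nu-\tfrac12$; combining this with the a priori bound on $E\cap D(r_0)$ gives the asserted estimate on all of $E$.

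I expect the only genuinely delicate point to be the soft bookkeeping behind the localized identity of the second paragraph: that the boundary of $E(r)\setminus E(r_0)$ is as claimed, that $\partial E(r)$ is smooth for almost every $r$ (by Sard) so that (\ref{k0}) and the co-area formula are legitimate, and that $\partial E(r_0)$ sits safely inside the interior of the domain of $u$ so that the interior gradient estimate is available there. None of this is substantive, but it is precisely where an unwise choice of $r_0$, or a failure to keep $E(r)\setminus E(r_0)$ away from $\Sigma(R_0)$, would break the argument; everything downstream is then verbatim from Proposition \ref{G} and Theorem \ref{Growth}.
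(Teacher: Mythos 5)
Your proposal is correct and follows the paper's own proof essentially verbatim: localize Lemma \ref{I} and the Proposition \ref{G} argument to the end $E$ to obtain a priori polynomial growth of $u$ along $E$, then rerun the iteration of Theorem \ref{Growth} with $\chi_E$, $\alpha_E$, and the localized mean value inequality of Corollary \ref{MVILoc} in place of $\chi$, $\alpha$, and $(\mathcal{M})$. The only cosmetic deviation --- basing $\chi_E$ at $r_0$ rather than $R_0$ --- is harmless, since Corollary \ref{MVILoc} integrates over $E\big((1+\theta)R\big)\setminus E\big(\tfrac{R}{4}\big)\subset E\big((1+\theta)R\big)\setminus E(r_0)$ once $R\geq 4r_0$.
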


\begin{proof}
First, Lemma \ref{I} can be localized to the end $E$ to yield 
\begin{equation*}
\frac{d}{dr}\left( \frac{1}{r^{4m}}\int_{\partial E\left( r\right)
}u\left\vert \nabla \rho \right\vert \right) \leq \frac{1}{r^{4m}}%
\int_{E\left( r\right) \backslash E\left( r_{0}\right) }\sigma u+\frac{1%
}{r^{4m}}\int_{\partial E\left( r_{0}\right) }\frac{\left\langle \nabla
u,\nabla \rho \right\rangle }{\left\vert \nabla \rho \right\vert }
\end{equation*}%
for any $r_{0}\geq R_{0}.$ Using the fact that $\sigma $ decays
quadratically along $E$, one concludes that $u$ is of polynomial growth
along $E$ by adopting the same argument as in Proposition \ref{G}.

Recall by Corollary \ref{MVILoc} that 
\begin{equation}
\sup_{\partial E\left( R\right) }u\leq \frac{C( A,\mu) }{\theta ^{2\nu }}%
\frac{1}{\mathrm{V}( 2R) }\int_{E\left( \left( 1+\theta \right) R\right)
\backslash E\left( R_{0}\right) }u  \label{Mloc}
\end{equation}%
for $R>4R_{0}$ and $0<\theta \leq 1.$ Following the proof of (\ref{id}) we
obtain that the function 
\begin{equation*}
\chi _{E}( r) =\int_{E\left( r\right) \backslash E\left( R_{0}\right) }u
\frac{\left\vert \nabla \rho \right\vert ^{2}}{\rho ^{4m}}
\end{equation*}%
satisfies the following inequality: 
\begin{equation*}
r^{4m}\chi _{E}^{\prime \prime }( r) \leq \frac{C_{0}\bar{\alpha}_{E}}{%
\theta ^{\frac{2\nu }{q}}}\int_{r_{0}}^{r}\chi _{E}(( 1+\theta) t)^{\frac{1}{%
q}}( \chi _{E}^{\prime }( t))^{1-\frac{1}{q}}t^{4m-2-\frac{1}{q}}dt+\Lambda
_{0}
\end{equation*}%
for $r\geq r_{0}$ and $0<\theta \leq 1,$ where 
\begin{equation*}
\Lambda _{0}=\int_{\partial E\left( r_{0}\right) }\left( u+\left\vert
\nabla u\right\vert \right)
\end{equation*}%
and $\bar{\alpha}_{E}=\min \left\{ \alpha _{E},1\right\},$ with the constant 
$C_{0}$ depending only on $m, A, \mu $ and $\alpha _{E}.$

Using an induction argument as in Theorem \ref{Growth}, we arrive at 
\begin{equation*}
\int_{\partial E\left( r\right) }u\leq \Lambda r^{C( m,A,\mu, \alpha
_{E}) }
\end{equation*}%
for $r\geq r_{0}.$ Integrating in $r$ and using (\ref{Mloc}), we conclude 
\begin{equation*}
u\leq \widetilde{\Lambda }\left( \rho ^{\Gamma _{\epsilon}}+1\right)
\end{equation*}
on end $E.$ This proves the result.
\end{proof}

Corresponding to an end $E,$ let $u_{E}$ be the positive solution of $\Delta
u_E=\sigma u_{E}$ on $M$ constructed in Theorem \ref{E}. Then $0<u_{E}\leq 1$
on $M\setminus E$. In particular, under the assumptions of Corollary \ref%
{BFLoc}, $u_E$ must be of polynomial growth on $M$ with the given growth
order.

\vspace{0.2 in}

\textbf{Acknowledgment}. The first author was partially supported by NSF
grant DMS-1506220 and by a Leverhulme Trust Visiting Professorship
VP2-2018-029. The second author was supported by a Leverhulme Trust Research
Project Grant RPG-2016-174.

\end{document}